\newcommand{\re}[1]{\pm(#1)}
\newtheorem{theorem}{Theorem}[section]
\newtheorem{claim}[theorem]{Claim}
\newtheorem{lemma}[theorem]{Lemma}
\newtheorem{conjecture}[theorem]{Conjecture}
\newtheorem{proposition}[theorem]{Proposition}
\newtheorem{obs}[theorem]{Observation}
\theoremstyle{definition}
\newtheorem{definition}[theorem]{Definition}
\theoremstyle{remark}
\newcommand{\dist}{\operatorname{dist}}
\newcommand{\LL}{\mathcal{L}}
\newcommand{\NL}{\overline{\mathcal{L}}}
\newenvironment{myproof}{\noindent{\em Proof.}}{\hfill$\triangle$  \vspace{0.2cm}}
\newcommand{\im}{\operatorname{Im}}
\newcommand{\dom}{\operatorname{Dom}}
\newcommand{\ksf}{$\mathcal K_{2,s}$-free }
\newcommand{\ksff}{$\mathcal K_{2,s}$-free}
\title{Antidirected trees in dense digraphs}
\author{Maya Stein\thanks{Department of Mathematical Engineering and Center for Mathematical Modeling (CNRS IRL2807), University of Chile. Supported by FONDECYT Regular Grant 1221905,  by ANID Basal Grant CMM FB210005,  and by MSCA-RISE-2020-101007705 project {\it RandNET}. {\tt mstein@dim.uchile.cl}
} \and
	Ana Trujillo-Negrete\thanks{Center for Mathematical Modeling (CNRS IRL 2807), University of Chile, Santiago, Chile. Supported by ANID/Fondecyt Postdoctorado 3220838, and by ANID Basal Grant CMM FB210005. {\tt ltrujillo@dim.uchile.cl} }}
\date{ }
\begin{document}
	\maketitle

	\begin{abstract}
	We show that if $D$ is an $n$-vertex digraph  with  more than $(k-1)n$ arcs that does not contain any of  three forbidden digraphs, then $D$ contains every antidirected tree
	on $k$ arcs. The forbidden digraphs are 
	those orientations of $K_{2, \lceil k/12\rceil}$ 
	where each of the vertices in the class of size two has either out-degree $0$ or in-degree $0$.
	This proves a conjecture of Addario-Berry et al.~for a broad class of digraphs, and  generalises a result for $K_{2, \lfloor k/12\rfloor}$-free  graphs by Balasubramanian and Dobson. 
	
	We also show that every  digraph $D$   on $n$ vertices with more than $(k-1)n$ arcs  contains every antidirected $k$-arc caterpillar, thus solving   the above conjecture for caterpillars. This generalises a result of Perles.
\end{abstract}

\textbf{Key words:} Antidirected tree, caterpillar, digraph.
 
\textbf{MSC codes:} 05C05, 05C20, 05C35.

%%%%%%%%%%%%%%%%%%%%%%%
%%%%%%%%%%%%%%%%%%%%%%%
%%%%%%%%%%%%%%%%%%%%%%%
%%%%%%%%%%%%%%%%%%%%%%%

\section{Introduction}

In 1963, Erd\H os and S\'os conjectured that every graph on $n$ vertices and with more than $(k-1)n/2$ edges contains, as a subgraph, each tree with $k$ edges~\cite{Erdos64}. This intriguing conjecture is still open in general, although it is known to hold for certain classes of host graphs characterised by forbidden subgraphs. The most general result in this direction is due to Balasubramanian and Dobson~\cite{Dob01} who showed that the Erd\H os-S\'os conjecture holds for $K_{2, \lfloor k/12\rfloor}$-free graphs. See~\cite{BPS3, rozhon}  for some more recent progress on the conjecture, and~\cite{survey} for a survey. 

Soon after the Erd\H os-S\'os conjecture appeared, the search for a corresponding statement for oriented trees in digraphs began. It quickly became evident that such a statement can only hold 
for 
{\it antidirected trees}, that is, oriented trees without directed paths of length two. To see this, take
a bipartite graph on sets~$A$ and~$B$, and orient every edge  from $A$ to $B$. This digraph has $|A||B|$ edges, and all its subgraphs are antidirected. 

According to~\cite{graham}, Erd\H os conjectured that  for every antidirected tree $T$ there is a constant $c_T$ such that every sufficiently large directed graph~$D$ on $n$ vertices and with at least $c_Tn$ arcs contains~$T$. This was confirmed by Graham~\cite{graham} in 
 1970. 
In 1982, Burr~\cite{burr} showed that one can take $c_T=4|E(T)|$ in Graham's result, namely, he showed that  every $n$-vertex digraph~$D$ with more than $4kn$ arcs contains each antidirected tree $T$ on $k$ arcs, although he did not believe the number $4kn $ to be optimal. Burr also provided an example where $(k-1)n$ arcs are not sufficient: Take the $k$-edge star with all edges directed outwards as the antidirected tree $T$, and let the host be the complete bipartite graph $K_{2k-2,2k-2}$ with half of the edges  at each vertex oriented in either direction. 

In 2013, 
Addario-Berry
et al.~\cite{ahlrt} formulated a conjecture which would imply that the digraph from Burr's example is extremal for containment of antidirected trees.

\begin{conjecture}[Addario-Berry, Havet, Linhares Sales, Reed and Thomassé 2013~\cite{ahlrt}]
	\label{conj:antidir}\label{antitree}\label{ESantitrees} 
 Every  digraph $D$ with more than $(k-1)|V(D)|$ arcs contains each  antidirected  tree with $k$ arcs. \samepage 
\end{conjecture}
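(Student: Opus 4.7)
The plan is to proceed by induction on $k$, exploiting the bipartite structure of antidirected trees: every non-leaf vertex of such a tree $T$ is either a \emph{source} (all incident arcs oriented outwards) or a \emph{sink} (all incident arcs oriented inwards), so $T$ admits a canonical bipartition $V(T)=S\cup S'$. Any embedding of $T$ into $D$ must send vertices of $S$ to vertices of sufficiently large out-neighborhood and vertices of $S'$ to vertices of sufficiently large in-neighborhood. The base case $k=1$ is immediate, since any arc of $D$ realises the one-arc antidirected tree.

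For the inductive step, I would distinguish two regimes. In the \emph{dense regime}, where every vertex of $D$ has both in-degree and out-degree above some threshold of order $k/2$, one attempts a greedy embedding: fix a BFS order of $T$ rooted at a carefully chosen vertex, embed the vertices of $T$ one at a time, and at each step use the semi-degree bound to find a fresh candidate image in the appropriate (in- or out-) neighborhood of the previously embedded parent. In the \emph{sparse regime}, there exists a vertex $v$ whose out- or in-degree is small; one deletes $v$ and verifies that the remaining digraph still satisfies the hypothesis, possibly after trimming a matching leaf or pendant structure of $T$, so that the induction can proceed. Picking the right pendant configuration of $T$ to remove — something like a leaf together with its parent when convenient, or a pendant alternating path of length two — is crucial so that the arc count $(k-1)|V(D)|$ drops by just the right amount.

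The principal obstacle lies in making these two regimes mesh, and in particular in handling near-extremal instances. A naive greedy embedding will repeatedly fail because previously chosen images block the required neighborhoods, so a rerouting or swapping technique will be needed to repair partial embeddings. More seriously, one must recognise the configurations where the bound $(k-1)|V(D)|$ is essentially tight, namely orientations of $K_{k-2,k-2}$ as in Burr's example. I would therefore expect a stability-type dichotomy: either the greedy/inductive approach succeeds, or $D$ lies close to an oriented complete bipartite graph and must be treated by an independent structural argument. The paper's hypothesis forbidding certain orientations of $K_{2,\lceil k/12\rceil}$ is precisely tailored to rule out this bipartite-type obstruction, and it is the passage from that restricted class to arbitrary digraphs that I expect to be the main difficulty, and which leaves the full conjecture open.
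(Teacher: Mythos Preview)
The statement you are addressing is Conjecture~\ref{ESantitrees}, which is \emph{open}: the paper does not prove it, and explicitly presents it as a conjecture of Addario-Berry, Havet, Linhares Sales, Reed and Thomass\'e. There is therefore no ``paper's own proof'' to compare your proposal against. What the paper does prove are two restricted cases: Theorem~\ref{thm:main} (the conjecture holds when the host digraph is $\mathcal K_{2,\lceil k/12\rceil}$-free) and Proposition~\ref{prop:dirEScat} (the conjecture holds when $T$ is a caterpillar). You yourself acknowledge in your final paragraph that the full conjecture remains open, so your proposal is better read as a strategy sketch than as a proof.

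As a sketch, several of the steps do not go through. In your ``sparse regime'', deleting a vertex $v$ of small out-degree need not preserve the hypothesis: $v$ may still have in-degree close to $n-1$, so removing it can destroy far more than $k-1$ arcs, and the induction breaks. The paper circumvents this by deleting \emph{arcs}, not vertices, and working with the notion of minimum pseudo-semidegree (Lemma~\ref{lempseu} and the more refined Lemma~\ref{cor:subdigraph}); this is a genuine subtlety with no graph analogue. In your ``dense regime'', a greedy BFS embedding with semidegree of order $k/2$ cannot succeed unaided, because $T$ may have a vertex of total degree up to $k$; the paper handles this via a delicate case analysis on $\Delta(T)$ and $\Delta_2(T)$, together with a swapping/rerouting argument that crucially uses the $\mathcal K_{2,s}$-free hypothesis (Lemma~\ref{lemma:k/4neighbors}). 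Your intuition that a stability dichotomy around oriented complete bipartite graphs should govern the extremal behaviour is reasonable, but no such dichotomy is currently known, and that is precisely why the conjecture is open.
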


 As the authors of~\cite{ahlrt}  observe, their conjecture implies the Erd\H os-S\'os conjecture, as any graph $G$ corresponds to a digraph having both arcs for each edge of $G$. 
 
Not much is known about Conjecture~\ref{ESantitrees}. In \cite{ahlrt}, it is shown to hold for all antidirected trees of diameter at most~3. Also, if
the digraph $D$ from  Conjecture~\ref{ESantitrees} is an {oriented}  graph, then 
 about $\frac 32 (k-1)n$ arcs are sufficient to guarantee all (of the up to two) antidirected paths with $k$ arcs appear as  a subgraph of $D$~\cite{antipaths}. This bound on the number of arcs has been improved very recently  to  about $\frac 43 (k-1)n$~\cite{chen2024}.
 Finally, Conjecture~\ref{ESantitrees} is asymptotically true in all large oriented graphs  $D$ for all large balanced antidirected trees of bounded maximum degree~\cite{camila}. 

\begin{figure}[t]
	\centering
	\includegraphics[width=0.78\textwidth]{./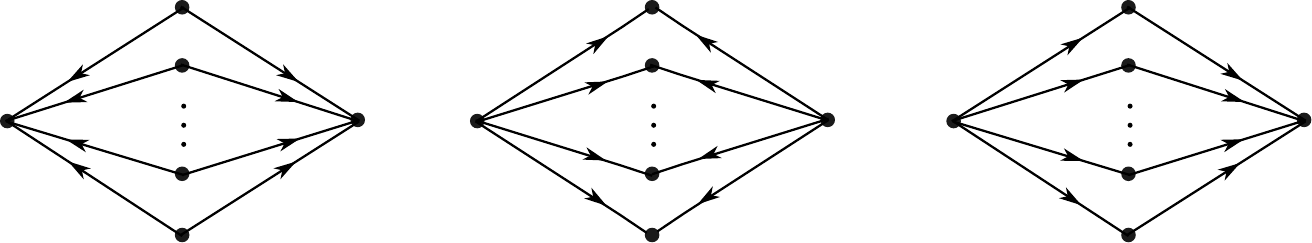}
	\caption{The three forbidden digraphs.}
	\label{fig:K_{2,s}}
\end{figure}

We show that Conjecture~\ref{ESantitrees} holds for a broad class of host digraphs, namely, for all digraphs that do not contain one of three fixed types of orientations of $K_{2, s}$, where $s=\lceil k/12\rceil$. 
These three orientations are depicted in Figure~\ref{fig:K_{2,s}}. They are the three orientations of $K_{2,s}$ that can be obtained by identifying the leaves of two antidirected stars having $s$ arcs each. 
We  say a digraph $D$ is {\it \ksf} if it does not contain any of  these three orientations of $K_{2,s}$.
With this definition at hand, our main result reads as follows.

\begin{theorem}
	\label{thm:main}
	For any $k\in\mathbb N$ and for $s=\lceil k/12\rceil$, every \ksf digraph $D$ with $n$ vertices and more than $(k-1)n$ arcs contains every antidirected tree
	with $k$ arcs. 
\end{theorem}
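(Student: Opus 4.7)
The plan is to use a minimum counterexample argument together with a standard cleaning reduction. Let $(D,T)$ be a counterexample to the theorem with $|V(D)|=n$ minimum. So long as $D$ contains a vertex $v$ with $d^+(v)+d^-(v)\le k-1$, I would delete it: this removes at most $k-1$ arcs and so preserves the strict inequality $|E(D)|>(k-1)|V(D)|$; it also preserves $\mathcal K_{2,s}$-freeness and non-containment of $T$. By the minimality of $n$, no such vertex exists, so every vertex of $D$ satisfies $d^+(v)+d^-(v)\ge k$, and hence $V(D)=V^+\cup V^-$ where $V^+=\{v:d^+(v)\ge \lceil k/2\rceil\}$ and $V^-=\{v:d^-(v)\ge \lceil k/2\rceil\}$.

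Let $(A,B)$ be the bipartition of $T$ with $A$ the sources and $B$ the sinks, so that every arc of $T$ runs from $A$ to $B$. The strategy is to build an embedding $\varphi\colon V(T)\to V(D)$ sending $A$ into $V^+$ and $B$ into $V^-$, extending $\varphi$ in BFS order from a carefully chosen root (a centroid of $T$, say). When extending from a parent $x$ to a child $u$, the candidate set for $\varphi(u)$ is $N^+(\varphi(x))\cap V^-$ if $u\in B$, and $N^-(\varphi(x))\cap V^+$ if $u\in A$. By the cleaning step, this candidate set has size at least $\lceil k/2\rceil$ before removing already-used or otherwise invalidated vertices.

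The $\mathcal K_{2,s}$-free hypothesis is what ensures that the candidate sets are not depleted by the partial embedding. The three forbidden orientations translate, respectively, into the statements that any two vertices of $D$ share fewer than $s$ common out-neighbours, fewer than $s$ common in-neighbours, and are joined by fewer than $s$ directed paths of length two. The first two bound how two vertices on the same side of the tree compete for candidates; the third controls the interaction between sources and sinks across an arc of $T$. A double counting between these forbidden $K_{2,s}$ patterns and the partial embedding then shows that the number of candidates ``poisoned'' by the already-embedded portion grows slowly enough to stay below $\lceil k/2\rceil$, provided that $s\le \lceil k/12\rceil$.

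The main obstacle, as in the undirected argument of Balasubramanian and Dobson, will be coordinating the embedding at high-degree vertices of $T$: if a sink $v\in B$ has many source children, then the whole subtree rooted at $v$ has to be secured within $N^-(\varphi(v))\cap V^+$, and each child subtree then needs its own rich out-neighbourhood in $V^-$. Making the three inequalities above balance simultaneously at such a vertex, without losing more than a factor of $12$, is what pins down the constant $s=\lceil k/12\rceil$ and where I expect the technical heart of the argument to lie. I would handle it with an alternating BFS/DFS embedding: within each ``star'' of $T$ I would choose images greedily from the largest available candidate set, and between stars I would maintain a reservoir of vertices whose in- and out-neighbourhoods remain rich, using the three $\mathcal K_{2,s}$-free inequalities to bound the rate at which this reservoir can shrink.
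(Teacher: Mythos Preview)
Your cleaning step and the subsequent claim about candidate sets contain a genuine error. After deleting vertices of total degree at most $k-1$, you correctly conclude that every surviving vertex has $d^+(v)+d^-(v)\ge k$, and hence lies in $V^+\cup V^-$. But it does \emph{not} follow that $|N^+(\varphi(x))\cap V^-|\ge\lceil k/2\rceil$ when $\varphi(x)\in V^+$. All you know is $|N^+(\varphi(x))|\ge\lceil k/2\rceil$; those out-neighbours may all lie in $V^+\setminus V^-$, and then your candidate set for a non-leaf sink child is empty. The paper explicitly flags this obstacle in its overview: in digraphs one cannot pass from a density hypothesis to a minimum-semidegree subdigraph by vertex deletion, and even the weaker pseudo-semidegree subdigraph of Lemma~\ref{lempseu} does not by itself supply a vertex of the correct high out-degree (or in-degree) for the tree's maximum-degree vertex. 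Resolving this is the content of the paper's Lemma~\ref{cor:subdigraph}, which produces a subdigraph $D'$ satisfying one of two carefully balanced sets of degree conditions, with the choice of the parameter $r$ tuned to the specific tree $T$.

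Beyond this, your proposal is really a plan rather than a proof: you identify where the difficulty lies (``making the three inequalities balance \ldots\ is where I expect the technical heart of the argument to lie'') but do not carry it out. The paper's actual argument is quite different from a single greedy BFS embedding. It splits on whether the second-largest degree $\Delta_2(T)$ exceeds roughly $k/4$. When $\Delta_2$ is large, the two high-degree vertices together with their neighbours and the path between them form a ``double broom'' caterpillar $B_{uv}$ containing most of $T$; this caterpillar is embedded first via a separate Perles-style convex-drawing argument (Proposition~\ref{prop:catmindeg}), and only then is the remainder of $T$ attached using the $\mathcal K_{2,s}$-freeness. When $\Delta_2$ is small, the proof instead uses a maximal-subtree/swap argument inside a pseudo-semidegree subdigraph. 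In both regimes the $\mathcal K_{2,s}$-freeness is invoked through the three-vertex inequality of Lemma~\ref{lemma:k/4neighbors}, not through a running global budget as you suggest. None of these ingredients---the subdigraph lemma, the double-broom decomposition, or the caterpillar embedding---appear in your outline, and the approach you sketch does not obviously lead to them.
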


Note that Theorem~\ref{thm:main} is a natural digraph analogue of the resolution of the Erd\H os-S\'os conjecture for $K_{2, \lfloor k/12\rfloor}$-free graphs  from~\cite{Dob01} we mentioned at the beginning of the introduction. In fact, it is easy to see that  Theorem~\ref{thm:main} implies the result from~\cite{Dob01}.

We also prove  Conjecture~\ref{ESantitrees} for a special class of antidirected trees. A  {\it caterpillar} is a tree that consists of a  path and leaves attached to it.

\begin{proposition}
\label{prop:dirEScat}
	Every  digraph $D$ on $n$ vertices with more than $(k-1)n$ arcs  contains every antidirected caterpillar 
	with $k$ arcs. 
\end{proposition}

While Proposition~\ref{prop:dirEScat} is interesting on its own, it is also relevant for Theorem~\ref{thm:main}. Indeed, our proof of Theorem~\ref{thm:main} relies on
a variant of Proposition~\ref{prop:dirEScat}, namely, Proposition~\ref{prop:catmindeg} (which will be stated in Section~\ref{sec:cat}).
The proofs of both propositions  use the principal idea of an earlier proof of the Erd\H{o}s-Sós conjecture for caterpillars, due to Perles. His result was first mentioned in~\cite{moser}, without a proof. 
\begin{proposition}[Perles (see~\cite{moser})]\label{prop:EScat}
	Every graph $G$ on $n$ vertices with more than $(k-1)n/2$ edges  contains every caterpillar 
	with $k$ edges. 
\end{proposition}

We note that Proposition~\ref{prop:EScat} follows directly from Proposition~\ref{prop:dirEScat}, where the digraph~$D$ is obtained from the graph $G$ by replacing each edge  with   arcs in both directions.

Our paper is organised as follows. In Section~\ref{sec:overview}, we  state  Lemmas~\ref{lemma:onepointfive} and~\ref{lemma:third}, which together immediately imply Theorem~\ref{thm:main}. We also use  Section~\ref{sec:overview} to give an overview of the proofs of the two lemmas and Propositions~\ref{prop:dirEScat} 
 and~\ref{prop:catmindeg}.
 After introducing some notation in Section~\ref{sec:not}, we formally prove 
 the two propositions
  in Section~\ref{sec:cat}. 
  In particular, the proof of Proposition~\ref{prop:dirEScat} 
   can be found in Section~\ref{sec:dirEScat}, 
   and this section is entirely independent of the rest of the the paper. 
The only result from Section~\ref{sec:cat} that is  used in the rest of the paper
is Proposition~\ref{prop:catmindeg}, which  is shown  in Section~\ref{sec:minEScat}.

  The remainder of the paper is dedicated to the proof of Lemmas~\ref{lemma:onepointfive} and~\ref{lemma:third}. 
Section~\ref{sec:selection-subdigraph} is devoted to selecting a  subdigraph of the host digraph which obeys certain degree conditions, making it suitable to accommodate the antidirected tree. The results of Section~\ref{sec:selection-subdigraph}  will be useful in both 
Sections~\ref{sec:Delta2-small}~and~\ref{sec:lemma-third}, which contain the rest of the proofs of Lemmas~\ref{lemma:onepointfive} and~\ref{lemma:third}, respectively.  

%%%%%%%%%%%%%%%%%%%%%%%
%%%%%%%%%%%%%%%%%%%%%%%
%%%%%%%%%%%%%%%%%%%%%%%
%%%%%%%%%%%%%%%%%%%%%%%

\section{Overview}\label{sec:overview}
\subsection{Sketch of the proof of Propositions~\ref{prop:dirEScat} 
 and~\ref{prop:catmindeg}}

The idea of the proofs of  both propositions 
can be resumed as follows. We consider a drawing of the host 
 digraph in the plane, with all vertices on the unit circle, and arcs 
   being straight lines. (Arcs  
    are allowed to cross.) We inductively embed the caterpillar $T$, starting with an arc adjacent to one extreme of the spine of $T$, and then at each step adding a star, until we have embedded all of $T$. 

To be more precise, we consider a growing sequence $T_1, T_2, \ldots , T_r=T$ of subtrees of $T$, as in the previous paragraph.
The first subtree $T_1$ is an edge, and it could be embedded into any edge of the host. We then show inductively that for each of the subtrees $T_i$ of $T$ there are many   arcs~$e$ of the host digraph such that there is an embedding of $T_i$ into the host digraph, with $e_i$ embedded into $e$, and such that the embedding  has no crossing  arcs. The number of suitable  arcs will decrease while $i$ grows. However, one can show that in the last step, when $i=r$, there is still at least one   arc left, and therefore $T$ can be correctly embedded into the host.

\subsection{The proof of Theorem~\ref{thm:main}}

The proof of Theorem~\ref{thm:main} follows directly from the next two results, where $\Delta_2(T)$ is the value of the second highest total degree of a vertex of $T$. 
\begin{lemma}
	\label{lemma:onepointfive}
	Let $T$ be a $k$-arc antidirected tree, and let $D$ be a  \ksf digraph with $n$ vertices and more than $(k-1)n$ arcs, where $s=\lceil k/12\rceil$.
	If  $\Delta_2 (T)\le \lfloor k/4\rfloor +2$, 
	then $T$ embeds in $D$. 
\end{lemma}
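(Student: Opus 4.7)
The plan is to combine a degree-cleaning preprocessing step with an embedding strategy that exploits the structural constraint imposed by $\Delta_2(T)\le\lfloor k/4\rfloor+2$. First, I would invoke the subdigraph selection results announced for Section~\ref{sec:selection-subdigraph} to pass from $D$ to a subdigraph $D^*$ whose vertices all have large in- or out-degree, still retaining enough average density to locate a vertex of very large degree of the correct type. The hypothesis on $\Delta_2(T)$ says that at most one vertex $v^*$ of $T$ can have degree exceeding $\lfloor k/4\rfloor+2$; if no such $v^*$ exists, $T$ has small maximum degree and can be embedded directly by a Proposition~\ref{prop:catmindeg}-type argument, so I focus on the case in which $v^*$ is present. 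Since $T$ is antidirected, $v^*$ is either a source or a sink; deleting it splits $T$ into subtrees $T_1,\dots,T_d$ with $d=d_T(v^*)$, each rooted at a neighbor $r_i$ of $v^*$, each of maximum degree at most $\lfloor k/4\rfloor+2$, and together containing $k-d$ arcs.

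Next, I would choose a candidate $w^*\in V(D^*)$ for the image of $v^*$ whose oriented neighborhood (out- or in-, matching the type of $v^*$) is both large and abundant in vertices of large opposite-type degree. The subtrees $T_1,\dots,T_d$ are then embedded sequentially into $D^*-w^*$ so that each root $r_i$ is mapped to a still-available vertex $y_i$ in the chosen neighborhood of $w^*$; once $y_i$ is fixed, $T_i$ is built up by iterated extension, which is feasible because each vertex of $T_i$ has degree at most $\lfloor k/4\rfloor+2$ and the inherited semi-degrees of $D^*$ comfortably exceed this budget. Roughly speaking, this is the antidirected version of the minimum-degree caterpillar argument from Proposition~\ref{prop:catmindeg}, applied in parallel to each $T_i$.

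The main obstacle, where the \ksf hypothesis does its work, is to control the interference between successive subtree embeddings: after placing $T_1,\dots,T_{i-1}$, we must still find unused images $y_i$ inside the appropriate neighborhood of $w^*$ and then extend inside $T_i$ without colliding with earlier images. If too many candidates are blocked simultaneously, we must expose another vertex $w'$ that, together with $w^*$, sends or receives arcs to a common set of $s=\lceil k/12\rceil$ vertices in one of the three sign-patterns ruled out by the hypothesis; these three patterns correspond exactly to whether the two twin vertices play the role of source, sink, or one of each in the antidirected picture, matching the possible roles of $v^*$ and of any obstructing vertex of $T_i$. The particular constants $\lceil k/12\rceil$ and $\lfloor k/4\rfloor+2$ should fall out of tightening the counting of how many candidates can be simultaneously blocked during the process, balanced against the total arc budget $(k-1)n$.
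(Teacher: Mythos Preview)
Your high-level split into ``$\Delta(T)$ small'' versus ``one vertex $v^*$ of large degree'' matches the paper's division into Lemmas~\ref{lemma:first} and~\ref{lemma:second}. But both branches of your proposal have genuine gaps.

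For the small-degree branch you say $T$ ``can be embedded directly by a Proposition~\ref{prop:catmindeg}-type argument''. Proposition~\ref{prop:catmindeg} is a statement about \emph{caterpillars}: its proof relies on the spine giving a linear order in which to grow the embedding, counting good arcs via the convex drawing. An arbitrary antidirected tree with $\Delta(T)\le\lfloor k/4\rfloor$ has no such structure, and I do not see how to adapt the convex-drawing method. The paper's proof of this case (Lemma~\ref{lemma:first}) is not a caterpillar argument at all: it takes a maximal partial embedding, finds a vertex $w$ whose image has its whole out-neighbourhood used, swaps in a carefully chosen $y$, tries to extend again, and if a second obstruction $z$ appears, uses Lemma~\ref{lemma:k/4neighbors} (three vertices cannot jointly have $5k/4$ neighbours in a $k$-set) to derive a contradiction. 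None of this mechanism is present in your sketch.

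For the large-degree branch you assert that once $v^*$ is placed, the subtrees $T_i$ can be grown by ``iterated extension'' because each vertex has degree at most $\lfloor k/4\rfloor+2$ and ``the inherited semi-degrees of $D^*$ comfortably exceed this budget''. But the semi-degree bound is only $\lceil k/2\rceil$, and by the time you are halfway through the embedding you may have used $k/2$ or more vertices of $D^*$; a vertex of semi-degree $\lceil k/2\rceil$ can then have \emph{all} its neighbours occupied, and the budget $\lfloor k/4\rfloor+2$ is irrelevant. Your paragraph on collisions says only that if too many candidates are blocked ``we must expose another vertex $w'$'' forming a forbidden $\mathcal K_{2,s}$; but you do not explain how such a $w'$ is produced or why the counting closes. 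The paper handles this branch (Lemma~\ref{lem:case3b}) by first embedding the radius-$2$ ball around $v^*$ (Lemma~\ref{lem:pu}), then growing outward one star at a time; at each failure it identifies \emph{three} specific vertices ($a=f(u)$, $b_1=f(w)$, and either $f(p_w)$ or some $b\in B$) whose combined degrees into the image violate Lemma~\ref{lemma:k/4neighbors}, with a delicate re-embedding argument to make the numbers work. That argument is the heart of the proof, and it is missing from your proposal.
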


\begin{lemma}
	\label{lemma:third}
	Let $T$ be  a $k$-arc antidirected tree, and let $D$ be a \ksf digraph with $n$ vertices and more than $(k-1)n$ arcs, where $s=\lceil k/12\rceil$.
	If $\Delta_2(T)\ge \lfloor k/4\rfloor +3$, then $T$ embeds in $D$.  
\end{lemma}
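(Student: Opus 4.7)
The plan is to fix two vertices $v_1, v_2 \in V(T)$ with total degrees $d_1 \ge d_2 \ge \lfloor k/4\rfloor + 3$. Because $T$ is antidirected, every vertex of $T$ is either a \emph{source} (all incident $T$-arcs outgoing) or a \emph{sink}, so the pair $(v_1,v_2)$ has one of three type-combinations up to symmetry, corresponding bijectively to the three forbidden $K_{2,s}$ orientations of Figure~\ref{fig:K_{2,s}}. The broad idea is to embed $v_1,v_2$ into a carefully chosen pair of vertices of $D$ whose role-appropriate neighborhoods are large and have small pairwise intersection, use this to accommodate the two large stars at $v_1, v_2$, and attach the comparatively small remainder of $T$ greedily.

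First, I would apply the results of Section~\ref{sec:selection-subdigraph} to pass from $D$ to a subdigraph $D'\subseteq D$ that retains the \ksf property and satisfies a useful minimum-degree condition at every vertex (of the form ``either $d^+(v)$ or $d^-(v)$ is of the right order''), while still enjoying a good arc count. This preprocessing is what lets us extend partial embeddings of antidirected subtrees in a greedy fashion at the end.

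Next, in $D'$, I would search for a pair $(u_1,u_2)$ matching the types of $(v_1,v_2)$, in the sense that $u_i$ has at least $d_i$ out-neighbors if $v_i$ is a source and at least $d_i$ in-neighbors if $v_i$ is a sink. Writing $N_i$ for the corresponding role-neighborhood of $u_i$ in $D'$, a double-counting argument shows that if no admissible pair satisfied $|N_1 \cap N_2| < s$, one could exhibit within $D'$ one of the three forbidden orientations of $K_{2,s}$ — the orientation whose type is determined precisely by the types of $v_1, v_2$. Hence an admissible pair with $|N_1\cap N_2| < s = \lceil k/12\rceil$ exists. Set $\vp{v_i} = u_i$. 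Since $d_1 + d_2 \le k+1$ and the overlap of $N_1,N_2$ is strictly less than $s$, there is plenty of room to embed the $d_i$ $T$-neighbors of $v_i$ into $N_i$ disjointly for $i=1,2$. The unique $v_1$–$v_2$ path in $T$ is handled as part of this embedding of the two big stars. The remaining unembedded part of $T$ then consists of small antidirected subtrees dangling off already-embedded vertices, and these can be embedded one at a time, either by invoking Proposition~\ref{prop:catmindeg} or by a direct greedy argument that exploits the minimum-degree condition from $D'$.

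The main obstacle I anticipate is the double-counting step: leveraging the \ksf condition to force a small-overlap admissible pair while simultaneously guaranteeing the degree lower bounds $d^\pm(u_i) \ge d_i$, given only the arc count $>(k-1)n$. A secondary challenge is the bookkeeping when $v_1$ and $v_2$ are far apart in $T$, since the internal vertices on the $v_1$–$v_2$ path carry their own subtrees that must not clash with the images of the neighborhoods of $v_1$ and $v_2$; organizing the argument by the types of $v_1, v_2$ — hence by which of the three forbidden $K_{2,s}$ orientations is being avoided — should keep the case analysis manageable.
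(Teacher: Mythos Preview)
Your proposal is a sketch rather than a proof, and it has a genuine structural gap in the central step. You want to first locate a pair $(u_1,u_2)$ in $D'$ whose role-appropriate neighbourhoods $N_1,N_2$ are large and have $|N_1\cap N_2|<s$, and only afterwards ``handle'' the $v_1$--$v_2$ path. But the $\mathcal{K}_{2,s}$-free hypothesis bounds the intersection of \emph{any} two such neighbourhoods; it gives you no leverage towards finding two high-degree vertices that are additionally joined in $D'$ by an antidirected path of the correct length and orientation pattern. An arbitrary pair $u_1,u_2$ with the right degrees may simply not be connectable in the way you need, and your sentence ``the unique $v_1$--$v_2$ path in $T$ is handled as part of this embedding of the two big stars'' does not indicate any mechanism for this. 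This is precisely the obstacle the paper works hardest to overcome.

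The paper's route is different in an essential way: it does not select $u_1,u_2$ first. Instead it packages the two stars and the connecting path into a single caterpillar, the double broom $B_{uv}$, and embeds $B_{uv}$ in one shot. In the favourable cases this is done via Proposition~\ref{prop:catmindeg} (the caterpillar embedding lemma), which is why that proposition exists; in the remaining cases (Case~\ref{rem:2.2}) a delicate iterated reembedding argument is needed, with several subcases depending on $\Delta-\Delta_2$, on whether $v$ is an in- or out-vertex, and on the number of leaf neighbours of $u$ and $v$. The extension from $B_{uv}$ to $T$ is also not a straightforward greedy argument: it requires a swap-and-retry scheme (tracking auxiliary trees $T_1,T_2$ and vertices $w,z$) together with repeated appeals to Lemma~\ref{lemma:k/4neighbors}. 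Your proposal does not anticipate either of these mechanisms.
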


\subsection{Sketch of the proof of Lemmas~\ref{lemma:onepointfive} and~\ref{lemma:third}}

In both Lemmas~\ref{lemma:onepointfive} and~\ref{lemma:third}, we start by finding a subdigraph of $D$
that can serve as our host digraph. We discuss this step first, and then split our overview into the two cases stated in the lemmas. This corresponds to our division later into Sections~\ref{sec:selection-subdigraph},~\ref{sec:Delta2-small}~and~\ref{sec:lemma-third}.

\paragraph{Finding a suitable subdigraph $D'$ of $D$} 
Let us start by noting that if instead of a digraph~$D$ we were working with a graph $G$, then a condition of the density would directly imply the existence of  a subgraph of $G$ with at least the same density and obeying a minimal degree condition. Indeed, it is a very well known fact that every graph $G$ with more than $(k-1)|V(G)|/2$ edges has a subgraph~$H$ with more than $(k-1)|V(H)|/2$ edges that has minimum degree $\delta(H)\ge k/2$. One can find such a subgraph $H$ by successively deleting suitable vertices from $G$. This  property turns out to be very useful for finding subgraphs, as we can now limit our considerations to $H$, having obtained the minimum degree condition `for free'.

Unfortunately, an analogous statement  in digraphs does not hold, if we replace the minimum degree with the minimum semidegree (the minimum of all the out- and in-degrees). However, it has been observed~\cite{antipaths, maya-digraphs, camila}   that an analogous statement does hold if instead of using the minimum semidegree, we only require a condition on the {\it minimum pseudo-semidegree}. The minimum pseudo-semidegree of a non-empty digraph $D$ is defined as the maximum $d\in\mathbb N$ such that no vertex of $D$ has in- or out-degree in the interval $[1, d-1]$. The minimum pseudo-semidegree of an empty digraph~$D$ is $0$.   Using this notion, an analogue of the graph situation can be proved.  
\begin{lemma}[see~\cite{antipaths, maya-digraphs, camila}]\label{lempseu}
	Every digraph $D$ with more than $(k-1)|V(D)|$ arcs has a subgraph~$D'$ that has minimum pseudo-semidegree $\overline{\delta^0}(D')\ge  k/2$. 
\end{lemma}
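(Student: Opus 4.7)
The plan is to reduce the digraph statement to the well-known graph analogue recalled just above the lemma, via a ``bipartite split'' construction. Let $B$ be the bipartite graph with parts $V^+ = \{v^+ : v \in V(D)\}$ and $V^- = \{v^- : v \in V(D)\}$, where for each arc $(u,v) \in A(D)$ we place a single edge $u^+v^-$. Then $|V(B)| = 2|V(D)|$ and $|E(B)| = |A(D)| > (k-1)|V(D)| = (k-1)|V(B)|/2$, so $B$ satisfies the hypothesis of the graph result.

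Next, I would apply the graph lemma to $B$ to obtain a nonempty subgraph $B' \subseteq B$ with $\delta(B') \geq \lceil k/2 \rceil$. Concretely, one iteratively deletes any vertex of degree less than $k/2$; since $|E(B)|$ is an integer and $(k-1)|V(B)|/2$ is a half-integer, the strict density $|E| > (k-1)|V|/2$ is preserved at each step, so the vertex set cannot be exhausted and every remaining vertex has degree at least $\lceil k/2\rceil$.

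It then remains to translate $B'$ back into a subdigraph. Set $V(D') = \{v \in V(D) : v^+ \in V(B') \text{ or } v^- \in V(B')\}$ and $A(D') = \{(u,v) \in A(D) : u^+v^- \in E(B')\}$. For any $v \in V(D')$, the out-degree $d^+_{D'}(v)$ equals $\deg_{B'}(v^+)$ when $v^+ \in V(B')$ and equals $0$ otherwise; the analogous statement holds for $d^-_{D'}(v)$. Hence each semidegree at every vertex of $D'$ is either $0$ or at least $\lceil k/2 \rceil$, which gives $\overline{\delta^0}(D') \geq k/2$ as required.

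The only conceptual obstacle worth flagging is why one cannot simply iterate in $D$ itself, deleting vertices of small total degree: the pseudo-semidegree condition constrains $d^+(v)$ and $d^-(v)$ separately, and a vertex with $d^+(v) + d^-(v) \geq k$ could still have, say, $d^+(v) = 1$ and $d^-(v) = k-1$, violating the desired condition. The bipartite split cleanly decouples in- from out-degrees and thereby lets the standard graph argument handle each side independently.
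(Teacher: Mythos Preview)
Your proof is correct and follows the same idea as the paper. The paper only sketches Lemma~\ref{lempseu} in one line (``successively deleting arcs from vertices of too low in- or out-degree''), which is exactly what your bipartite split does once you note that deleting the vertex $v^+$ from $B$ amounts to deleting all out-arcs at $v$ in $D$; the paper itself uses precisely this split-graph device in the proof of the stronger Lemma~\ref{cor:subdigraph}.

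One small slip: your parenthetical justification that ``$(k-1)|V(B)|/2$ is a half-integer'' is false here, since $|V(B)|=2|V(D)|$ is even. The invariant $|E|>(k-1)|V|/2$ is nonetheless preserved at each deletion by the standard computation (a vertex of degree $<k/2$ has integer degree at most $(k-1)/2$), so the argument goes through unchanged---just drop that remark.
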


The subdigraph $D'$ can be found by successively deleting arcs from vertices of too low in- or out-degree. For embedding antidigraphs, a lower bound on the minimum pseudo-semidegree seems to be similarly useful as a lower bound on the minimum semidegree (see~\cite{maya-digraphs} for a discussion). 
So we could now work in $D'$.

But, a second complication arises. In the situation described above for graphs, the subgraph $H$ of $G$ has the same average degree as $G$. Therefore, $H$ has a vertex of degree at least $k$. This could be crucial for the embedding of a tree with $k$ edges, as $T$ may have a vertex of degree exceeding $k/2$. Turning back to the digraph situation, if we wish to find an antidirected tree in a digraph $D'$, the property we need is that $D'$ has a vertex that is suitable for the vertex $v_1$ of highest total degree in $T$. If $v_1$ has high out-degree, then we need a vertex of high out-degree   in $D'$, and if $v_1$ has high in-degree, then we need a vertex of high in-degree   in $D'$.
Now, the subdigraph $D'$ from Lemma~\ref{lempseu} does not necessarily contain the vertex we need. It does have a vertex of out-degree at least $k$ or a vertex of in-degree at least $k$, but we cannot choose which of these two is present. So Lemma~\ref{lempseu} alone is not sufficient for our purposes.

In order to overcome this difficulty, we prove a new lemma that provides a different and more suitable subdigraph $D'$ of $D$. This is  Lemma~\ref{cor:subdigraph}. We believe that this lemma may be suitable for future use in other digraph problems.

The content of  Lemma~\ref{cor:subdigraph} is that in the above situation, $D$ has a subdigraph $D'$, which, in a certain way, maintains the high arc density of $D$, 
and in addition satisfies certain  conditions on the minimum in- and out-degrees of its vertices. The precise bounds on the degrees in $D'$ rely on a parameter $r$, which is part of the input of Lemma~\ref{cor:subdigraph}, and which, in the application of this lemma, will depend on $\Delta$, the maximum total degree of  $T$, and on other properties of $T$. 

Let us make this more precise. We can  and will assume the the vertex $v_1$ of highest total degree in $T$ has high out-degree (otherwise we can switch all orientations both in $T$ and in $D$ and resolve the problem there). So we need a vertex of high out-degree in $D$. Now, 
Lemma~\ref{cor:subdigraph} allows for two slightly different outcomes. That is, there are two sets of conditions of which the output subgraph $D'$ fulfills one (either conditions \ref{cor:1},~\ref{cor:2} and~\ref{cor-subdigraph-i}, or conditions~\ref{cor:1},~\ref{cor:2} and~\ref{cor-subdigraph-ii} of Lemma~\ref{cor:subdigraph}). 
For details, see the statement of Lemma~\ref{cor:subdigraph} in Section~\ref{sec:selection-subdigraph}.  

The main difference between~\ref{cor-subdigraph-i} and~\ref{cor-subdigraph-ii} is the following. In the former situation, we can guarantee a vertex of out-degree at least $k$, but the minimum pseudo-semidegree of $D'$ may fail to be at least $k/2$: it is bounded from below only by~$r$. In the second situation, the minimum pseudo-semidegree of $D'$ is at least $k/2$, but the largest out-degree is bounded from below only by $k-r$. To be able to work in either of these situations (recall that we do not know which is given to us by Lemma~\ref{cor:subdigraph}), we will have to make sure that in the application of  Lemma~\ref{cor:subdigraph} the parameter $r$ is chosen with extreme care, and  depending on $T$.

\paragraph{Proving Lemma~\ref{lemma:onepointfive}: the case when $\boldsymbol{\Delta_2\le \lfloor k/4\rfloor+2}$.} 
We distinguish two subcases: the maximum total degree $\Delta$ of $T$ is at most $\lfloor k/4\rfloor$, or it exceeds $\lfloor k/4\rfloor$. In the former case, it is sufficient to work in the subdigraph $D'$ given by Lemma~\ref{lempseu}, as all vertices in $T$ have low degrees. We first embed greedily a largest possible subtree $T'$ of $T$. If  $T'\neq T$, there is a vertex $w\in V(T')$ that still has an out- or in-neighbour $w'$ that needs to be embedded,  while all the out- or in-neighbourhood of its image $f(w)$ has been used. We unembed a suitable vertex of $T'$ (and all that comes `after it'), and embed $w'$ instead. We try to extend this embedding, and show that if we fail to extend it to all of $T$, then there is another vertex $z$ whose image $f(z)$ has large out- or in-degree into used parts of~$D'$. The existence of $w$ and $z$, together with the fact that $D'$ is a \ksf digraph, can be used to show that all other vertices do not have high degree into the part of $D'$ that has been used by both embeddings. (See Lemma~\ref{lemma:k/4neighbors} for a more precise statement of the property of \ksf digraphs that we use.) This allows us to modify our second embedding to an embedding of all of $T$. For all details, see Section~\ref{sec:lemma-first}.

Let us now turn to the case when $\Delta$ exceeds $\lfloor k/4\rfloor $. In this case we need to use  Lemma~\ref{cor:subdigraph} instead of Lemma~\ref{lempseu}. We then show that we can relatively easily embed $T$ unless the outcome of Lemma~\ref{cor:subdigraph} (applied with $r=\min\{\lceil k/2\rceil, k-\Delta\}$) is a subdigraph $D'$ of minimum pseudo-semidegree at least~$k/2$. That particular case is handled by Lemma~\ref{lem:case3b}, whose proof occupies most of  Section~\ref{sec:lemma-second}. In this proof, we first embed a subtree of~$T$ consisting of all vertices at distance at most two to a maximum degree vertex of~$T$. We then inductively extend this embedding to all of $T$, with our arguments relying heavily on  Lemma~\ref{lemma:k/4neighbors}, i.e.~the fact that $D$ is $\mathcal K_{2,s}$-free.  
For all further details, see Section~\ref{sec:lemma-second}.

\paragraph{Proving Lemma~\ref{lemma:third}: the case when $\boldsymbol{\Delta_2\ge \lfloor k/4\rfloor +3}$.}
We start by defining a subtree~$B_{uv}$ of $T$. The caterpillar $B_{uv}$ is a subdivided double-star that is defined as follows: If $u$ and $v$ are the two vertices of largest total degree in $T$, then we let $B_{uv}$ consist of  the path joining $u$ and $v$ plus all neighbours of $u$ and $v$ (see Definition~\ref{defdoublebroom}). 

Our plan is  to apply Lemma~\ref{cor:subdigraph} to find a subdigraph $D'$ of $D$. The input parameter $r$ for Lemma~\ref{cor:subdigraph}  will depend on  $T$, as we will specify next. 
It will be convenient to distinguish two cases, \ref{rem:1} and~\ref{rem:2}. One of the main characteristics of case \ref{rem:1} is that $B_{uv}$ is relatively small. The precise case distinction is somewhat technical and is made explicit	in Subsection~\ref{subs:step1}.
Now, in case~\ref{rem:1}, we choose the  input parameter $r$ for Lemma~\ref{cor:subdigraph} as $r=\lceil k/2\rceil$ and in case \ref{rem:2}  we choose $r=\min\{\lceil 5k/12\rceil,k-\Delta (T)\}$. Depending on the outcome of   Lemma~\ref{cor:subdigraph}, we further subdivide case~\ref{rem:2} into cases \ref{rem:2.1} and~\ref{rem:2.2}.  

Once $D'$ is found, our embedding process consists of two  steps: 
first, we embed $B_{uv}$, and second, we extend this to an embedding of all of $T$. In cases \ref{rem:1} and \ref{rem:2.1}, the tree $T$ will be embedded completely into $D'$, while in case \ref{rem:2.2}, some vertices of $D-D'$ will be used for certain leaves of~$T$.

The first step, embedding $B_{uv}$, is executed in Section~\ref{subs:double-broom}. 
We make use of several approaches that exploit different properties of  $B_{uv}$. 
For instance, if $|B_{uv}|\le 3k/4$ it is easy to use the fact that $D$ is \ksf to see that $B_{uv}$ can be embedded greedily into $D'$. We treat all of case~\ref{rem:1} similarly.  In case \ref{rem:2.1}, we use Proposition~\ref{prop:catmindeg} to embed $B_{uv}$ in $D'$. Case \ref{rem:2.2} is much more complicated. We start by taking a maximal subdigraph $T'$ of $B_{uv}$
that embeds into $D$, where we require that all but certain leaves of $T$ are embedded in $D'$. We show that the closed neighbourhood of either $u$ or $v$ belongs to $T'$. Then, heavily using the fact that $D$ is a \ksf digraph, we prove that $T'$ contains more than $3k/4$ vertices, and use this to show that $T'=B_{uv}$, as desired. For this, we carefully analyse different cases that depend on the difference of the degrees of $u$ and $v$, on the number of leaves of $T$ adjacent to $u$ or $v$, and on whether $v$ is an out- or an in-vertex.

In the second step, which  is accomplished in Section~\ref{subs:extension}, we extend the embedding of $B_{uv}$ to an embedding of all of $T$. Again, we will rely on the property that $D$ is  $\mathcal K_{2,s}$-free.  It turns out that it is easy to embed $T$ unless we are in case \ref{rem:2.1} and $D'$ has minimum pseudo-semidegree at least  $r=\lceil 5k/12\rceil$. If we fail to embed $T$ in this case, there is vertex $w\in V(T)$ with an unembedded neighbour $w'$ such that all neighbours of the image $f(w)$
of $w$ are used. One of them is used by a non-neighbour of $w$, which can be swapped for $w'$, while keeping an essential part of the original embedding. We try to extend the new embedding: if we fail there is another vertex $z$ with properties as $w$ above, and we find a contradiction to $D$ being  $\mathcal K_{2,s}$-free.

 %%%%%%%%%%%%%%%%%%%%%%%
 %%%%%%%%%%%%%%%%%%%%%%%
 %%%%%%%%%%%%%%%%%%%%%%%
 %%%%%%%%%%%%%%%%%%%%%%%

\section{Notation}\label{sec:not}

Given a graph $G$, we write $|G|=|V(G)|$ and $e(G)=|E(G)|$ for the order and size of $G$, respectively. 
The minimum, average and maximum degree of $G$ are denoted by $\delta(G)$, $d(G)$ and $\Delta(G)$, respectively. 
For a digraph $D$ we use similar notation: $|D|=|V(D)|$ and $a(D)=|A(D)|$ are the order and size of $D$. 
When referring to a vertex $a$ of $D$, we may write $a\in V(D)$ or simply $a\in D$. 
A digraph $D$ is {\it antidirected} if it contains no directed path of length two. 
A {\it double-star} is a digraph obtained from two directed stars by adding an arc between their centers.  

Given a vertex $a$ of a digraph $D$, let $N^+(a)$ and $N^-(a)$  denote its (out- and in- )neighborhood, respectively. The closed (out- or in-)neighborhood is denoted as $N^+[a]$ and $N^-[a]$, respectively. 
Regarding minimum and maximum degrees we use the following notation: 
\begin{align*}
\delta^+(D)&:=\min_{a\in D}\{\deg^+(a)\},  & \delta^-(D)&:=\min_{a\in D}\{\deg^-(a)\}, \\
\Delta^+(D)&:=\max_{a\in D}\{\deg^+(a)\},  & \Delta^-(D)&:=\max_{a\in D}\{\deg^-(a)\}. 
\end{align*}
Additionally, we define the \textit{minimum semidegree} of $D$ as $\delta^0(D):=\min \{\delta^+(D),\delta^-(D)\}$. 
We also define the \textit{minimum pseudo-out-degree} $\overline{\delta^+}$ as follows: if $D$ has no edges, we let
$\overline{\delta^+}(D)=0$, otherwise $\overline{\delta^+}(D)$ is the minimum  $d\in \mathbb{N}$
such that  for all vertices $a\in D$ we have $\deg^+(a)=0$ or $\deg^+(a)\ge d$. The \textit{minimum pseudo-in-degree}
$\overline{\delta^-}(D)$ is defined similarly. Furthermore, the \textit{minimum pseudo-semidegree} 
of $D$ is $\overline{\delta^0}(D):=\min\{\overline{\delta^+}(D),\overline{\delta^-}(D)\}$.

Given a digraph $D$, we are typically interested in its vertices with positive in-degree or out-degree.
We denote these sets of vertices as:   
\[D^+:=\{a\in D:\deg^+(a)>0\} \text{\quad and \quad} D^-:=\{a\in D:\deg^-(a)>0\}. \]

Note that if $D$ has no isolated vertices, then $|V(D)|\ge |D^+|+|D^-|$, with equality when~$D$ is  antidirected. 
A non-isolated vertex $a$ of an antidirected graph $D$ is an \textit{out-vertex} if it is in $D^+$ and an \textit{in-vertex} otherwise.   
We define $\re{a}\in \{+,-\}$, the \textit{sign} of $a$, as $+$ if $a$ is an out-vertex and as $-$ otherwise. 
A {\it rooted antidirected tree} is an antidirected tree with a designated vertex as its root. 
Given a rooted antidirected tree $T$ with root $w$, for a vertex $x\in T\setminus \{w\}$, the \textit{parent} of $x$,
denoted as $p_x$, is the (unique) vertex of $N^{\re{x}}(x)$ that belongs to the path joining $w$ and $x$. A vertex $y$ is a {\it descendant} of $x$ if $x$ is on the path joining $w$ and $y$.

Given an antidirected tree $T$ and a digraph $D$, an {\it embedding} from $T$ to $D$ is an injective 
function $f: V(T) \to V(D)$ preserving adjacencies, that is, for each arc $uv \in T$, $f(u)f(v)$ is an arc of $D$.
If such an embedding exists, we say that $T$ \textit{embeds} in $D$.   
The domain and image of the embedding $f$ are denoted by $\dom f$ and $\im f$, respectively. 
For a subset $X\subseteq V(T)$ and an embedding $f$ from $T$ to $D$, we write $f(X)$ to denote
the image of $X$ in $D$. Note that an embedding $f$ from $T$
to~$D$ induces an injective function $\varepsilon_f$ mapping  arcs to arcs. 

Notation that is only needed in one of the sections of the paper will be introduced there.

%%%%%%%%%%%%%%%%%%%%%%%
%%%%%%%%%%%%%%%%%%%%%%%
%%%%%%%%%%%%%%%%%%%%%%%
%%%%%%%%%%%%%%%%%%%%%%%

\section{Caterpillars}
\label{sec:caterpillars}\label{sec:cat}

In Section~\ref{sec:dirEScat}, we adapt Perles' proof (see~\cite{blog, moser}) of Proposition~\ref{prop:EScat}  to a proof of Proposition~\ref{prop:dirEScat}, that is,  we prove Conjecture~\ref{ESantitrees} for antidirected caterpillars. 

In Section~\ref{subs:cat-mindeg-digraphs}, we prove a variant  
of Proposition~\ref{prop:dirEScat}, namely, Proposition~\ref{prop:catmindeg}. For  the proof of this proposition, it will be a good warm-up exercise to read Section~\ref{sec:dirEScat} first, although   this is not strictly necessary for understanding the rest of the paper.

\subsection{Antidirected caterpillars in dense digraphs}\label{sec:dirEScat}

In this subsection, we prove Proposition~\ref{prop:dirEScat}.
Let us define the necessary notation in the digraph setting.  
A {\it convex digraph} $D$ is a drawing of a directed graph in the plane, with the vertices lying on the unit  circle and the arcs being straight segments (with a direction)
between the corresponding vertices. So, if for some $x,y\in V(D)$ both arcs $xy$ and $yx$ exist, they fully overlap each other.
Every arc $xy$ or $yx$ of $D$ divides $V(D)\setminus\{x,y\}$ into two sets,   $V^{x,y}_{\leftarrow x}$, which contains all vertices that lie after $x$ and before $y$ on the circle, counting clockwise, 
and $V^{x,y}_{x\rightarrow}$, which contains all other vertices of $V(D)\setminus\{x,y\}$.

We can view any (undirected)
 caterpillar as a tree $T_{un}$\footnote{The subscript ``$un$'' indicates that the tree  is undirected} that consists of a path, which we call its 
{\it spine}, and  leaves   attached to the inner vertices of the spine. So the first and last vertex of the spine are leaves of $T_{un}$. The  last vertex of the spine is called the {\it final vertex} of the caterpillar, and the edge incident with it is the {\it final edge}.

Let $T_{un}$ be a caterpillar with final vertex $u$, final edge $vu$, and spine $P_{un}$.  Let $D$ be a convex digraph. We say an arc  $a$ is 
\textit{good} for an antidirected orientation $T$ of $T_{un}$ if 
there is an embedding $f$
of $T$ in $D$ such that the following holds. If $x=f(u)$ and $y=f(v)$, then
\begin{itemize}
	\item  the arc of $T$ that contains $\{u,v\}$ is mapped by $\varepsilon_f$ to $a$, 
	\item if $|V(P)|$ is odd    then $V^{x,y}_{\leftarrow x}$ contains no images of  vertices of $V(T)$, and 
	\item if $|V(P)|$ is even  then $V^{x,y}_{x\rightarrow}$ contains no images of  vertices of $V(T)$.
\end{itemize}

\begin{lemma}
	\label{lemma:good-middle-arc}
	Let $k, n\in\mathbb N^+$, let $T$ be an antidirected caterpillar with $k$ arcs and let $D$ be a convex   
	digraph of order~$n$. 
	Then $D$ contains at least $a(D)-(k-1)n$ good arcs for $T$.  
\end{lemma}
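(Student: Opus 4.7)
The plan is to adapt Perles' argument from the proof of Lemma~\ref{lem:embed} by strong induction on $k$, working with arcs in place of half-edges. The base case $k = 1$ is immediate, since every arc of $D$ is good for a single-arc antidirected caterpillar. For the inductive step, let $T$ have spine $P$ (necessarily with $|V(P)| \geq 3$), final edge $vu$, and spine vertex $w$ preceding $v$. Because reversing the orientation of every arc of $T$ simultaneously with every arc of $D$ preserves the hypotheses and the notion of a good arc, I may assume $\re{v} = +$; then $u$ and every other neighbor of $v$ in $T$ is an in-vertex, so every arc incident with $v$ in $T$ is oriented out of $v$. Let $T'$ be the antidirected caterpillar obtained from $T$ by deleting all neighbors of $v$ in $T$ except $w$, and set $m := k - |E(T')|$. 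Then $T'$ has spine $P' = P - u$, final edge $wv$, and final vertex $v$.

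Apply the induction hypothesis to $T'$ to get at least $a(D) - (k-m-1)n$ arcs of $D$ that are good for $T'$. Every such arc has the form $xy$ (directed from $x$ to $y$), an out-arc of $x$, because in any witnessing embedding $f'$ one has $f'(v) = x$ and the arc $vw$ of $T'$ points out of $v$. For each vertex $x$ of $D$, I order the out-arcs at $x$ by the clockwise position of their heads, starting from the vertex immediately after $x$ on the circle, producing an order $\tau_x$; call an out-arc \emph{nasty} if it is among the first $m$ in $\tau_x$. Since each arc is an out-arc of exactly one vertex, the set $N$ of nasty arcs has $|N| \le mn$. Let $F$ be the set of arcs good for $T'$ that are not nasty; then
\begin{equation*}
  |F| \;\geq\; \bigl(a(D) - (k-m-1)n\bigr) - mn \;=\; a(D) - (k-1)n.
\end{equation*}
It remains to build an injection $\varphi\colon F \to A(D)$ whose image consists of arcs good for $T$. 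Given $xy \in F$, let $xz$ be the out-arc of $x$ appearing $m$ positions before $xy$ in $\tau_x$, which exists since $xy$ is not nasty, and set $\varphi(xy) := xz$. Injectivity is clear, as $xy$ is recovered by advancing $m$ positions in $\tau_x$ from $xz$.

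To verify that $xz$ is good for $T$, I start from a good embedding $f'$ of $T'$ with $f'(v) = x$ and $f'(w) = y$, and extend $f'$ to an embedding $f$ of $T$ by setting $f(u) = z$ and mapping the remaining $m - 1$ leaves of $v$ in $T$ to the heads of the $m - 1$ out-arcs of $x$ strictly between $xz$ and $xy$ in $\tau_x$. These targets are distinct out-neighbors of $x$ which, together with $z$, lie on the side of the edge between $x$ and $y$ that is free for $f'$, so $f$ is injective and every arc needed by $T$ is realized in $D$. The main obstacle I expect is verifying the free-side condition for the new arc $xz$: since $|V(P)| = |V(P')| + 1$ flips parity, the good condition for $T$ demands the opposite side of $xz$ to be free than the side that was free of $xy$ for $T'$, so one has to inspect the convex cyclic order carefully to confirm that the prescribed side of $xz$ is indeed free, noting that $z$, the $m - 1$ new leaves, the image $y$ of $w$, and all images of $T' - \{v,w\}$ all sit on the other side of $xz$. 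This parity flip is the one delicate point; the remainder is a direct transcription of the graph-case argument.
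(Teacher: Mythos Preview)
Your proposal is correct and follows essentially the same approach as the paper's proof: strong induction on $k$, deleting the $m$ out-neighbours of $v$ other than $w$ to form $T'$, defining nasty out-arcs at each vertex via the clockwise order $\tau_x$, and shifting each non-nasty good arc for $T'$ back $m$ positions to produce a good arc for $T$. The paper streamlines the parity issue by fixing $|V(P)|$ even (and remarking that the odd case is analogous with the roles of $V^{x,y}_{\leftarrow x}$ and $V^{x,y}_{x\rightarrow}$ swapped), whereas you leave both parities in play and flag the flip as the one delicate point; either way the verification goes through.
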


Before we prove Lemma~\ref{lemma:good-middle-arc}, let us see how it implies Proposition~\ref{prop:dirEScat}.

\begin{proof}[Proof of Proposition~\ref{prop:dirEScat}]
Draw $D$ as a convex  digraph  $D'$  in the plane. As  $D'$ has more than $(k-1)n$ arcs, it contains at least one good arc for $T$ by Lemma~\ref{lemma:good-middle-arc}, and thus $T$ embeds in $D$.
\end{proof}

\begin{proof}[Proof of Lemma~\ref{lemma:good-middle-arc}]
	We proceed by (strong) induction on $k$. 
	In the base case $k=1$, we have $a(T)=1$, and every arc of $D$ is good for $T$. 
	
	For the induction step, let $D$ and $T$ with spine $P$ be given. Note that $|V(P)|\ge 3$, let $w, v, u$ be the last three vertices on $P$. 
	We assume that $|V(P)|$ is even; 
	the other case is analogous (each occurrence of $V^{x,y}_{\leftarrow x}$ would become $V^{x,y}_{\rightarrow x}$ and vice versa). We also assume that the arc between $u$ and $v$ is directed from $v$ to $u$, as the other case is analogous (each occurrence of the prefix {\it out-} would be replaced with {\it in-} and vice versa).
	
	Let $T'$ be  obtained from $T$ by deleting all   out-neighbours of $v$  except~$w$, and let $m$ be the number of vertices we have deleted. Since $T'$ has $k-m$ arcs, 
	by the induction hypothesis, $D$ has at least $a(D)-(k-m-1)n$ good arcs for $T'$.  
	
	For each out-vertex $x\in D$ we   order $V(D)\setminus \{x\}$ clockwise starting with the vertex immediately
	after $x$ on the circle. This induces a total order $\tau_x$ on the outgoing arcs from $x$. We call an arc $xy$ \textit{nasty}
	if it is one of the first $m$ outgoing arcs from $x$. 
	Let $F$ be the set of arcs of $D$ that are good for $T'$ and not nasty. So, 
	\[|F|\ge a(D)-(k-m-1)n-mn=a(D)-(k-1)n.\]
	
	We now define an injection, $\varphi$, from $F$ to $A(D)$ by setting $\varphi(xy):=xz$ for $xy\in F$, where $xz$ is $m$ positions before  $xy$ in $\tau_x$.  We will show that each arc from $\varphi(F)$ is good for $T$, which in turn completes the proof. 
	
	For this, consider $xy\in F$, and an embedding of $T'$ mapping $vw$ to $xy$ and having the property that 
	$V^{x,y}_{\leftarrow x}$ contains no images of  vertices of $V(T')$.
	(Such an embedding exists because $xy$ is a good arc for $T'$.) 
	We extend this embedding by embedding $u$ in $z$, and   embedding all vertices of $V(T')\setminus V(T)$ into 
	vertices $z'$ satisfying that $z'$ is an out-neighbour of $x$ and lies between 
	$x$ and $z$ on the circle. The choice of $z$ implies the existence of a sufficient number
	of such vertices $z'$. It is readily seen that $V^{xz}_{z\rightarrow}$ contains no vertex of $\varphi(T)$, which is as desired.
\end{proof}

\subsection{Antidirected caterpillars in other compatible digraphs}
	\label{sec:minEScat}
\label{subs:cat-mindeg-digraphs}
In this subsection we state and prove Proposition~\ref{prop:catmindeg}, which will be used later on in the paper.

\begin{proposition}\label{prop:catmindeg}
Let $n,k\in\mathbb N^+$, and let $D$ be a digraph on $n$ vertices with $a(D)>\frac{1}{2}(k-1)(|D^+|+|D^-|)$. Let $T$ be an antidirected caterpillar  with $a(T)=k$. Suppose at least one of the following holds:
\begin{itemize}
\item $|D^+|\le |D^-|$ and $|T^+|\le |T^-|$; or
\item $|D^+|\ge |D^-|$ and $|T^+|\ge |T^-|$.
\end{itemize}
Then $T$ embeds in $D$.
\end{proposition}

For the proof of Proposition~\ref{prop:catmindeg}, we need the following lemma.
We will continue to use the notation for antidirected caterpillars and convex digraphs from the previous subsection. 

\begin{lemma}
	\label{lemma:good-middle-arc-mindeg}
	Let $n\in\mathbb N^+$, let $D$ be a convex   
	digraph of order~$n$. Let $T$ be an antidirected caterpillar. 
	Then $D$ contains at least $a(D)-(|T^+|-1)|D^-|-(|T^-|-1)|D^+|$ good arcs for~$T$. 
	\end{lemma}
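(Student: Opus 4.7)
The plan is to mimic the inductive proof of Lemma~\ref{lemma:good-middle-arc} but track the vertex types ($+$ and $-$) more carefully, so that the "nasty" arcs we discard are counted against $|D^+|$ or $|D^-|$ (whichever is relevant), rather than $n$. The proof will be by strong induction on $k=a(T)$. The base case $k=1$ is immediate: every arc of $D$ is good for the single-arc $T$, and the claimed lower bound is just $a(D)$ since $|T^+|=|T^-|=1$.

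For the induction step, fix $T$ with spine $P$ and let $w,v,u$ be the last three vertices of $P$. I will split into two symmetric cases depending on the sign of $v$, and within each case further assume a parity of $|V(P)|$ (the other parity being analogous, as in Lemma~\ref{lemma:good-middle-arc}). Suppose first that $v$ is an out-vertex, so the arc between $u$ and $v$ is $vu$. Form $T'$ by deleting $m$ of the out-neighbours of $v$ (all in-vertices), keeping $w$. Then $|(T')^+|=|T^+|$ and $|(T')^-|=|T^-|-m$, and $T'$ has $k-m$ arcs, so by the inductive hypothesis $D$ has at least
\[
a(D)-(|T^+|-1)|D^-|-(|T^-|-1-m)|D^+|
\]
good arcs for $T'$. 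Now, exactly as before, for each out-vertex $x\in D^+$ order its outgoing arcs clockwise starting just after $x$ and call an arc \emph{nasty} if it is among the first $m$ outgoing arcs from some $x\in D^+$; the number of nasty arcs is at most $m|D^+|$. Let $F$ be the arcs that are good for $T'$ and not nasty, so
\[
|F|\;\ge\; a(D)-(|T^+|-1)|D^-|-(|T^-|-1)|D^+|.
\]
Define $\varphi\colon F\to A(D)$ by sending $xy\in F$ to the arc $xz$ that lies $m$ positions before $xy$ in the out-order $\tau_x$; this is well-defined since $xy$ is not nasty, and injective. Then, as in the proof of Lemma~\ref{lemma:good-middle-arc}, every $\varphi(xy)$ is good for $T$: take the embedding of $T'$ witnessing that $xy$ is good for $T'$, embed $u$ into $z$, and embed the newly introduced leaves at $v$ into the out-neighbours of $x$ strictly between $y$ and $z$ in $\tau_x$; the choice of $z$ guarantees that enough such vertices exist, and the resulting embedding of $T$ has no images in the required arc of the unit circle.

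The symmetric case, where $v$ is an in-vertex (so the arc between $u$ and $v$ is $uv$), is handled by reversing the roles of $+$ and $-$: we delete $m$ in-neighbours of $v$ (all out-vertices) to form $T'$ with $|(T')^+|=|T^+|-m$ and $|(T')^-|=|T^-|$; we order incoming arcs at each in-vertex $x\in D^-$, bound the nasty set by $m|D^-|$, and the arithmetic again yields $|F|\ge a(D)-(|T^+|-1)|D^-|-(|T^-|-1)|D^+|$. The parity case on $|V(P)|$ only switches the side of the circle in which the embedding lives, via $V^{x,y}_{\leftarrow x}$ versus $V^{x,y}_{x\rightarrow}$, and does not affect the counting.

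The main conceptual point — and the only thing that needs real care — is the accounting of nasty arcs: in Lemma~\ref{lemma:good-middle-arc} we could afford the crude bound $mn$ because we lost a factor of $n$ per deleted leaf anyway, whereas here we must charge nasty arcs to the \emph{correct} side ($D^+$ or $D^-$) so that they cancel exactly the saving obtained from the inductive hypothesis on $T'$. Once one observes that in both orientation cases the nasty-count matches precisely the term improved in the IH bound, the argument is essentially a bookkeeping refinement of Lemma~\ref{lemma:good-middle-arc}, and no new ideas beyond those already developed in Section~\ref{sec:dirEScat} are required.
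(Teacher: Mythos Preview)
Your proposal is correct and follows essentially the same approach as the paper's own proof: strong induction on $a(T)$, deleting the leaves at the penultimate spine vertex $v$ to form $T'$, applying the inductive hypothesis, and then discarding the first $m$ outgoing (respectively incoming) arcs at each vertex of $D^+$ (respectively $D^-$) as ``nasty'' so that the loss is exactly $m|D^+|$ (respectively $m|D^-|$), matching the gain from the IH. The paper treats only the case where $v$ is an out-vertex and declares the other case analogous, whereas you spell out both cases; otherwise the arguments are identical.
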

	
	\begin{proof}[Proof of Proposition~\ref{prop:catmindeg}]
	We assume that $|D^+|\le |D^-|$ and $|T^+|\le |T^-|$, the other case is analogous. 
	If we can apply Lemma~\ref{lemma:good-middle-arc-mindeg} to a convex drawing of $D$, we are done. So all we need to see is that 
	\begin{equation*}
	\frac{k-1}{2}(|D^+|+|D^-|)\ \ge \ (|T^+|-1)|D^-|+(|T^-|-1)|D^+|,
	\end{equation*}
	or equivalently,
	\begin{equation}\label{allworksout}
	\left(\frac{k+1}{2} - |T^+| \right)|D^-| \ \ge \ \left( |T^-| - \frac{k+1}{2} \right)|D^+|.
	\end{equation}
	In order to see~\cref{allworksout}, let us first observe that by the conditions of the proposition and given that $|T^+|+|T^-|=k+1$, we have that $|T^+|\le (k+1)/2\le |T^-|$. Thus the terms $(k+1)/2 - |T^+|$ and $|T^-| - (k+1)/2$ are  non-negative. So~\cref{allworksout} follows directly from the fact that $|D^+|\le |D^-|$, which holds by the assumptions of the proposition.
	\end{proof}

\begin{proof}[Proof of Lemma~\ref{lemma:good-middle-arc-mindeg}]
	We proceed by (strong) induction on $a(T)$, noting that for $a(T)=1$, there is only one arc in~$T$ and thus $|T^+|=|T^-|=1$, 
	and every arc of $D$ is good for $T$. 
	For the inductive step, note that~$T$ has at least three vertices, and let $w, v, u$ be the last three vertices on its spine. 
	
	From now on, we will assume that the arc between $u$ and $v$ is directed from $v$ to $u$, as the other case is analogous (more precisely, each occurrence of the prefix {\it out-} would be replaced with {\it in-} and vice versa, all signs $+$ and $-$ would change, as well as directions on the circle).
	
	Obtain $T'$ from $T$ by deleting all  out-neighbours of $v$  except~$w$. Let $m$ be the number of vertices we have deleted, and note that $|(T')^+|=|T^+|$ and $|(T')^-|=|T^-|-m$. So,
	by the induction hypothesis, we know that $D$ has at least $$a(D)-\big(|T^+|-1\big)|D^-|-\big(|T^-|-m-1\big)|D^+|$$ good arcs for $T'$.  
	
	As in the previous proof, we order, for each out-vertex $x\in D^+$, the set $V(D)\setminus \{x\}$ clockwise starting with the vertex immediately
	after $x$ on the circle. This induces a total order $\tau_x$ on the outgoing arcs from $x$. We call an arc $xy$ \textit{nasty}
	if it is one of the first $m$ outgoing arcs from~$x$. 
	
	Let $F$ be the set of arcs of $D$ that are good for $T'$ and not nasty for some $x\in D^+$. There are at most $m|D^+|$ such arcs. So, 
	\begin{align*}
		|F|&\ge a(D)-\big(|T^+|-1\big)|D^-|-\big(|T^-|-m-1\big)|D^+|  -m|D^+|\\&= a(D)-\big(|T^+|-1 \big)|D^-|-\big(|T^-|-1\big)|D^+|.
	\end{align*}

	Define an injection $\varphi$ from $F$ to $A(D)$ by setting $\varphi(xy):=xz$ for $xy\in F$, where $xz$ is $m$ positions before  $xy$ in $\tau_x$.  As in the previous proof, one can verify that each arc from $\varphi(F)$ is good for $T$, which completes the proof. 
\end{proof}

%%%%%%%%%%%%%%%%%%%%%%%
%%%%%%%%%%%%%%%%%%%%%%%
%%%%%%%%%%%%%%%%%%%%%%%
%%%%%%%%%%%%%%%%%%%%%%%

\section{Selecting a subdigraph for the tree embedding} ~\label{sec:selection-subdigraph}
As explained in Section~\ref{sec:overview}, given a digraph $D$ satisfying the conditions of the Conjecture~\ref{antitree}, our aim is to identify a subdigraph $D'$ of $D$ that 
obeys certain conditions on the degrees of its vertices. This is made explicit in Lemma~\ref{cor:subdigraph} below, which is the main result of this section. 

In order to prove  Lemma~\ref{cor:subdigraph}, we will use the digraph $D$ to construct a bipartite graph $G$, find a suitable subgraph $H$ of $G$, and then translate $H$ back to a subdigraph $D'$ of $D$. Lemma~\ref{lemma:bipartite} below takes care of finding the subgraph $H$ in $G$. For its proof, the following observation will come in handy. 

\begin{obs}
	\label{obs:deleting}
	Let $G$ be an $n$-vertex graph with $e(G)>(k-1)n/2$, and let $H$ be obtained from $G$ by deleting either 
	a vertex $u\in G$ with $\deg(u)<k/2$ or a pair of vertices $u,v\in G$ with $\deg(u)+\deg(v)<k$. 
	Then $e(H)>(k-1)|H|/2$.  
\end{obs}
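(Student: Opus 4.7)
The observation is a short arithmetic check in each of the two cases, and the only real content is that vertex degrees are integers. The plan is simply to double the hypothesis $e(G) > (k-1)n/2$ to get $2e(G) > (k-1)n$, subtract the number of edges incident to the deleted vertex (or pair), and compare the result with the target $2e(H) > (k-1)|H|$.

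In the single-vertex case, I would use integrality of $\deg(u)$ to upgrade the strict inequality $\deg(u) < k/2$ to $2\deg(u) \le k-1$. Subtracting the $\deg(u)$ lost edges (each counted twice in $2e(H)$) then yields
\[ 2e(H) \ =\ 2e(G) - 2\deg(u)\ >\ (k-1)n - (k-1)\ =\ (k-1)(n-1)\ =\ (k-1)|H|, \]
as required. In the two-vertex case, I would note that the number of edges removed when deleting $\{u,v\}$ is at most $\deg(u)+\deg(v)$ (equal to this if $uv\notin E(G)$, one less otherwise, which only helps the bound). Again using integrality to upgrade $\deg(u)+\deg(v) < k$ to $\deg(u)+\deg(v)\le k-1$, I obtain
\[ 2e(H)\ \ge\ 2e(G) - 2(\deg(u)+\deg(v))\ >\ (k-1)n - 2(k-1)\ =\ (k-1)(n-2)\ =\ (k-1)|H|. \]

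There is no substantive obstacle in the argument; the only point to watch is not losing the strict inequality when combining the strict inequality on $e(G)$ with the strict inequality on the degree(s). The integrality step $2\deg(u)\le k-1$ (resp.\ $\deg(u)+\deg(v)\le k-1$) is precisely what provides just enough slack for the final strict inequality to close.
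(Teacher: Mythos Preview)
Your proof is correct and essentially the same as the paper's: both cases are handled by the same arithmetic, subtracting the number of lost edges from $e(G)$ and comparing with $(k-1)|H|/2$. The paper's write-up leaves the integrality step implicit (it passes directly from $\deg(u)<k/2$ to subtracting $(k-1)/2$), whereas you make it explicit; otherwise the arguments are identical.
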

\begin{proof}
If we deleted one vertex $u$ as above, then
\[e(H)=e(G)-\deg(u)>\frac{1}{2}(k-1)|G|-\frac{k-1}{2}=\frac{1}{2}(k-1)|H|,\]
and if we deleted two vertices $u$ and $v$ as above, then
\[e(H)\ge e(G)-(\deg(u)+\deg(v))>\frac{1}{2}(k-1)|G|-(k-1)=\frac{1}{2}(k-1)|H|.\]
\end{proof}

Given a graph $G$ and a set $X\subseteq V(G)$, we define $\delta_{G}(X):=\min\limits_{x\in X}\{\deg_G(x)\}$. We now state and prove Lemma~\ref{lemma:bipartite}. 

\begin{lemma}
	\label{lemma:bipartite}
	Let $k$ and $r$ be positive integers with $r\le \lceil k/2\rceil$.  
	Let $H=(A,B)$ be a bipartite graph with $|A|=|B|$ and $e(H)>(k-1)|H|/2$. Then there exists a subgraph 
	$H'=(A',B')$ of $H$ satisfying the following: 
	\begin{enumerate}[label=(\arabic*)]
		\item \label{item:1} $e(H')>(k-1)|H'|/2$; 
		\item  \label{item:2} for each $a\in A'$ and $b\in B'$ we have $\deg_{H'}(a)+\deg_{H'}(b)\ge k$. 
	\end{enumerate}
	Moreover, $H'$ satisfies one of the two following conditions: \setcounter{enumi}{2}
		\begin{enumerate}[label=(3-\Roman*)] 
			\item \label{bip-i}$\delta_{H'}(A')\ge k/2$, $\delta_{H'}(B')\ge r$, 
			there is $a\in A'$ with $\deg_{H'}(a)\ge k$, and  $|A'|\le |B'|$; or,  
			\item \label{bip-ii}$\delta(H')\ge k/2$, there is $b\in B'$ with $\deg_{H'}(b)\ge k$, 
			and for each $a\in A'$ we have $\deg_{H}(a)>k-r$. 
		\end{enumerate}
\end{lemma}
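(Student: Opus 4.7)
The plan is to obtain $H'$ by iteratively reducing $H$ via Observation~\ref{obs:deleting}. Specifically, I would repeatedly perform one of the following as long as possible: delete a vertex $v$ with $\deg(v) < k/2$, or delete a pair $(a,b) \in A\times B$ with $\deg(a)+\deg(b) < k$. Each such operation preserves the density $e(\cdot) > (k-1)|\cdot|/2$ by Observation~\ref{obs:deleting}. When no further operation is possible, the resulting subgraph $H' = (A', B')$ automatically satisfies \ref{item:1} (density), \ref{item:2} (no violating pair remains), and $\delta(H') \geq k/2$ (no vertex of degree $< k/2$ remains).

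Next, I would use the density to locate a high-degree vertex. The inequality $e(H') > (k-1)(|A'|+|B'|)/2$ implies that the average $A'$-degree equals $e(H')/|A'| > (k-1)(|A'|+|B'|)/(2|A'|)$, which strictly exceeds $k-1$ whenever $|A'| \leq |B'|$. A symmetric calculation shows that whenever $|A'| \geq |B'|$ the average $B'$-degree exceeds $k-1$. Consequently, if $|A'| \leq |B'|$ some $a \in A'$ has $\deg_{H'}(a) \geq k$, and if $|A'| \geq |B'|$ some $b \in B'$ has $\deg_{H'}(b) \geq k$.

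In the case $|A'| \leq |B'|$, outcome~\ref{bip-i} would follow directly: the min-degree conditions $\delta_{H'}(A') \geq k/2$ and $\delta_{H'}(B') \geq k/2 \geq r$ are immediate from the reduction (recall $r \leq k/2$), a vertex $a \in A'$ with $\deg_{H'}(a) \geq k$ was just produced, and the size bound $|A'| \leq |B'|$ is given. In the case $|A'| > |B'|$, we have $\delta(H') \geq k/2$ and some $b \in B'$ with $\deg_{H'}(b) \geq k$, so it remains only to verify the last condition of~\ref{bip-ii}, namely that every $a \in A'$ satisfies $\deg_H(a) > k - r$.

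The main obstacle is securing this last condition when $|A'| > |B'|$. If some $a_0 \in A'$ had $\deg_H(a_0) \leq k - r$, it could not be removed from $H'$ by further applications of Observation~\ref{obs:deleting}: the bound $\deg_{H'}(a_0) \geq k/2$ would rule out a single-vertex deletion, while condition~\ref{item:2} would rule out a pair deletion involving $a_0$. To resolve this I would refine the reduction by adding a preprocessing phase that first deletes pairs $(a,b)$ with $\deg_H(a) \leq k - r$ and $\deg_H(b) < r$ (their degree sum is at most $k-1$, so Observation~\ref{obs:deleting} still applies), and only then iterates the main rules. Proving that this preprocessing—together with careful bookkeeping of how single-sided deletions affect the balance of $|A'|$ versus $|B'|$, leveraging the initial equality $|A|=|B|$—always lands in one of the two desired outcomes is the most delicate part of the argument.
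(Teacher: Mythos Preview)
Your reduction and the density counting are correct, and you have correctly isolated the only real difficulty: guaranteeing $\deg_H(a)>k-r$ for all surviving $a\in A'$ in the case $|A'|>|B'|$. However, your proposed fix does not close this gap. After your preprocessing stops, you are in one of two situations: either every remaining $a$ has $\deg_H(a)>k-r$, or every remaining $b$ has $\deg_H(b)\ge r$. In the second situation you then run your symmetric main phase (single deletions from either side, pair deletions), and nothing prevents you from ending with $|A'|>|B'|$ while some $a_0\in A'$ still has $\deg_H(a_0)\le k-r$. Such an $a_0$ need only satisfy $\deg_{H'}(a_0)\ge k/2$, which is compatible with $\deg_H(a_0)\le k-r$ since $r\le k/2$. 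Your ``careful bookkeeping'' remark gestures at a repair but does not supply one, and I do not see how the bare fact $|A|=|B|$ helps once single deletions are allowed on both sides.

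The paper's proof avoids this by making the first phase \emph{asymmetric}: single-vertex deletions are performed only on the $A$-side (together with pair deletions), so the intermediate graph $H_1=(A_1,B_1)$ automatically satisfies $|A_1|\le|B_1|$ and $\deg_{H_1}(a)\ge k/2$ for every $a\in A_1$. If additionally every $b\in B_1$ has degree at least $r$, one is already in case~\ref{bip-i}. Otherwise some $b_1\in B_1$ has $\deg_{H_1}(b_1)<r$, and the pair condition forces $\deg_{H_1}(a)>k-r$, hence $\deg_H(a)>k-r$, for \emph{every} $a\in A_1$. Only then does one run a second, symmetric phase (delete any vertex of degree below $k/2$) to obtain $\delta(H')\ge k/2$; the bound $\deg_H(a)>k-r$ is inherited by the surviving $A$-vertices, and one lands in~\ref{bip-i} or~\ref{bip-ii} according to which side is larger. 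The asymmetry of the first phase is the missing idea.
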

\begin{proof}
	We start by iteratively deleting some vertices of $H$ according to the following rules. First, if $\deg(u)< k/2$ for some vertex $u\in A$, we delete $u$. Second, if $\deg(u)+\deg(v)<k$
	for some vertices $u\in A$ and $v\in B$, we delete both. We stop once none of the rules can be applied anymore. Call the obtained graph  $H_1=(A_1,B_1)$.  Note that $H_1$ is a non-empty graph by Observation~\ref{obs:deleting}. By construction,
	\begin{enumerate}[label=(\roman*)]
	\item \label{eins} $\deg_{H_1}(a)\ge k/2$ for each $a\in A_1$, 
	\item \label{zwei}
	$\deg_{H_1}(a)+\deg_{H_1}(b)\ge k$ for each $a\in A_1$, $b\in B_1$, and
	\item \label{drei} $|A_1|\le |B_1|$. 
		\end{enumerate}  
	Furthermore,  it is easy to calculate that 
	\begin{enumerate}[label=(\roman*)]\setcounter{enumi}{3}
	\item \label{vier} $e(H_1)>(k-1)|H_1|/2$.  
		\end{enumerate}
	
	Observe that if  $\deg_{H_1}(b)\ge r$ for each vertex $b\in B_1$, then we can take $H':=H_1$. Indeed, conditions  \ref{item:1} and \ref{item:2} hold by~\ref{vier} and~\ref{zwei}, and $\delta_{H_1}(A_1)\ge k/2$ and $|A_1|\le |B_1|$ by~\ref{eins} and~\ref{drei}. 
	Moreover, because of~\ref{drei}, $|A_1|\le |H_1|/2$ and thus we can use~\ref{vier}
		to see that there is a vertex $a^*\in A_1$ with $\deg_{H_1}(a^*)\ge k$. 
	
	 So, from now on, we can  suppose that $\deg_{H_1}(b_1)<r$ for some vertex $b_1\in B_1$.  By~\ref{zwei}, we have that $\deg_{H_1}(a)\ge k-\deg_{H_1}(b_1)>k-r$ for each  $a\in A_1$.
	 We iteratively delete vertices from $H_1$ whose degree is less than $k/2$ until there are no such vertices. 
		Let $H'=(A',B')$ denote the obtained subgraph of $H_1$. 
		By construction,  for each $a\in A'$ we have $\deg_{H}(a)\ge \deg_{H_1}(a)> k-r$. Also by construction, we know that $\delta(H')\ge k/2$. 
		In particular, \ref{item:2} holds, and it is easy to calculate that \ref{item:1} holds. 
		So,  if $|A'|>|B'|$ then there is a vertex $b\in B'$ with $\deg_{H'}(b)\ge k$ and thus \ref{bip-ii} holds.  
		On the other hand, 
		if $|A'|\le |B'|$, then there is $a\in A'$ with $\deg_{H'}(a)\ge k$, 
		and so \ref{bip-i} holds. 
		\end{proof}

We are now ready to state and prove the main result of this section:
\begin{lemma}
	\label{cor:subdigraph}
	Let $k$, $r$ and $n$ be positive integers with $r\le \lceil k/2\rceil$ and $k\le n$. Let $D$ be a digraph of order $n$ and size $a(D)>(k-1)n$. 
	Then $D$ has a subdigraph $D'$ such that
		\begin{enumerate}[label=(\arabic*)]  
		\item \label{cor:1} $a(D')>(k-1)(|(D')^+|+|(D')^-|)/2$; 
		\item \label{cor:2} $\deg_{D'}^+(a)+\deg_{D'}^-(b)\ge k$ for each $a\in (D')^+$ and $b\in (D')^-$. 
	\end{enumerate}
	Additionally, $D'$ satisfies one of the following conditions: 
	\begin{enumerate}[label=(3-\Roman*)]
		\item \label{cor-subdigraph-i} 
		$\overline{\delta^+}(D')\ge k/2$, $\overline{\delta^-}(D')\ge r$, 
		there is $a\in V(D')$ with $\deg^+_{D'}(a)\ge k$, and  
		$|(D')^+|\le |(D')^-|$; or, 
		\item \label{cor-subdigraph-ii} 
		$\overline{\delta^0}(D')\ge k/2$, there is $b\in V(D')$ with 
		$\deg^-_{D'}(b)\ge k$, and for each $a\in (D')^+$ we have $\deg^+_{D}(a)> k-r$.
	\end{enumerate}
\end{lemma}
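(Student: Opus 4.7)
The plan is to reduce the statement to Lemma~\ref{lemma:bipartite} via a standard bipartite encoding of the digraph~$D$. Build a bipartite graph $G=(A,B)$ as follows: take $A=\{u_A : u\in V(D)\}$ and $B=\{u_B : u\in V(D)\}$, two disjoint copies of $V(D)$, and for each arc $uv\in A(D)$ put an edge between $u_A$ and $v_B$ in~$G$. Then $|A|=|B|=n$, so $|G|=2n$, and $e(G)=a(D)>(k-1)n=(k-1)|G|/2$. Moreover, the degree of $u_A$ in~$G$ equals $\deg^+_D(u)$ and the degree of $u_B$ in~$G$ equals $\deg^-_D(u)$, which will make the translation of degree conditions straightforward.

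Next, apply Lemma~\ref{lemma:bipartite} to $G$ with the given parameter~$r$ (the hypothesis $r\le k/2$ holds since $2r\le k$). This yields a subgraph $H'=(A',B')$ of $G$ satisfying \ref{item:1} and \ref{item:2}, and satisfying either \ref{bip-i} or~\ref{bip-ii}. From $H'$ define $D'$ by setting
\[
V(D'):=\{u\in V(D): u_A\in A'\text{ or }u_B\in B'\}, \qquad A(D'):=\{uv : u_Av_B\in E(H')\}.
\]
Since both in~\ref{bip-i} and~\ref{bip-ii} every vertex in $A'$ has degree at least $k/2\ge 1$ and every vertex in $B'$ has degree at least $\min\{r,k/2\}\ge 1$, we have $(D')^+=\{u : u_A\in A'\}$ and $(D')^-=\{u : u_B\in B'\}$; in particular $|(D')^+|+|(D')^-|=|A'|+|B'|=|H'|$, and for all $u\in (D')^+$, $v\in (D')^-$,
\[
\deg^+_{D'}(u)=\deg_{H'}(u_A),\qquad \deg^-_{D'}(v)=\deg_{H'}(v_B).
\]

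With these identities, every condition of Lemma~\ref{cor:subdigraph} transcribes directly from the corresponding condition in Lemma~\ref{lemma:bipartite}: condition~\ref{cor:1} is \ref{item:1}, condition~\ref{cor:2} is~\ref{item:2}, and each of \ref{cor-subdigraph-i}, \ref{cor-subdigraph-ii} comes from \ref{bip-i}, \ref{bip-ii} respectively (noting that $\overline{\delta^+}(D')\ge k/2$ is exactly $\delta_{H'}(A')\ge k/2$, since a vertex $u$ with $\deg^+_{D'}(u)=0$ is not in $(D')^+$; and likewise for the other pseudo-degrees; and noting that $\deg^+_D(u)\ge \deg_H(u_A)>k-r$ gives the last clause of~\ref{cor-subdigraph-ii}). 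The whole argument is essentially a dictionary, so I do not expect a real obstacle; the only point requiring a moment of care is making sure the in-/out-pseudo-degree conditions correspond correctly to the one-sided minimum-degree conditions on $A'$ and~$B'$, which works precisely because vertices of zero out-degree (resp.\ in-degree) in~$D'$ are excluded from $(D')^+$ (resp.\ $(D')^-$) by definition.
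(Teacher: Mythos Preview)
Your proposal is correct and follows exactly the paper's own approach: encode $D$ as a bipartite graph on two copies of $V(D)$, apply Lemma~\ref{lemma:bipartite}, and translate back via the dictionary $\deg^+_{D'}(u)=\deg_{H'}(u_A)$, $\deg^-_{D'}(v)=\deg_{H'}(v_B)$. If anything, you are slightly more careful than the paper in verifying $(D')^+=\{u:u_A\in A'\}$ and $(D')^-=\{u:u_B\in B'\}$; one trivial slip is that the last clause of~\ref{cor-subdigraph-ii} should read $\deg^+_D(u)=\deg_G(u_A)>k-r$ (degree in the original bipartite graph~$G$, which is an equality, not just an inequality).
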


\begin{proof}
	We construct a bipartite graph $G$ as follows. For each vertex $u\in D$, we define two vertices 
	$u^+$ and $u^-$ in $G$, with $u^+$  adjacent to $v^-$ whenever $uv$ is an arc in $D$. 
	Set \[V^+:=\{v^+:v\in D\} \quad \text{and} \quad V^-:=\{v^-:v\in D\}.\] Note that $G=(V^+,V^-)$ is a bipartite graph
	of order $2n$ and size $e(G)=a(D)>(k-1)|G|/2$. 
	By Lemma~\ref{lemma:bipartite}, there is a subgraph $H$ of $G$ satisfying conditions~\ref{item:1}~and~\ref{item:2}, and 
	one of \ref{bip-i} or \ref{bip-ii}.
	
	Define subdigraph $D'$ of $D$ as follows: we let $v$ be in $D'$ whenever $v^+\in H$ or $v^-\in H$, 
	and we let $uv$ be an arc of $D'$ whenever $u^+v^-$ is an edge in $H$. 
	It is readily seen that 
	conditions \ref{cor:1} and \ref{cor:2} of Lemma~\ref{cor:subdigraph} hold, as well as one of~\ref{cor-subdigraph-i}, \ref{cor-subdigraph-ii}.
\end{proof}

%%%%%%%%%%%%%%%%%%%%%%%
%%%%%%%%%%%%%%%%%%%%%%%
%%%%%%%%%%%%%%%%%%%%%%%
%%%%%%%%%%%%%%%%%%%%%%%

\section{Embedding  $T$ when $\Delta_2\le \lfloor k/4\rfloor +2$}
\label{sec:Delta2-small}

In this section, we will prove Lemma~\ref{lemma:onepointfive}. 
This proof splits into  two lemmas, Lemma~\ref{lemma:first} and  Lemma~\ref{lemma:second}. Both lemmas are stated in Subsection~\ref{sec:onepointfive}, together with a third auxiliary result, Lemma~\ref{lemma:k/4neighbors}, which will turn out to be very useful in the remainder of the paper. The other two subsections of this sections are devoted to the proofs of Lemma~\ref{lemma:first} and  Lemma~\ref{lemma:second}.

\subsection{Proof of Lemma~\ref{lemma:onepointfive}}\label{sec:onepointfive}

Lemma~\ref{lemma:onepointfive} follows immediately from the following two lemmas.
\begin{lemma}
	\label{lemma:first}
	Let $T$ be a $k$-arc antidirected tree, and let $D$ be a \ksf digraph with more than $(k-1)|V(D)|$ arcs, where   $s=\lceil k/12 \rceil$.
	If $\Delta (T)\le \lfloor k/4\rfloor$, then $T$ embeds in $D$. 
\end{lemma}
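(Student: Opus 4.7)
The plan is to pass to a subdigraph with controlled pseudo-semidegree, attempt a greedy embedding, and use the \ksf hypothesis to repair any obstruction. First, apply Lemma~\ref{lempseu} to obtain a subdigraph $D'\subseteq D$ with $\overline{\delta^0}(D')\ge \lceil k/2\rceil$; observe that $D'$ inherits the \ksf property from $D$, and since $\Delta(T)\le \lfloor k/4\rfloor<k/2$, every non-isolated vertex of $D'$ has, in principle, enough out- or in-neighbours to host all the children of any vertex of $T$. Root $T$ at a leaf $r$, fix a BFS ordering $v_0=r,v_1,\dots,v_k$ of $V(T)$, and attempt a greedy embedding $f_1$: map $v_0$ to any vertex of suitable sign, and for $i\ge 1$ send $v_i$ to an unused vertex of $N^{\re{p_{v_i}}}_{D'}(f_1(p_{v_i}))$. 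If this procedure completes, we are done.

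Otherwise, let $i^*$ be the first failing step, and set $w':=v_{i^*}$, $w:=p_{w'}$, $T_1:=T[\{v_0,\dots,v_{i^*-1}\}]$, and $S_1:=f_1(V(T_1))$. Failure means $N^{\re{w}}_{D'}(f_1(w))\subseteq S_1$, and since $|N^{\re{w}}_{D'}(f_1(w))|\ge \lceil k/2\rceil$ while $|S_1|\le k$, there are many candidates for $w'$ already in use. Pick $\tilde y\in V(T_1)$ at maximum BFS-depth among those whose image lies in $N^{\re{w}}_{D'}(f_1(w))$; by maximality, $\tilde y$ has no embedded descendants and can be safely unembedded. Set $f_2(w'):=f_1(\tilde y)$ and resume the greedy procedure, producing a new partial embedding $f_2$. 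If $f_2$ completes, we are done.

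Otherwise a second failure occurs at some $z'\in V(T)$ with parent $z$, yielding $N^{\re{z}}_{D'}(f_2(z))\subseteq S_2$ for the current image set $S_2$, with $|N^{\re{z}}_{D'}(f_2(z))|\ge \lceil k/2\rceil$. This is the crucial juncture: via Lemma~\ref{lemma:k/4neighbors}, the \ksf hypothesis prevents a third vertex of $D'$ from also dumping $\ge s=\lceil k/12\rceil$ neighbours of prescribed sign into $S_1\cup S_2$, since together with $f_1(w)$ or $f_2(z)$ it would produce one of the three forbidden orientations of $K_{2,s}$. Consequently, every vertex of $D'$ distinct from $f_1(w)$ and $f_2(z)$ has at most $s-1$ out-neighbours and at most $s-1$ in-neighbours inside the used set $S_1\cup S_2$.

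The main obstacle, and the final step, is leveraging this abundance of unused capacity to actually re-embed $T$. The strategy is to relocate the subtree of $T$ originating at $\tilde y$ (or at $z'$) into the part of $D'$ outside the used set, exploiting that images of vertices of $T$ other than $f_1(w)$ and $f_2(z)$ have generously many unused neighbours in $D'$. The degree bound $\Delta(T)\le \lfloor k/4\rfloor$ guarantees that at each relocation step we require only few fresh neighbours of the correct sign, comfortably less than the supply $\ge k/2-(s-1)$ available outside $S_1\cup S_2$. Careful bookkeeping of signs—distinguishing out-vertices from in-vertices of $T$—and of the two blocked positions $w,z$ is needed, but once performed, a complete embedding of $T$ into $D'\subseteq D$ is obtained.
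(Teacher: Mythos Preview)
Your high-level strategy matches the paper's: pass to $D'$ with $\overline{\delta^0}(D')\ge\lceil k/2\rceil$, take a partial embedding, locate an obstruction at some $w$, swap a vertex to absorb $w'$, find a second obstruction at some $z$, and then use the $\mathcal K_{2,s}$-freeness together with Lemma~\ref{lemma:k/4neighbors} to finish. However, several steps in your execution are incorrect.

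The central error is your use of the $\mathcal K_{2,s}$-free hypothesis. You assert that ``every vertex of $D'$ distinct from $f_1(w)$ and $f_2(z)$ has at most $s-1$ out-neighbours and at most $s-1$ in-neighbours inside the used set $S_1\cup S_2$'', justified by saying that otherwise a forbidden $K_{2,s}$ would appear with $f_1(w)$ or $f_2(z)$. This is false: the $\mathcal K_{2,s}$-free property bounds $|N^{\star_a}(a)\cap N^{\star_b}(b)|$, not $|N^{\star_c}(c)\cap S|$ for an arbitrary set $S$. A third vertex $c$ may well have many neighbours in $S_2$ that are \emph{not} neighbours of $f_1(w)$ or $f_2(z)$. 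What Lemma~\ref{lemma:k/4neighbors} actually yields, given two vertices each contributing $\ge\lceil k/2\rceil$ neighbours into a set of size $\le k$, is only that any third vertex has fewer than about $k/4$ neighbours there---not $s-1\approx k/12$. With the correct bound your ``comfortably less than $k/2-(s-1)$'' margin collapses to essentially $k/4$, which is exactly $\Delta(T)$; the slack you rely on for the vague relocation step disappears.

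There are further gaps. First, your claim that $\tilde y$ ``has no embedded descendants'' does not follow from choosing it at maximum BFS-depth among vertices whose image lies in $N^{\re{w}}_{D'}(f_1(w))$: descendants of $\tilde y$ may be embedded with images outside that neighbourhood. Second, you do not exclude $z=w$; after the swap the very next BFS child of $w$ may again fail, since $N^{\re{w}}(f_1(w))\subseteq S_1\subseteq S_2$. The paper avoids both issues by rooting $T$ at $w$ (not at a leaf), taking a \emph{maximal} embedded subtree rather than a BFS-greedy one, choosing the swap vertex $y$ at maximum distance from $w$ so that removing $y$ keeps $N^+(f(w))$ inside the retained component, and then running a further maximality argument over a family $\mathcal F$ of extensions to force $z_F\neq w$ and to obtain the contradiction via the correct $k/4$ bound. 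Your final ``relocate the subtree'' paragraph would need to be replaced by an argument of this kind.
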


\begin{lemma}
	\label{lemma:second}
	Let $T$ be a $k$-arc antidirected tree, and let $D$ be a \ksf digraph with more than $(k-1)|V(D)|$ arcs, where   $s=\lceil k/12 \rceil$.
	If  $\Delta (T)>\lfloor k/4\rfloor $ and $\Delta_2 (T)\le \lfloor k/4\rfloor+2$, 
	then $T$ embeds in~$D$. 
\end{lemma}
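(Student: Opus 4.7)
My plan is to apply Lemma~\ref{cor:subdigraph} to $D$ with the parameter $r:=\min\{\lceil k/2\rceil,\,k-\Delta(T)\}$ and then branch according to which of the two possible outcomes~\ref{cor-subdigraph-i} or~\ref{cor-subdigraph-ii} is delivered. By reversing all arcs of both $T$ and $D$ simultaneously if necessary, we may assume that some vertex $v_1$ realising $\deg_T(v_1)=\Delta(T)>\lfloor k/4\rfloor$ is an out-vertex of $T$, so $\deg^+_T(v_1)=\Delta(T)$. Because $T$ is antidirected and $\Delta_2(T)\le \lfloor k/4\rfloor+2$, every vertex of $T$ other than $v_1$ has total degree strictly less than $k/2$ for the relevant range of $k$, and this will be the main structural slack exploited throughout.

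If the outcome of Lemma~\ref{cor:subdigraph} is~\ref{cor-subdigraph-i}, then there is $a\in V(D')$ with $\deg^+_{D'}(a)\ge k\ge \Delta(T)$, together with $\overline{\delta^+}(D')\ge k/2$ and $\overline{\delta^-}(D')\ge r$. I would embed $v_1\mapsto a$, inject the $\Delta(T)$ out-neighbours of $v_1$ into out-neighbours of $a$, and extend the embedding greedily layer by layer. At each subsequent step, the vertex of $T$ being placed has total degree at most $\lfloor k/4\rfloor+2$, so it suffices to locate an unused in- or out-neighbour of its parent's image. When $r\ge \lceil k/2\rceil$, the bound $\overline{\delta^0}(D')\ge k/2$ supplies more than enough room. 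When $r=k-\Delta(T)$, the subtree $T\setminus N_T^+[v_1]$ carries fewer than $k-\Delta(T)$ arcs, and a straightforward counting argument verifies that the greedy procedure cannot exhaust the pseudo-in-degree $r$ at any image, because the total number of vertices needed outside $N_{D'}^+[a]$ stays below the available reservoir.

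If instead the outcome is~\ref{cor-subdigraph-ii}, we have $\overline{\delta^0}(D')\ge k/2$, but the vertex of $D'$ of degree at least $k$ is of the wrong orientation to host $v_1$: it has large in-degree rather than out-degree. This is the genuinely hard subcase, to be handled by a dedicated auxiliary result (the forthcoming Lemma~\ref{lem:case3b} of Section~\ref{sec:lemma-second}). The strategy announced in Section~\ref{sec:overview} is to first embed the subtree $T^{(2)}$ consisting of all vertices of $T$ at distance at most two from $v_1$, and then inductively extend to all remaining branches. Both the placement of $T^{(2)}$ and the extension step will depend decisively on the $\mathcal K_{2,s}$-freeness of $D$, which yields tight control (through a further lemma of the paper, Lemma~\ref{lemma:k/4neighbors}) on how many candidate images can share large common neighbourhoods. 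The main obstacle of the entire proof is therefore this subcase: because no vertex of $D'$ is guaranteed to have out-degree as large as $\Delta(T)$, embedding the high-degree vertex $v_1$ together with the whole of $T^{(2)}$ requires a careful case analysis that exploits $\mathcal K_{2,s}$-freeness throughout, and only after $T^{(2)}$ has been successfully placed can one use the uniform bound $\overline{\delta^0}(D')\ge k/2$ to complete the embedding greedily on the remaining low-degree branches.
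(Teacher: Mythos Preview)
Your overall setup --- reversing orientations so that the vertex $v_1$ of degree $\Delta(T)$ is an out-vertex, and applying Lemma~\ref{cor:subdigraph} with $r=\min\{\lceil k/2\rceil,\,k-\Delta(T)\}$ --- matches the paper exactly. You also correctly identify Lemma~\ref{lem:case3b} as the place where the real work happens. But there is a genuine gap in how you handle outcome~\ref{cor-subdigraph-i}, and you have the relative difficulty of the two outcomes inverted.

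In outcome~\ref{cor-subdigraph-ii}, every $a\in (D')^+$ satisfies $\deg_D^+(a)>k-r\ge \Delta(T)$ and $\overline{\delta^0}(D')\ge k/2$, so Lemma~\ref{lem:case3b} applies \emph{immediately}; this is the easy branch, not the hard one. The obstacle you describe (``no vertex of $D'$ is guaranteed to have out-degree as large as $\Delta(T)$'') is not an obstacle at all, because Lemma~\ref{lem:case3b} only asks for $\deg_D^+(a)\ge\Delta(T)$, not $\deg_{D'}^+(a)\ge\Delta(T)$.

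The real issue is outcome~\ref{cor-subdigraph-i}. Your greedy argument does not work. Even when $r=\lceil k/2\rceil$, having $\overline{\delta^0}(D')\ge k/2$ is \emph{not} enough to push a naive layer-by-layer embedding through: after embedding $N^+[v_1]$ and some further vertices you may have used well over $k/2$ vertices of $D'$, and the $\lceil k/2\rceil$ available in- or out-neighbours of the current host can all be occupied. (This is exactly why the proof of Lemma~\ref{lemma:first} is not a greedy argument either, but relies on the $\mathcal K_{2,s}$-freeness.) When $r=k-\Delta<\lceil k/2\rceil$ the situation is worse: after placing $v_1$ and its $\Delta$ out-neighbours you have already occupied $\Delta+1>k/2$ vertices, while an in-vertex image may have only $r=k-\Delta$ in-neighbours, all of which can lie among the images just used. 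The ``straightforward counting argument'' you allude to does not exist.

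What the paper actually does in outcome~\ref{cor-subdigraph-i} is to invoke Lemma~\ref{lem:case3b} here as well. If $r=\lceil k/2\rceil$, then $\overline{\delta^0}(D')\ge \lceil k/2\rceil$ and there is $a$ with $\deg_{D'}^+(a)\ge k$, so Lemma~\ref{lem:case3b} applies directly. If $r=k-\Delta<\lceil k/2\rceil$, one cannot apply Lemma~\ref{lem:case3b} to $T$ because $\overline{\delta^-}(D')$ is too small; instead one strips $\Delta-r$ leaves off $v_1$ to obtain a tree $T^\star$ with $k':=2r$ arcs in which $v_1$ has degree $r=k'/2$, applies Lemma~\ref{lem:case3b} to $T^\star$ (now $\overline{\delta^0}(D')\ge r=k'/2$ suffices), and finally reinstates the removed leaves using $\deg_{D'}^+(a)\ge k$. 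This leaf-removal trick is the missing idea in your treatment of~\ref{cor-subdigraph-i}.
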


We will prove Lemma~\ref{lemma:first} in Subsection~\ref{sec:lemma-first} and Lemma~\ref{lemma:second} in Subsection~\ref{sec:lemma-second}. 

We finish this subsection by showing a quick auxiliary result, Lemma~\ref{lemma:k/4neighbors} below, which will be useful in both Subsection~\ref{sec:lemma-first} and Subsection~\ref{sec:lemma-second}.

\begin{lemma}
	\label{lemma:k/4neighbors}
	Let $D$ be a \ksf digraph, with $s=\lceil k/12\rceil$, and let $S$ be a set of at most $k$ vertices of $D$. Then, 
	for any three vertices $a,b,c\in D$ and $\star_a,\star_b,\star_c\in \{+,-\}$
	we have
	\[|N^{\star_a}(a)\cap S|+|N^{\star_b}(b)\cap S|+|N^{\star_c}(c)\cap S|<  {5k}/{4}.
	\] 
\end{lemma}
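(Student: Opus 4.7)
The plan is to bound each pairwise intersection $|N^{\star_i}(i)\cap N^{\star_j}(j)|$ using the $\mathcal K_{2,s}$-freeness hypothesis, and then to derive the sum bound by a simple double-counting on $S$. Throughout, write $A_i:=N^{\star_i}(i)\cap S$ for $i\in\{a,b,c\}$, and assume the three vertices $a,b,c$ are pairwise distinct.

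The first step is to observe that, for any two distinct vertices $x,y\in D$ and any signs $\star_x,\star_y\in\{+,-\}$,
$$|N^{\star_x}(x)\cap N^{\star_y}(y)|<s.$$
Indeed, if this intersection had size at least $s$, then $x,y$ together with $s$ common neighbours would realise one of the three forbidden orientations of $K_{2,s}$ from Figure~\ref{fig:K_{2,s}}: the both-out version if $\star_x=\star_y=+$, the both-in version if $\star_x=\star_y=-$, and the mixed version if $\star_x\neq\star_y$. In each case this would contradict $\mathcal K_{2,s}$-freeness. Applying this to $\{i,j\}\subseteq\{a,b,c\}$ yields $|A_i\cap A_j|\le s-1$ for all three pairs.

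Next, partition $S$ into sets $S_0,S_1,S_2,S_3$, where $S_\ell$ is the set of vertices of $S$ lying in exactly $\ell$ of the sets $A_a,A_b,A_c$. A direct count gives
$$|A_a|+|A_b|+|A_c|=|S_1|+2|S_2|+3|S_3| \quad\text{and}\quad |A_a\cap A_b|+|A_a\cap A_c|+|A_b\cap A_c|=|S_2|+3|S_3|.$$
Rewriting $|S_1|+2|S_2|+3|S_3|=(|S_1|+|S_2|+|S_3|)+(|S_2|+2|S_3|)$, using $|S_1|+|S_2|+|S_3|\le|S|\le k$, and using the step above, we obtain
$$|A_a|+|A_b|+|A_c|\le |S|+(|S_2|+3|S_3|)\le k+3(s-1).$$

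It remains to check $k+3(s-1)<5k/4$, i.e.\ $12s<k+12$. Since $s=\lceil k/12\rceil$, we have $12s\le k+11<k+12$, so the strict inequality holds. I do not expect any real obstacle: the only point that requires a little care is the verification that each sign pattern on a pair $\{i,j\}$ genuinely produces one of the three forbidden $K_{2,s}$-orientations, which is handled in the first step.
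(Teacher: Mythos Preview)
Your proof is correct and follows essentially the same approach as the paper's: both first extract the pairwise bound $|N^{\star_x}(x)\cap N^{\star_y}(y)|\le s-1$ from $\mathcal K_{2,s}$-freeness, and then combine this with $|S|\le k$ via an inclusion--exclusion style count to obtain the bound $5k/4$. Your partition into $S_0,\dots,S_3$ is just a repackaging of the paper's direct union bound $|A_a\cup A_b\cup A_c|\ge |A_a|+|A_b|+|A_c|-3(s-1)$, and your explicit verification that $12s\le k+11<k+12$ matches the paper's use of $s-1<k/12$; the assumption that $a,b,c$ are pairwise distinct is implicit in the paper as well.
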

\begin{proof}
	Given that $D$ is \ksff, we have that $$|N^{\star_x}(x)\cap N^{\star_y}(y)|\le s-1=\lceil k/12 \rceil -1< k/12$$ for any $x,y\in \{a,b,c\}$. 
	Therefore, 
	\begin{align*}
		k\ge |S|&\ge |\left(N^{\star_a}(a)\cap S\right)\cup \left(N^{\star_b}(b)\cap S\right)\cup \left(N^{\star_c}(c)\cap S\right)|\\ 
		&> |N^{\star_a}(a)\cap S|+|N^{\star_b}(b)\cap S|+|N^{\star_c}(c)\cap S|-3\cdot k/12,\end{align*}
	which directly  implies the result. 
\end{proof}

\subsection{Proof of Lemma~\ref{lemma:first}}
\label{sec:lemma-first}
We start by observing that by Lemma~\ref{lempseu}, $D$ has a subdigraph $D'$ with  
\begin{equation}\label{mindeg63}\overline{\delta^0}(D')\ge \lceil k/2\rceil.\end{equation}
We will embed $T$ in $D'$. Note that since 
\begin{equation}\label{Delta63}\Delta:=\Delta(T)\le \lfloor k/4\rfloor.\end{equation}
and because of~\cref{mindeg63}, any subtree of $T$ with at most $\lceil k/2\rceil +1$ vertices can be embedded greedily in~$D'$.
Let~$T'$ be a subtree of $T$ of maximal order such that there is an embedding  $f:V(T')\rightarrow V(D')$. Then $|T'|\ge  \lceil k/2\rceil+1$. 

If $T'=T$ then we are done, so let us assume that $T'\ne T$. 
Choose $w\in T'$ and $w'\in V(T)\setminus V(T')$ such that there is an arc of $T$ containing $w$ and $w'$. We may assume that this arc is directed from $w$ to $w'$, meaning that $w$ is an out-vertex (the other case is analogous).	

From now on, we shall see $T$ as a rooted tree with $w$ as its root. 
We recall that $p_x$ denotes the parent of a vertex $x\ne w$. 
The maximality of $T'$ implies that 
$N^{+}(f(w))\subseteq f(V(T'))$. Further,  $\deg^+(f(w))\ge \lceil k/2\rceil$ by~\cref{mindeg63}, and  $\deg^+(w)\le \lfloor k/4\rfloor$ by~\cref{Delta63}, and thus, 
\[Y:=f^{-1}\left(  N^{+}(f(w))\right)\setminus N^+(w)\] contains at least $\lfloor k/4\rfloor$ vertices.	Observe that for any
$y\in Y$, we have $\dist(w,y)\ge 2$. 
Choose $y\in Y$ obeying the following two conditions:
\begin{enumerate}[label*=$(\alph*)$]
	\item $\dist(w,y)$ is maximised; and
	\item  if $\dist(w,y')=2$ for each $y'\in Y$, then   $N^{-}(f(p_{y}))\setminus f(V(T'))\neq \emptyset$. 
\end{enumerate}
Condition $(b)$  is feasible  since, if $\dist(w,y')=2$ for each $y'\in Y$, then by~\cref{Delta63} and given that $|Y|\ge \lfloor k/4\rfloor$, there are distinct $y_1,y_2\in Y$ with $p_{y_1}\ne p_{y_2}$. 
Then	 Lemma~\ref{lemma:k/4neighbors} applied  to vertices $f(p_{y_1}), f(p_{y_2})$ and $f(w)$ gives that one of $y_1,y_2$ can be chosen as $y$ in $(b)$.
Note that $y$ is an out-vertex if condition $(b)$ holds; however, we do not make any assumption on whether $y$ is an in-vertex or an out-vertex in general. 

Our goal now is to identify a subtree $T_F$ of $T$ that includes vertex  $w$ and a vertex $z$ (which we will determine in the process), and for which there exists an embedding $g_F$ of $T_F$ into $D'$.  A critical property of this subtree is that the vertices $g_F(w)$ and $g_F(z)$ together send at least $k-1$ arcs to $\im g_F$ (a set of at most $k$ vertices). By applying Lemma~\ref{lemma:k/4neighbors}, we then deduce that any other vertex $a\in V(D')\setminus \{g_F(w),g_F(z)\}$ sends fewer than $k/4+1$ arcs to $\im g_F$, and thus has at least $\lfloor k/4\rfloor -1$ neighbours outside $\im g_F$. This key observation leads to a contradiction in the choice of $T_F$. 

Let $C$ be the component of $T'\setminus \{y\}$ that contains $w$. Our choice of $y$ ensures that \begin{equation}\label{N+fw63}N^{+}(f(w))\subseteq f(C\cup \{y\}).\end{equation} 
Set 
\[C':=\begin{cases}
	C\cup \{w', y\} & \text{if 
		$\dist(w,y)=2$, }\\
	C\cup \{w'\} & \text{otherwise.}
\end{cases}\]
Let~$\mathcal{F}$ be the set of all subtrees $T_F$ of $T$ containing $C'$ such that there is an embedding $g_F:V(T_F)\rightarrow V(D')$ with 
\begin{itemize}
	\item  $g_F(w')=f(y)$, and
	\item	 $g_F(t)=f(t)$ for each $t\in C\setminus \{p_y\}$. 
\end{itemize}
Note that $T_F=C'\in \mathcal{F}$, as we can set $g_F(p_y)=f(p_y)$, and additionally, $g_F(y)\in N^{-}(f(p_{y}))\setminus \im f$ if $y\in V(C')$, 
which is feasible due to condition $(b)$.  
Therefore,  $\mathcal{F}\ne \emptyset$.

Let $T_F\in\mathcal F$ be a maximal element. 
By maximality of $T'$ and since $w'\in V(T_F)\setminus V(T')$, we know that there is a vertex~$z_F\in V(T_F)\setminus\{w\}$ whose neighbourhood in $T'$ is not completely contained in $T_F$. 
Choose $T_F$ and $z_F$ such that $\dist_T(w,z_F)$ is maximised. We will assume that $z_F$ is an out-vertex, as the other case is analogous (just change all signs at $z_F$ and $g_F(z_F)$ to $-$). 

By the maximality of $T_F$ we have
\begin{equation}
	\label{eq:neighborhood-z_i}
	N^{+}(g_F(z_F))\subseteq g_F(V(T_F)). 
\end{equation}

We claim that \begin{equation}
	\label{pzinotw}
	p_{z_F}\neq w.
\end{equation}
Indeed, by our choice of $C'$, we know that $z_F$ is either $y$ or a descendant of $y$ if $\dist(w,y)=2$. Moreover, $z_F$ is either $p_y$ or a descendant of $p_y$ if $\dist(w,y)\ge 3$. This proves~\cref{pzinotw}.

Because of~\cref{N+fw63}, we know that \[N^{+}(g_F(w))=N^{+}(f(w))\subseteq g_F(C').\]
It may occur that $z_F=p_y$ and $f(p_y)\in N^+(g_F(w))\setminus \im g_F$. Thus, using~\cref{mindeg63}, we deduce that 
\begin{equation}
	\label{eq:neighborhood-w}
	|N^{+}(g_F(w))\cap  \im g_F|\ge \lceil k/2\rceil-1. 
\end{equation}

Now, \cref{mindeg63} combined with~\cref{eq:neighborhood-z_i}, and~\cref{eq:neighborhood-w} provide us with two vertices, $g_F(w)$ and $g_F(z_F)$, such that the sum of their out-degrees  into $\im g_F$ (a set of at most $k$ vertices) is at least $k-1$. By our choice of $z_F$, these two vertices are distinct. So, we can use Lemma~\ref{lemma:k/4neighbors} to see that for every vertex $v\in V(D')\setminus \{g_F(w),g_F(z_F)\}$ 
\begin{equation}\label{allhappy63}
	|N^\star(v)\setminus \im g_F|\ge \lfloor k/4\rfloor-1,\textrm{\, where $\star\in \{+,-\}$ is such that $\deg^\star(v)\ge  \lceil k/2\rceil$.}
\end{equation}
In particular this holds for $g_F(p_{z_F})$, which by~\cref{pzinotw} is different from~$w$. So we can choose a vertex $b\in N^{-}(g_F(p_{z_F}))\setminus \im g_F$. By~\cref{allhappy63},
\begin{equation*}
	\label{eq:image-z_1}
	X:=N^{+}(b)\setminus \im g_F 
\end{equation*}
contains at least $\lfloor k/4\rfloor -1$ vertices. 
Let $T'_F$ be obtained from $T_F$ by deleting  all descendants of $z_F$. Adding to $T'_F$ the set $N_F$ of all children of $z_F$ in $T$, we obtain the tree $T''_F$. Observe that $|N_F|\le \lfloor k/4\rfloor -1$ because of \cref{Delta63}.

We change $g_F$ to an embedding $g$ of $T''_F$ in $D'$ as follows:
$g(t)=g_F(t)$ for each $t\in V(T''_F-(\{z_F\}\cup N_F))$, $g(z_F)=b$ 
and $g(N_F)\subseteq X$. 
This is feasible because of~\cref{Delta63} and our choice of $X$. Next, we extend $T''_F$ to a maximal subtree $T''$ of $T$ such that 
$g$ extends to an embedding of $T''$.   Since 
$C'\subseteq V(T'')$ (as both $y$ and $p_y$ are in $T''$), $g(w')=f(y)$ and $g(t)=f(t)$ for each $t\in C\setminus \{p_y\}$, 
it follows that $T''\in \mathcal{F}$. Moreover, by~\cref{allhappy63}, we know that $z_{T''}$ must either belong to $N_F$ or be a descendant of a vertex in $N_F$, as in $g_F$ (and thus in $g$) all other vertices that are not descendants of $z_F$ (except possibly $w$) have all their neighbours embedded. 
Therefore,  $\dist(w,z_{T''})>\dist(w,z_F)$, which contradicts the choice
of $T_F$ and $z_F$, and this in turn concludes the proof.

\subsection{Proof of Lemma~\ref{lemma:second}}
\label{sec:lemma-second}

We will need the following auxiliary lemma. 

\begin{lemma}\label{lem:case3b}
	Let $T$ be a $k$-arc antidirected tree with $\Delta_2(T)\le \lfloor k/4\rfloor +2$ whose maximum degree vertex $u$ fulfills $\deg^+ (u)>\lfloor k/4\rfloor$. Let $D$ be a \ksf digraph,  with $s=\lceil k/12\rceil$, and let $D'\subseteq D$  with $\overline{\delta^0}(D')\ge  \lceil k/2\rceil$. Let $a\in V(D')$ with $\deg_{D}^+(a)\ge \Delta (T)$.  Then~$T$ embeds in $D$, with $u$ embedded in~$a$.
\end{lemma}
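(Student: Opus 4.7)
My plan is to root $T$ at $u$ and construct an embedding $f\colon V(T)\to V(D)$ layer by layer, starting with $f(u)=a$. Since $\deg^+(u)>0$ and $T$ is antidirected, $u$ is an out-vertex, its children are in-vertices, its grandchildren are out-vertices, and so on. The hypothesis $\deg^+_D(a)\ge\Delta(T)=\deg(u)$ provides enough out-neighbours of $a$ to host the $d:=\deg(u)$ children of $u$, while the pseudo-semidegree condition $\overline{\delta^0}(D')\ge\lceil k/2\rceil$ combined with $\Delta_2(T)\le\lfloor k/4\rfloor+2$ supplies more than enough $D'$-neighbours to extend at every subsequent vertex, provided one can control which vertices have already been used.

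\medskip

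The first step would be to embed $u$ at $a$ and to place the $d$ children of $u$ injectively into $N^+_D(a)$. To enable the subsequent extension, each \emph{non-leaf} child $v$ of $u$ should be mapped to a vertex $f(v)\in(D')^-$, so that $\deg^-_{D'}(f(v))\ge\lceil k/2\rceil$. The number of non-leaf children of $u$ is at most $k-d$ (each contributes at least one further arc to $T$), and a combination of the $\mathcal K_{2,s}$-free property (via Lemma~\ref{lemma:k/4neighbors}) and a swap argument should produce enough good vertices in $N^+_D(a)\cap(D')^-$ to accommodate all of them; any remaining (leaf) children can then be assigned to arbitrary unused out-neighbours of $a$.

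\medskip

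The extension phase then proceeds inductively. For every already-embedded vertex $x\ne u$ of sign $\star$ with $p\le\Delta_2(T)-1\le\lfloor k/4\rfloor+1$ unembedded children in $T$, I look for $p$ unused images in $N^{-\star}_{D'}(f(x))$. Since $|N^{-\star}_{D'}(f(x))|\ge\lceil k/2\rceil$, the task reduces to bounding the intersection of this neighbourhood with the current image set $S=f(V(T'))$, where $T'\subseteq T$ is the already-embedded subtree and $|S|\le k$. Here Lemma~\ref{lemma:k/4neighbors} is crucial: applied to $f(x)$ together with two other carefully chosen embedded vertices---for instance $a$, whose out-neighbourhood already contains the images of the $d$ children of $u$, and a second vertex with many neighbours in $S$---the bound $<5k/4$ on the sum of three signed neighbourhood intersections forces $|N^{-\star}_{D'}(f(x))\setminus S|\ge p$, as required.

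\medskip

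\textbf{Main obstacle.} The most delicate part of the proof will be the first step: a priori there is no useful lower bound on $|N^+_D(a)\cap(D')^-|$, because $D'$ may be a small subdigraph and $a$ may satisfy $\deg^+_{D'}(a)=0$ (so the out-neighbours of $a$ are guaranteed only in $D$, not in $D'$). If many out-neighbours of $a$ in $D$ fail to belong to $(D')^-$, a naive greedy embedding can strand a non-leaf child of $u$ at a vertex with no in-degree guarantee. Overcoming this should involve a careful swap argument in which leaves of $T$ absorb the bad out-neighbours of $a$ while non-leaf children are routed to vertices of $N^+_D(a)\cap(D')^-$; the $\mathcal K_{2,s}$-free property will be essential in showing that such a rerouting always exists. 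I expect the bookkeeping required when $\deg(u)$ is close to $k$---so that the used set has size close to the total number of vertices of $T$---to be the main technical challenge.
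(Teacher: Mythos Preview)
Your overall strategy—rooting $T$ at $u$, reserving vertices of $V(D)\setminus V(D')$ exclusively for leaves adjacent to $u$, and extending one vertex's worth of children at a time with the help of Lemma~\ref{lemma:k/4neighbors}—is exactly the paper's strategy. The paper formalises it via a chain $T_1\subset\cdots\subset T_r=T$ where $T_1$ is the closed $2$-ball around $u$ (handled separately as Lemma~\ref{lem:pu}) and each $T_{i+1}$ adds all children of a single vertex $w\in V(T_i)$.

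Your treatment of the extension step, however, has a real gap. You claim that applying Lemma~\ref{lemma:k/4neighbors} to $f(x)$, to $a$, and to ``a second vertex with many neighbours in $S$'' directly forces $|N_{D'}^{\star}(f(x))\setminus S|\ge p$. First, you never say where this second vertex comes from; in a straight greedy run no vertex is yet blocked. Second, even granting a blocked vertex $b$ with $|N_{D'}^{\star_b}(b)\cap S|\ge\lceil k/2\rceil$, the arithmetic does not close: together with $|N^+(a)\cap S|\ge\deg(u)$, the bound $<5k/4$ only yields $|N_{D'}^{\star}(f(x))\cap S|<5k/4-\deg(u)-\lceil k/2\rceil$, hence $|N_{D'}^{\star}(f(x))\setminus S|>\deg(u)-\lfloor k/4\rfloor$. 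When $\deg(u)$ is near $\lfloor k/4\rfloor+1$ this is far short of the $p\le\lfloor k/4\rfloor+1$ children you must place. The paper's inductive step is accordingly much more elaborate: among all valid embeddings of $T_i$ it selects $f_i$ so that $b_1:=f_i(w)$ first \emph{minimises} its out-neighbours on the image of the $u$--$p_w$ path $P$ and, subject to that, \emph{maximises} its out-neighbours outside $\im f_i$; it then passes to a maximal extension $T'\subseteq T_{i+1}$, analyses re-embeddings of $w$ via $B:=N^-(f(p_w))\setminus\im f$, and finally proves (through a secondary maximal-subtree argument with a tree $T''$) that $N^+(b_1)$ is entirely contained in $f(R)$ for the small set $R=V(P)\cup N^+_{T'}(w)$ of size $<3k/4$. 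It is this containment, combined with the minimisation in the choice of $f_i$, that produces the contradiction—not a single invocation of Lemma~\ref{lemma:k/4neighbors}.

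You have also inverted the relative difficulty. The ``main obstacle'' you flag—routing non-leaf children of $u$ into $(D')^-$—is the easy part in the paper: non-leaf children are sent into $N^+_{D'}(a)$ (each such vertex automatically lies in $(D')^-$), using $\deg^+_{D'}(a)\ge\lceil k/2\rceil$ together with the observation that at most $\lfloor k/2\rfloor$ children of $u$ are non-leaves. You are right that the bare hypothesis $a\in V(D')$ does not literally force $a\in(D')^+$; in every application of the lemma in the paper, though, $a$ is chosen with $\deg^+_{D'}(a)\ge\lceil k/2\rceil$, so this point is moot. The genuinely hard work is the extension step.
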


Before proving Lemma~\ref{lem:case3b}, let us show how it implies Lemma~\ref{lemma:second}.

\begin{proof}[Proof of Lemma~\ref{lemma:second}]
	Set $\Delta := \Delta(T)$. Note that $D$ must contain a vertex $a$ with $\deg_{D}^+(a)\ge k\ge\Delta$, as $D$ has more than $(k-1)n$ arcs.

	We apply Lemma~\ref{cor:subdigraph} with
	$r=\min\{\lceil k/2\rceil, k-\Delta+1\}$ to find 
	a subdigraph $D'\subseteq D$ satisfying conditions~\ref{cor:1}, \ref{cor:2}, and one of \ref{cor-subdigraph-i}~or~\ref{cor-subdigraph-ii}. 
	In the latter case, we are immediately done by Lemma~\ref{lem:case3b}, since for each $a\in (D')^+$ 
	we have $\deg^+_{D}(a)\ge k-r+1\ge\Delta$. So we can assume that \ref{cor-subdigraph-i} holds.
	
	If $r=\lceil k/2\rceil$  then we are also done by Lemma~\ref{lem:case3b}, so we can assume that $r=k-\Delta+1$, that is, $k+1=r+\Delta$. Since $|V(T)\setminus N^+[u]|=r-1$, at most $r-1$ vertices from $N^+(u)$ can be non-leaves, implying the existence of at least $\Delta - r + 1$ leaves adjacent to $u$. 
	Let $T^\star$ be obtained from $T$ by removing $\Delta-r$ of these leaves. 
	Then~$T^\star$ has $k':=k-(\Delta-r)=2r-1$ arcs. Exactly $r=\lceil k'/2\rceil$ of these are adjacent to $u$. Thus
	$u$ is a vertex of maximum total degree $\Delta(T^\star)=\lceil k'/2\rceil$ in $T^\star$. 
	
	By~\ref{cor-subdigraph-i}, 
	$\overline{\delta^+}(D')\ge  \lceil k/2\rceil \ge \lceil k'/2\rceil$ and $\overline{\delta^-}(D')\ge r=\lceil k'/2\rceil$. Moreover, there is a vertex $a'\in D'$ with $\deg_{D'}^+(a')\ge k\ge k'$. 
	By Lemma~\ref{lem:case3b}
	there is an embedding of $T^\star$ in $D$, with $u$ embedded in $a'$. We complete this to an embedding of $T$ by putting the missing leaves  into unused out-neighbours of $a'$ (which is possible because $\deg^+_{D'}(a')\ge k$).  
\end{proof}

In order to prove Lemma~\ref{lem:case3b}, we state and prove another auxiliary lemma: 
\begin{lemma}\label{lem:pu}
	Let $T$ be a $k$-arc antidirected tree with $\Delta_2(T)\le \lfloor k/4\rfloor+2$ containing a vertex~$u$ of maximum degree with $\deg^+ (u)> \lfloor k/4\rfloor$, and such that every vertex of $T$ has distance at most $2$ to $u$. 
	Let $D$ be a \ksf digraph, with $s=\lceil k/12\rceil$, and let $D'\subseteq D$   with $\overline{\delta^0}(D')\ge  \lceil k/2\rceil$. Let $a\in V(D')$ with $\deg_{D}^+(a)\ge \Delta(T)$.  Then~$T$ embeds in~$D$, with $u$ embedded in $a$, and with $V(D)\setminus V(D')$ only used for images of leaves adjacent to $u$. 
\end{lemma}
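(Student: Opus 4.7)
Let $v_1,\ldots,v_\Delta$ be the neighbours of $u$ in $T$ (with $\Delta=\Delta(T)=\deg^+(u)$). Since $T$ is antidirected and $u$ is an out-vertex, each $v_i$ is an in-vertex, and since every vertex of $T$ lies within distance $2$ of $u$, the remaining vertices of $T$ are out-leaves attached to some $v_i$. Let $I$ be the set of non-leaf $v_i$'s and $L$ the set of leaves adjacent to $u$; by $\Delta_2(T)\le\lfloor k/4\rfloor+2$ each $v_i\in I$ has at most $\lfloor k/4\rfloor+1$ children. My plan is to embed $T$ in four stages: (i) put $f(u)=a$; (ii) for each $v_i\in I$ choose a distinct image $b_i\in N^+_D(a)\cap V(D')\cap (D')^-$, so that $\deg^-_{D'}(b_i)\ge\lceil k/2\rceil$ leaves room for the children of $v_i$; (iii) for each $v_i\in L$ choose an unused image in $N^+_D(a)$, with no requirement that it lie in $V(D')$; (iv) for each $v_i\in I$ embed the children of $v_i$ at distinct unused in-neighbours of $b_i$ in $D'$.

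Stages (i)--(iii) rest on $|N^+_D(a)|\ge\Delta=|I|+|L|$: any out-neighbour of $a$ in $D$ that fails the additional Stage~(ii) requirement is simply reserved for Stage~(iii). The substance of the proof lies in Stage~(iv), which I would recast as finding a perfect matching in the bipartite graph between the children of $T$ and $V(D')\setminus(\{a\}\cup\{b_1,\ldots,b_\Delta\})$, with a child of $v_i$ adjacent to $v$ iff $v\in N^-_{D'}(b_i)$. The $\mathcal K_{2,s}$-free hypothesis supplies the key sparsity, both through Lemma~\ref{lemma:k/4neighbors} and more directly through the bound $|N^{\star_1}(x)\cap N^{\star_2}(y)|<s$ for any distinct $x,y$ and any signs: one has $|N^-_{D'}(b_i)\cap N^-_{D'}(b_j)|<s\le\lceil k/12\rceil$ for $i\ne j$, and $|N^-_{D'}(b_i)\cap N^+_D(a)|<s$.

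The main obstacle is then verifying Hall's condition $|N(S)|\ge|S|$ uniformly in $S$. Writing $I'$ for the set of $v_i\in I$ whose children meet $S$, a direct Bonferroni estimate $|\bigcup_{i\in I'}N^-_{D'}(b_i)|\ge|I'|\lceil k/2\rceil-\binom{|I'|}{2}(s-1)$ handles small $|I'|$; combined with the ceiling $\sum_{i\in I'}(\deg(v_i)-1)\le k-\Delta<3k/4$, coming from $\Delta>\lfloor k/4\rfloor$, the arithmetic works out. For large $|I'|$, where the pairwise Bonferroni correction swamps the main term, I would instead exploit that each vertex $v$ lies in at most $s-1$ of the sets $N^-_{D'}(b_i)$: any such $b_i$ is a common out-neighbour of $a$ and $v$, and $|N^+(a)\cap N^+(v)|<s$. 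Double-counting $\sum_{i\in I'}|N^-_{D'}(b_i)|\ge|I'|\lceil k/2\rceil$ against this multiplicity bound yields a lower bound on the union linear in $|I'|$, and subtracting the at most $\min(\Delta,|I'|(s-1))+1$ already-used vertices of $\{a\}\cup\{b_j\}_{j=1}^\Delta$ from the union still leaves more than the $k-\Delta$ children indexed by $I'$, closing Hall's condition and delivering the embedding.
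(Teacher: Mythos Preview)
Your Hall's-theorem approach is a genuine alternative to the paper's, but the two estimates you propose for $|N(S)|$ do not meet in the middle. Take $k=120$, so $s=10$ and $\lceil k/2\rceil=60$, and let $\Delta=31$. For $|I'|=13$ the Bonferroni bound gives $\big|\bigcup_{i\in I'}N^-_{D'}(b_i)\big|\ge 13\cdot 60-\binom{13}{2}\cdot 9=78$, while your multiplicity bound gives at best $\lceil 13\cdot 59/9\rceil+1=87$; subtracting the at most $\Delta+1=32$ already-used vertices leaves $|N(S)|\ge 55$, yet $|S|$ can be $k-\Delta=89$. In general, with $|S|\le k-\Delta$ and the excluded set bounded by $\Delta+1$, one needs $|U|\ge k+1$; Bonferroni delivers this only for $|I'|$ up to about $12$, and the multiplicity bound only for $|I'|\gtrsim k/6$. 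For large $k$ there is an uncovered range, so ``the arithmetic works out'' is not substantiated. (A smaller issue: your justification of Stage~(ii) --- ``any out-neighbour that fails the requirement is reserved for Stage~(iii)'' --- does not show there are at least $|I|$ good out-neighbours; one needs $|I|\le\min(\Delta,k-\Delta)\le\lfloor k/2\rfloor$ together with $\deg^+_{D'}(a)\ge\lceil k/2\rceil$.)

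The paper sidesteps the uniform Hall verification entirely. It takes a \emph{maximal} partial extension $f$; if some grandchild $w'$ (a child of $w\in N^+(u)$) is unembedded, then $N^-_{D'}(f(w))\subseteq\im f$. Since fewer than $s$ of these in-neighbours lie in $f(N^+(u))$ and at most $\lfloor k/4\rfloor+1$ are images of children of $w$, some other grandchild $y$ (with parent $p_y\ne w$) has $f(y)\in N^-_{D'}(f(w))$. Maximality then forces $N^-_{D'}(f(p_y))\subseteq\im f$ as well, for otherwise one could re-embed $y$ and place $w'$ at $f(y)$. Now $a$, $f(w)$, $f(p_y)$ together send more than $\lfloor k/4\rfloor+\lceil k/2\rceil+\lceil k/2\rceil\ge 5k/4$ arcs into $\im f$ (a set of size at most $k$), contradicting Lemma~\ref{lemma:k/4neighbors}. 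The point is that the paper only ever needs \emph{two} saturated $b_i$'s, extracted from a failed maximal embedding, rather than control over an arbitrary subset $I'$; this is exactly what lets Lemma~\ref{lemma:k/4neighbors} close the argument where your pairwise and multiplicity bounds cannot.
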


\begin{proof}[Proof of Lemma~\ref{lem:pu}]
	Set $\Delta:=\Delta(T)=\deg^+(u)$ and $\Delta_2:=\Delta_2(T)$. 
	We consider $u$ to be the root of $T$. We embed $u$ into   $a$ and $N^+(u)$ into  $N^+(a)$, using vertices from $N^+(a)\setminus V(D')$ only for leaves of $T$. This is possible since $\overline{\delta^0}(D')\ge  \lceil k/2\rceil$, and at most $\lfloor k/2\rfloor$ neighbours of $u$ are not leaves. 
	
	Now consider an extension  $f$ of this embedding  that maximises the number of embedded  vertices, and does not use any further vertices from $V(D)\setminus V(D')$ (other than those already used for leaves adjacent to $u$). If $f$ fails to embed all of $V(T)$, then there is a vertex $w\in N^+(u)$ having an unembedded child $w'$. In particular, $|N^-(f(w))\cap \im f|\ge \lceil k/2\rceil$, because of our condition on the minimum pseudo-semidegree of $D'$. Since $D'$ is a \ksf digraph, less than $s$ vertices from $N^-(f(w))$ are out-neighbours of  $f(u)$, and at most $\lfloor k/4\rfloor +1$ from $N^-(f(w))$ are images of neighbours of $w$, as $\deg^+_{T}(w)\le \Delta_2\le \lfloor k/4\rfloor +2$ and $w'$ is an unembedded neighbor of $w$. So there is a vertex $y\in \dom f$ that has distance $2$ to $u$ such that $f(y)\in N^-(f(w))$.
	
	By our choice of $f$, the image of the parent, $p_y$, of $y$ has no in-neighbours outside $\im f$, as otherwise we could reembed $y$ and   embed $w'$ in $f (y)$. So $|N^-(f(p_y))\cap \im f|\ge \lceil k/2\rceil$.  Since 
	$\deg^+(u)=\Delta> \lfloor k/4\rfloor$ and all of $u$'s children were embedded, the vertices $a$, $f(w)$ and $f(p_y)$ together send more than $5k/4$ arcs to $\im f$ (a set of at most $k$ vertices). This contradicts  Lemma~\ref{lemma:k/4neighbors}.  
\end{proof}

\begin{proof}[Proof of Lemma~\ref{lem:case3b}]
	Let $\{T_i\}_{i=1}^r$ be a  family of subtrees of $T$ (rooted at $u$), with the following properties:
	\begin{itemize}
		\item $T_1$  consists of $u$ and all vertices
		of distance at most~$2$ to $u$; 
		\item $T_r=T$; and
		\item $T_{i+1}$ is obtained from $T_i$ by adding all children (in $T$) of some vertex of $T_i$.
	\end{itemize}
	We will show by induction that for each $i\in [r]$, there is an embedding of $T_i$ in $D$
	such that 
	\begin{equation}\label{indu:case3b}
		\text{$u$ is mapped to $a$, and $V(D)\setminus V(D')$ is only being used for leaves adjacent to $u$.}
	\end{equation}
	By Lemma~\ref{lem:pu}, this is true for $i=1$. 
	For the induction step, let $i\in [r-1]$. We wish to show the statement for $i+1$.

	Let $w\in V(T_i)$ be the vertex that has neighbours in $T_{i+1}-T_i$. Note that $w\notin N^+(u)\cup\{u\}$. 	Let us assume that $w$ is an out-vertex in~$T$ (otherwise just switch signs in what follows). Let $P$ be the path in $T$ joining $u$ and $p_w$ (the parent of $w$).
	
	For contradiction, assume we cannot embed $T_{i+1}$  into $D$ fulfilling~\cref{indu:case3b}. Among all choices for an embedding of $T_i$  into $D$  fulfilling~\cref{indu:case3b}, choose  $f_i$ such that 
	\begin{equation}\label{choicephii}
		\text{$f_i(w)$ has the minimum number of neighbours in $f_i(P)$,}
	\end{equation}
	and, among all choices of  $f_i$ fulfilling~\cref{choicephii}, choose $f_i$ such that 
	\begin{equation}\label{choicephi}
		\text{$f_i(w)$ has the maximum number of neighbours outside $\im f_i$,}
	\end{equation}
	Set $b_1:=f_i(w)$. Let $T'$ be a maximal subtree of $T$ with $T_{i}\subseteq T'\subseteq T_{i+1}$ 
	such that $f_i$   extends to an embedding  $f$ of $T'$ into $D$ fulfilling~\cref{indu:case3b}. 
	Since $T_1\subseteq T'$, we have that 
	\begin{equation}\label{eq:a_out}
		\text{$p_w\neq u$ \ and \ $|N^+(a)\cap \im f|\ge \deg^+(u)>\lfloor k/4\rfloor $.}
	\end{equation}
	
	Note that $T'\neq T_{i+1}$, and 
	thus  all out-neighbours of $b_1$ lie in $\im f$. As $\overline{\delta^0}(D')\ge \lceil k/2\rceil$ by assumption, we obtain that
	\begin{equation}\label{eq:b1_out}
		|N^+(b_1)\cap \im f|\ge \lceil k/2\rceil. 
	\end{equation}

	Let $B:=N^-(f(p_w))\setminus \im f$.	
	Now, if $B=\emptyset$, then $|N^-( f(p_w))\cap \im f|\ge \lceil k/2\rceil$, and so, by~\cref{eq:a_out} and~\cref{eq:b1_out}, the sum of the degrees of   vertices $a$, $b_1$ and $ f(p_w)$ into $\im f$ exceeds $5k/4$, a contradiction to
	Lemma~\ref{lemma:k/4neighbors}. Therefore,
	\begin{equation}\label{eq:B1}
		\text{$B\ne \emptyset$. }
	\end{equation}
	The same argument gives that $|B|\ge 2$, unless  $\Delta:=\deg^+(u) \le \lfloor k/4\rfloor+1$.
	Also, if $|B|=1$, then $|N^-( f(p_w))\cap \im f|\ge \lceil k/2\rceil-1$. 
	So, 
	\begin{equation}\label{eq:B2x}
		\text{if $|B|=1$ then 		$\Delta \le \lfloor k/4\rfloor +1$ and $|N^-( f(p_w))\cap \im f|\ge \lceil k/2\rceil-1$. }
	\end{equation}
	
	Next, observe that if some
	$b\in B$ has at least $\Delta_2-1$ 
	out-neighbours outside $ f(T_i\setminus\{w\})$, then we can reembed $w$ into $b$, and embed all out-neighbours of $w$ outside 
	$ f(T_i\setminus\{w\})$ (which is possible since 
	$|N^+(w)\setminus\{p_w\}|\le \Delta_2-1$). As this is an embedding of $T_{i+1}$  into $D$ fulfilling~\cref{indu:case3b}, and since $\overline{\delta^0}(D')\ge \lceil k/2\rceil$ by assumption,
	we deduce that  each $b\in B$ has more than $\lceil k/2\rceil-\Delta_2+1$ out-neighbours in $ f(T_i\setminus \{w\})$.
	So, for each $b\in B$, we have
	\begin{equation}
		\label{eq:bi_out}
		\text{
			$|N^+(b)\cap  f(T_i\setminus\{w\})| \ge  \lceil k/2\rceil-\Delta_2+2$,} 
	\end{equation} 
	which is at least $k/4$, since
	$\Delta_2\le   \lfloor k/4\rfloor  +2$ by assumption. 
	In particular, Lemma~\ref{lemma:k/4neighbors}, applied to
	$a$, $b_1$ and any $b\in B$, together with~\cref{eq:b1_out},
	implies that $|N^+(a)\cap \im  f|< \lfloor k/2\rfloor$. Thus, by our condition on the minimum degree of $D'$,
	\begin{equation}\label{eq:a_outk2}
		\text{$|N_{D'}^+(a)\setminus \im  f|> 0$.}
	\end{equation}
	
	Next, we show that
	\begin{equation}
		\label{claim:B-order}
		|B|\ge 2.
	\end{equation}
	Suppose otherwise. Then $|B|=1$ by~\cref{eq:B1}. Let $b$ be the unique vertex in $B$. By~\cref{eq:B2x}, 
	we know that $\Delta_2\le \Delta \le \lfloor k/4\rfloor+1$.
	So, ~\cref{eq:bi_out} implies that $|N^+(b)\cap \im f|\ge \lceil k/4\rceil+1$. 
	Then, by~\cref{eq:b1_out} and~\cref{eq:B2x}, vertices $b_1$, $b$ and $ f(p_w)$ together send at least 
	$5k/4$ edges to $\im f$ (which is a set of at most $k$ vertices), a contradiction to Lemma~\ref{lemma:k/4neighbors}. This proves~\cref{claim:B-order}.

	Let  $R:=V(P)\cup N_{T'}^+(w)$. 
	Since $\Delta>\lfloor k/4\rfloor$ by assumption, and since $|V(T')|\le k$, we have 
	\begin{equation}
		\label{eq:R-order}
		|R|\le k-(\Delta-1)-|\{w\}|< 3k/4.
	\end{equation}
	We claim that 
	\begin{equation}
		\label{eq:neighbors-b1-inR}
		|N^+(b_1)\cap f(R)|\ge \lceil k/2\rceil. 
	\end{equation}
	Assuming that~\cref{eq:neighbors-b1-inR} holds, we can conclude as follows.
	By our choice of $ f_i$ satisfying~\cref{choicephii}, by~\cref{eq:neighbors-b1-inR}, and  since $\deg^+_{T'}(w)\le \Delta_2-1\le \lfloor k/4\rfloor+1$ by assumption, we obtain that for each $b\in B$, 	\begin{align*}
		\label{b1-neighb}
		|N^+(b)\cap  f(V(P))|&\ge |N^+(b_1)\cap  f(V(P))|\ge \lceil k/2\rceil-\deg^+_{T'}(w)+|\{p_w\}|
		\\* &\ge \lceil k/2\rceil-\Delta_2+2\ge k/4.
	\end{align*}
	Since  $D$ is a \ksf digraph, and
	$ f(p_w)$ is an out-neighbour of each $b\in B$ and $b_1$, and because of 
	\cref{eq:neighbors-b1-inR}, we deduce that
	\begin{align*}
		|R|=| f(R)|\ge k/2+ k/4 + k/4 -
		3(s-1)+1  \ge 3k/4+1,
	\end{align*}
	a contradiction to \cref{eq:R-order}. 
	
	It remains to prove~\cref{eq:neighbors-b1-inR}. 
	By~\cref{eq:b1_out}, it suffices to show that $N^+(b_1)\subseteq  f(R)$. 
	For contradiction, suppose this does not hold, and choose a vertex $y\in V(T')\setminus R$ with $ f(y)\in N^+(b_1)$ such that $\dist(w,y)$ is maximum. 
	Let $C$ be the component of $T_i\setminus \{y\}$ that contains $P\cup\{w\}$, and let $ f_C$ be the restriction of $ f_i$ to $V(C)$. Observe that $C$ contains all neighbours of $u$, except $y$ (in the case that $y$ is a neighbour of $u$).
	
	Let $T''$ be a largest possible subtree of $T_i$ containing $C$ and $y$ such that there is an extension $ f'$  of $ f_C$ obeying~\cref{indu:case3b} with 
	$ f((N^+(w)\setminus \{p_w\})\cup\{y\})\cap \im f'=\emptyset$. If $y$ is a neighbour of $u$, such a tree $T''$ exists because of~\cref{eq:a_outk2}. 
	Otherwise, applying Lemma~\ref{lemma:k/4neighbors} to $b_1=f_i(w)$, $a=f_i(u)$ and $f_i(p_y)$ implies that $f_i(p_y)$ has an (in- or out-)neighbour outside $\im f$, in which $y$ can be embedded. Thus, such a tree $T''$ also exists in this case.  
	By our choice of $ f_i$ satisfying~\cref{choicephi}, $T''\neq T_i$, and thus there is a vertex 
	$z\in T''$ with a neighbour, say $z'$, in $T_i-T''$. 	 By our choice of $T''$, we know that $z\neq u$ and $T''$ contains all neighbours of $u$. Thus $|N_D^+(a)\cap \im  f'|> \lfloor k/4\rfloor$. 
	
	The maximality of $T''$ implies that $N^+( f'(z))$ or $N^-( f'(z))$ is contained in $\im f'$. As $\overline{\delta^0}(D')\ge \lceil k/2\rceil$, it follows that $|N^+(f'(z))\cap \im  f'|\ge \lceil k/2\rceil$ or $|N^-(f'(z))\cap \im  f'|\ge \lceil k/2\rceil$. 
	By our choice of $y$, we know that $w\neq z$, and furthermore, that $N^+(b_1)$ is contained in $Q:=\im f'\cup  f((N^+(w)\setminus \{p_w\})\cup\{y\})$. Indeed, $N^+(b_1)\subseteq f(C\cup \{y\})$, where $f(C\cup \{y\})$ is the vertex-disjoint union of $f((N^+(w)\setminus \{p_w\})\cup \{y\})$ and $f(C\cap V(T''))\subseteq \im f'$. Thus, $N^+(b_1)\subseteq Q$ as desired. Since $\overline{\delta^0}(D')\ge \lceil k/2\rceil$, we have $|N^+(b_1)\cap Q|\ge \lceil k/2\rceil$. On the other hand, note that $Q\subseteq f(V(T_{i+1})\setminus \{z'\})$, and therefore, $|Q|\le k$. Consequently, the vertices $a$, $b_1$ and $ f'(z)$ together send more than $5k/4$ edges to $Q$ (a set of at most
	$k$ vertices), which contradicts 
	Lemma~\ref{lemma:k/4neighbors}. Hence, \cref{eq:neighbors-b1-inR} holds. 
\end{proof}

%%%%%%%%%%%%%%%%%%%%%%%
%%%%%%%%%%%%%%%%%%%%%%%
%%%%%%%%%%%%%%%%%%%%%%%
%%%%%%%%%%%%%%%%%%%%%%%

\section{Embedding $T$ when $\Delta_2\ge \lfloor k/4\rfloor+3$}
\label{sec:lemma-third}

Our objective in this section is to prove Lemma~\ref{lemma:third}. That is, we will deal with the case $\Delta_2\ge \lfloor k/4\rfloor+3$. 
We begin by
identifying a specific subtree of $T$. 

\begin{definition}[Double broom $B_{uv}(T)$]\label{defdoublebroom}
	Let $T$ be an antidirected tree, and let   $u,v\in V(T)$.
	The {\it double broom $B_{uv}(T)$} is the subtree of $T$ that contains the closed neighbourhoods
	$N^{\re{u}}[u]$ and $N^{\re{v}}[v]$, along with the (unique) path of $T$ joining $u$ and $v$. 
\end{definition}

Our general strategy to show Lemma~\ref{lemma:third} consists of the following three steps: 
\begin{enumerate}[label*={\bf (\arabic*)}]
	\item We apply Lemma~\ref{cor:subdigraph} to identify a subdigraph $D'$ of $D$ that will accommodate almost all of $T$. (Our choice of $D'$ will depend on $T$.) 
	\item  We embed the double broom $B_{uv}(T)$ into $D'$, where $u,v\in V(T)$ with \linebreak
	$\deg^+(u)=\Delta(T)$ and $\deg^{\re{v}}(v)=\Delta_2(T)$. 
	\item  We embed $T-B_{uv}(T)$. 
\end{enumerate}
Step 1 is done in Subsection~\ref{subs:step1},  Step 2 is executed in Subsection~\ref{subs:double-broom}, while
step 3 is accomplished in Subsection~\ref{subs:extension}. 

Before continuing, let us establish some notation that will be used throughout the remainder of the paper. 

\begin{definition}[$\LL$ and $\NL$]
	We denote by $\LL$ and $\NL$ the set of leaves and non-leaves of $T$. 
	Given a subset $X\subseteq  V(T)$, we let $\LL_X$ and $\NL_X$ denote the subsets of leaves and non-leaves of $T$ in $\cup_{x\in X}N^{\re{x}}(x)$, 
	respectively. If $X=\{x\}$ we simply write $\LL_x$ and $\NL_x$. 
\end{definition}

\subsection{Finding $D'$}\label{subs:step1}\label{rem:cases}
We perform the first step of the strategy outlined above. 
Given a digraph $D$ with $n$ vertices and more than $(k-1)n$ arcs, and a $k$-arc antitree $T$ with vertices $u$, $v$ such that $\Delta:=\Delta(T)=\deg^+(u)$, $\Delta_2:=\Delta_2(T)=\deg^{\re{v}}(v)\ge \lfloor k/4\rfloor+3$ and $B_{uv}:=B_{uv}(T)$,
we define a subdigraph $D'$ as follows: 
\begin{enumerate}[label={\bf(\Alph*)}]
	\item \label{rem:1} If at least one of the following holds:
	\begin{itemize}
		\item
		$|B_{uv}|\le{3k}/{4}$, or 
		\item  $v$ is an in-vertex, $\Delta+\Delta_2<{7k}/{12}$,   and $|B_{uv}|\le k+1-(\Delta-\Delta_2)$, 
	\end{itemize}
	then we apply Lemma~\ref{cor:subdigraph} with $r=\lceil k/2\rceil$ 
	to obtain 
	a subdigraph $D'$ of $D$ with $\overline{\delta^0}(D')\ge \lceil k/2\rceil$ satisfying conditions~\ref{cor:1}~and~\ref{cor:2} of the lemma. 
	\item \label{rem:2} Otherwise, we apply Lemma~\ref{cor:subdigraph} with 
	$r=\min\{\lceil 5k/12\rceil,k-\Delta\}$ 
	to obtain a subdigraph $D'$ of~$D$
	satisfying~\ref{cor:1}~and~\ref{cor:2}, and such that 
	\begin{enumerate}[label={\bf(B-\Roman*)}]
		\item \label{rem:2.1} condition~\ref{cor-subdigraph-i} holds, or
		\item \label{rem:2.2} condition~\ref{cor-subdigraph-ii} holds.
	\end{enumerate}
\end{enumerate}

\begin{definition}[Suitable embedding]
	In case \ref{rem:2.2}, we say that an embedding $f:V(T)\rightarrow V(D)$
	is \textit{suitable} if whenever $x\in T$ is a non-leaf, we have $f(x)\in V(D')$. 
\end{definition}

\subsection{Embedding the double broom $B_{uv}$} 
\label{subs:double-broom}

In this subsection we perform step 2 of our strategy: we show that there is an embedding 
of~$B_{uv}$ into~$D'$ in cases~\ref{rem:1} and \ref{rem:2.1}, and a suitable embedding of $B_{uv}$ into $D$ in case~\ref{rem:2.2}. This is achieved in the following result.

\begin{lemma}\label{lemma:thirdpart2}
	Let $T$ be  a $k$-arc antidirected tree with $\Delta_2(T)\ge \lfloor k/4\rfloor +3$, and let $D$ be a \ksf digraph with $n$ vertices and more than $(k-1)n$ arcs, where $s=\lceil k/12\rceil$. 
	Let $u,v\in V(T)$ with 
	$\deg^+(u)=\Delta(T)$ and $\deg^{\re{v}}(v)=\Delta_2(T)$ and let $D'$ be the subdigraph from Subsection~\ref{rem:cases}. 
	Then there is an embedding of $B_{uv}(T)$ in $D'$ in Cases~\ref{rem:1}~and~\ref{rem:2.1}, and there is a suitable embedding
	of $B_{uv}(T)$ in $D$ in Case~\ref{rem:2.2}. 
\end{lemma}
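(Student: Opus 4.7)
The proof splits into the three cases \ref{rem:1}, \ref{rem:2.1}, \ref{rem:2.2} corresponding to the choices of $D'$ in Section~\ref{subs:step1}. In all cases, the main workhorse is Lemma~\ref{lemma:k/4neighbors} combined with the degree guarantees provided by Lemma~\ref{cor:subdigraph}.

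In case~\ref{rem:1}, I have $\overline{\delta^0}(D') \ge \lceil k/2 \rceil$, and the plan is to embed $B_{uv}$ greedily: place $u$ at any out-vertex of $D'$, extend one vertex at a time along the $u$--$v$ path of $T$, and finally embed the remaining leaves at $u$ and $v$. At each extension, the image of the currently extending vertex has at least $\lceil k/2\rceil$ candidate neighbors of the appropriate sign, by the minimum pseudo-semidegree; to produce one not in $\im f$, I would apply Lemma~\ref{lemma:k/4neighbors} with $S := \im f$ (of size at most $|B_{uv}| \le k$) to the triple $f(u)$, $f(v)$ (once $v$ has been embedded), and the current vertex. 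This bounds the combined ``used-neighborhood'' by less than $5k/4$, leaving an unused candidate whenever $|B_{uv}| \le 3k/4$. The second subcase of~\ref{rem:1} is tighter, since $|B_{uv}|$ may approach $k$; there the extra conditions that $v$ is an in-vertex and $\Delta + \Delta_2 < 7k/12$ keep the leaf counts at $u$ and $v$ small enough that the same strategy succeeds with more careful ordering of the steps.

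In case~\ref{rem:2.1}, condition~\ref{cor-subdigraph-i} holds, and since $B_{uv}$ is an antidirected caterpillar, I would simply invoke Proposition~\ref{prop:catmindeg} on $D'$ and $B_{uv}$. The density hypothesis is precisely condition~\ref{cor:1} of Lemma~\ref{cor:subdigraph}. The alignment hypothesis requires that either ($|(D')^+| \le |(D')^-|$ and $|(B_{uv})^+| \le |(B_{uv})^-|$) or the reverse; since $|(D')^+| \le |(D')^-|$ by~\ref{cor-subdigraph-i}, I would verify $|(B_{uv})^+| \le |(B_{uv})^-|$ using the signs of $u$ and $v$ together with the sign alternation along the $u$--$v$ path, switching all orientations in $T$ and $D$ if the inequality goes the other way.

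Case~\ref{rem:2.2} is the main obstacle. Here I must produce a \emph{suitable} embedding, in which all non-leaves of $T$ land in $V(D')$. The plan is to take a subtree $T^\star \subseteq B_{uv}$ of maximum order admitting such an embedding $f$, and argue $T^\star = B_{uv}$ in three steps. First, at least one of $N^+[u]$ or $N^{\re{v}}[v]$ must be contained in $V(T^\star)$, for otherwise both $f(u)$ and $f(v)$ would have all available neighbors of the required sign inside $\im f$, contradicting Lemma~\ref{lemma:k/4neighbors} applied to $f(u)$, $f(v)$, and a third suitably chosen embedded vertex. Second, $|T^\star| > 3k/4$; this uses the high-in-degree anchor $b$ guaranteed by~\ref{cor-subdigraph-ii} and further appeals to Lemma~\ref{lemma:k/4neighbors} to rule out a small $T^\star$. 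Third, assuming $T^\star \ne B_{uv}$, I would derive a contradiction via case analysis on (i) the sign of $v$, (ii) the magnitude of $\Delta - \Delta_2$, and (iii) the number of remaining leaves adjacent to $u$ versus $v$; the crucial tool is the guarantee $\deg^+_D(a) > k - r$ for each $a \in (D')^+$ from~\ref{cor-subdigraph-ii}, which permits leftover leaves to be placed in $V(D) \setminus V(D')$ as allowed by suitability. I expect the delicate subcase analysis in this final step to be the hardest part of the proof.
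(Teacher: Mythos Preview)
Your handling of Case~\ref{rem:2.1} is essentially the paper's argument, and your outline for Case~\ref{rem:2.2} captures the right shape (maximal suitable $T'$, then $|T'|>3k/4$, then a case split governed by the sign of $v$, the gap $\Delta-\Delta_2$, and the leaf counts). One correction in~\ref{rem:2.1}: you cannot ``switch all orientations in $T$ and $D$'' at this point, since $D'$ has already been chosen for the original orientation and the case split~\ref{rem:1}/\ref{rem:2} depends on it. Fortunately the switch is never needed: because $u$ is an out-vertex and $\Delta\ge\Delta_2$, one always has $|B_{uv}^+|\le|B_{uv}^-|$ directly.

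The real gap is Case~\ref{rem:1}. Your greedy scheme (embed $u$, walk the $u$--$v$ path, then place the leaves) with Lemma~\ref{lemma:k/4neighbors} does not close. In the first subcase $|B_{uv}|\le 3k/4$, during the path phase neither $f(N^+(u))$ nor $f(N^{\re v}(v))$ is in the image yet, so the triple in Lemma~\ref{lemma:k/4neighbors} contributes almost nothing and you get no usable bound; and in the leaf phase, $f(u)$ may have only $\lceil k/2\rceil$ out-neighbours while $\Delta$ can be close to $k/2$, so a single collision can block you. The paper instead embeds $u$ together with \emph{all} of $N^+(u)$ first, takes a maximal extension $T'$, and if $T'\ne B_{uv}$ uses a re-embedding argument: the blocking vertex $z$ has its full neighbourhood in $f(T')$, and either $N^{\re z}(f(z))$ is nearly disjoint from $f(N^+(u))$ (giving $|T'|>3k/4$ directly) or it hits a leaf at $u$, whence by maximality $N^+(f(u))\subseteq f(T')$ as well, and the $\mathcal K_{2,s}$-free condition forces $|T'|>3k/4$.

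The second subcase of~\ref{rem:1} is where your approach fails outright: here $|B_{uv}|$ can be as large as $k+1-(\Delta-\Delta_2)$, which is essentially $k$ when $\Delta\approx\Delta_2$, and no local greedy/re-embedding argument with pseudo-semidegree $\lceil k/2\rceil$ will suffice. The paper's move is entirely different: since $v$ is an in-vertex and $\Delta-\Delta_2<k/12$, one augments $B_{uv}$ by $\Delta-\Delta_2$ extra in-leaves at $v$ to obtain a caterpillar $B'_{uv}$ with $|(B'_{uv})^+|=|(B'_{uv})^-|$ and at most $k+1$ vertices, and then applies Proposition~\ref{prop:catmindeg} to $D'$ (whose density condition is exactly~\ref{cor:1}). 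This global caterpillar-embedding lemma, not Lemma~\ref{lemma:k/4neighbors}, is the tool that handles this subcase.
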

\begin{proof}
	For brevity, we write $\Delta, \Delta_2$ instead of $\Delta(T), \Delta_2(T)$, and $B_{uv}$ instead of $B_{uv}(T)$.
	We proceed according to the cases  of Subsection~\ref{rem:cases}. 
	\begin{itemize}
		\item {\bf Case \ref{rem:1}:}  (I) $|B_{uv}|\le 3k/4$, or  (II) $v$ is an in-vertex, $\Delta+\Delta_2< 7k/12$,   and $|B_{uv}|\le k+1-(\Delta-\Delta_2)$.

		First assume subcase (II). As $\Delta\ge \Delta_2\ge \lfloor k/4\rfloor +3$ and $\Delta + \Delta_2 < 7k/12$, we have $d:=\Delta-\Delta_2<k/12$ and $|T\setminus B_{uv}|\ge d$. 
		We consider a double broom $B'_{uv}$ obtained 
		from $B_{uv}$ by adding $d$ in-neighbours to $v$. Observe that $|B'_{uv}|\le k+1$ and that $|(B'_{uv})^+|=|(B'_{uv})^-|$.  
		So, regardless whether $|(D')^+|\le |(D')^-|$ or $|(D')^+|> |(D')^-|$, Proposition~\ref{prop:catmindeg} (which we can apply since  $D'$ satisfies condition~\ref{cor:1} of Lemma~\ref{cor:subdigraph}) provides an embedding 
		of $B'_{uv}$ and thus of $B_{uv}$ in $D'$.

		Suppose now we are in subcase (I), i.e.~$|B_{uv}|\le 3k/4$. 			Note that  
		$\Delta<\lfloor k/2\rfloor$, as by assumption, $\Delta_2\ge \lfloor k/4\rfloor+3$.  
		Recall that since we are in Case  \ref{rem:1},  we chose $D'$ in  Subsection~\ref{rem:cases} such that $\overline{\delta^0}(D')\ge \lceil k/2\rceil$. 		
		Embed $u$ in an arbitrary vertex $a\in D'$, and embed $N^+(u)$ into $N^+(a)$. 
		Let $T'\subseteq B_{uv}$ be a maximal tree with $N^+[u]\subseteq T'$ such that there is an embedding $f:V(T')\rightarrow V(D')$. 
		If $T'=B_{uv}$ we are done, so let us assume that $T'\ne B_{uv}$. Then there is a vertex $z\in T'$ having a neighbour $z'\in V(B_{uv})\setminus V(T')$. 
		Note that	
		$z\ne u$ and $N^{\re{z}}(f(z))\subseteq f(V(T'))$. 
		Observe that if $|N^{\re{z}}(f(z))\cap f(N^+(u))|\le 1$, then 
		\begin{align*}
		|B_{uv}|>|T'|&\ge |N^{\re{z}}(f(z))|+|N^+(u)|-1\\&\ge \lceil k/2\rceil + \Delta -1\ge \lceil k/2\rceil + \Delta_2-1
		\\& >{3k}/{4},
		\end{align*}
		contradicting the hypothesis $|B_{uv}|\le 3k/4$.  		
		Thus,  $N^{\re{z}}(f(z))\cap f(N^+(u))$ contains a vertex which is the image of a leaf $h$ of $B_{uv}$ adjacent to $u$. 
		The maximality of $T'$ implies 
		that there is no alternative for embedding $h$, and therefore,  $N^+(f(u))\subseteq f(V(T'))$. Hence, 
		\[|B_{uv}|>|T'|\ge |N^{\re{z}}(f(z))|+| N^+(f(u))|-(s-1)>3k/4,\]
		a contradiction as $|B_{uv}|\le 3k/4$ by hypothesis.

		\item {\bf Case \ref{rem:2.1}:} $|B_{uv}|>3k/4$ and  $D'$ satisfies conditions~\ref{cor:1},~\ref{cor:2}~and~\ref{cor-subdigraph-i} of Lemma~\ref{cor:subdigraph}. 
		
		Condition~\ref{cor-subdigraph-i}  ensures that $|(D')^+|\le |(D')^-|$. Moreover, since $\Delta\ge\Delta_2$ and since $u$ is an out-vertex
		we know that $|B_{uv}^+|\le |B_{uv}^-|$. So Proposition~\ref{prop:catmindeg} guarantees
		that $B_{uv}$ embeds in $D'$.

		\item{\bf Case \ref{rem:2.2}:} $|B_{uv}|> 3k/4$, $D'$ satisfies conditions~\ref{cor:1},~\ref{cor:2}~and~\ref{cor-subdigraph-ii} of Lemma~\ref{cor:subdigraph}, and  for each $a\in (D')^+$ we have $\deg^+_{D}(a)> k-r$.
		
		Note that since $D'$ satisfies condition~\ref{cor-subdigraph-ii}, there is a vertex $b\in D'$ with 
		\begin{equation}\label{bwithindegk}
			\deg^-_{D'}(b)\ge k.
		\end{equation}
		We will also frequently use that 
		\begin{equation}\label{semideg_again}
			\overline{\delta^0}(D')\ge \lceil k/2\rceil.
		\end{equation}
		
		Consider first the case when $r=k-\Delta< \lceil 5k/12\rceil$, which implies that $\Delta\ge  7k/12$, 
		and further, that every vertex in $(D')^+$ has out-degree greater than $\Delta$ in $D$. 
		We set $f(u)=a$ for an arbitrary $a\in (D')^+$, and map $N^+(u)$ into $N^+(a)$ so that $f(\NL_u)\subseteq D'$. 
		Since $D$ is \ksff, for each $b\in D'$, we have $|N^\star(b)\cap f(N^+[u])|\le  \lceil k/12\rceil$ for all $\star\in \{+,-\}$. 
		Furthermore, by~\cref{semideg_again}, for each vertex $b\in D'$ we have,  
		\[|N_{D'}^\star(b)\setminus f(N^+[u])|\ge  \lceil k/2\rceil-\lceil k/12\rceil\ge \lfloor 5k/12\rfloor\ge r,\] where $\star\in \{+,-\}$ is such that $\deg_{D'}^\star(b)\ge \lceil k/2\rceil$. Since  $|B_{uv}\setminus N^+[u]|\le k-\Delta=r$, we can greedily extend the embedding of $N^+[u]$ to 
		cover all of $B_{uv}$, using only vertices of~$D'$ in the extension.   
		
		So we can assume  that $r=\lceil 5k/12\rceil$, so $\Delta\le \lfloor 7k/12\rfloor$ and  
		\begin{equation}\label{degaD'712}
			\text{$\deg_{D}^+(a)\ge k-r+1\ge  \lceil 7k/12\rceil \ge \Delta$ for each $a\in (D')^+$.} 
		\end{equation}
		
		This makes it easy to embed $B_{uv}$ if it is a double-star. Indeed, we let $a$ be any in-neighbour of $b$, where $b$ is the vertex from~\cref{bwithindegk}. Set $f(u)=a$ and $f(v)=b$. 
		Embed $N^+(u)\setminus \{v\}$ into $N_{D}^+(a)\setminus \{b\}$ such that $f(\NL_u)\subseteq D'$, which is possible by~\cref{semideg_again}~and~\cref{degaD'712}. 
		Finally, embed $N^-(v)\setminus \{u\}$ into unused vertices of $N^-_{D'}(b)$, which is possible by~\cref{bwithindegk}. It is straightforward to 
		see that $f$ is a suitable embedding of $B_{uv}$ in $D$. Hence, we can assume that 
		\begin{equation}\label{notdoublestar}
			\text{$B_{uv}$ is not a double-star. }
		\end{equation}
		
		Based on the key properties of the vertices $u$ and $v$, we unify our approach by organising the argument into several cases that reflect these properties. To streamline the analysis, we relabel $\{u,v\}$ as $\{x,y\}$ so that we can identify certain specific properties. At this stage, we are not concerned with which of $u$ or $v$ is the vertex of highest degree and which is the second highest, and instead, just use the fact that both  have relatively high degree.

		Define $x,y\in\{u,v\}$  as follows:
		\begin{enumerate}
			\item[$(i)$] if $\Delta-\Delta_2\ge k/12$, we let $x=u$ and $y=v$; 
			\item[$(ii)$] if $\Delta-\Delta_2< k/12$ and $v$ is an out-vertex, we choose $y\in\{u,v\}$ such that $|\LL_y|\ge k/12$, 
			and let $x$ be the other one of $u,v$;
			\item[$(iii)$] if $\Delta-\Delta_2<k/12$ and $v$ is an in-vertex, we let $x=v$ and $y=u$. 
		\end{enumerate}
		Note that the choice of $y$ in case $(ii)$ is possible, as $|B_{uv}|\ge 3k/4$ and $\Delta_2\ge \lfloor k/4\rfloor +3$. Also note that  in
		case~$(i)$,  $x=u$  and $\Delta-\Delta_2\ge k/12$, and so, since $|B_{uv}|\ge 3k/4$, 
		\begin{equation}\label{caseiLx}
			\text{in case~$(i)$, we have that $|\LL_x|>k/12$.}
		\end{equation}
		We start by embedding $T_1:=N^{\re{x}}[x]$. 
		In cases $(i)$ and $(ii)$, we embed $T_1$ as follows: first, we embed $x$ in a vertex $a\in D'$
		such that $\deg^+_{D}(a)\ge \lceil 7k/12\rceil$, and then we embed $N^+(x)$ in $N^+(a)$, using $D-D'$ only for  vertices from $\LL_x$. 
		This is possible by~\cref{semideg_again} and~\cref{degaD'712}.
		In case $(iii)$, we embed   $x$ in  the vertex $b$ from~\cref{bwithindegk}, and embed $N^-(x)$ in $N^-(a)$. 		
		In either case, we found a suitable embedding of $T_1$, and in case $(iii)$, we have ensured that 
		\begin{equation}
			\label{eq:degree-k}
			\parbox{0.85\linewidth}{\textit{vertex $v=x$ is embedded in a vertex in $D'$ whose in-degree (in $D'$) is at least $k$,}} 
		\end{equation}
		while in cases $(i)$ and $(ii)$, we know that 
		\begin{equation}
				\label{eq:eeeee} 
				\deg^+_{D}(f(x))\ge \lceil 7k/12\rceil.
		\end{equation}

		Let $T'$ be a maximal subtree of $B_{uv}$ containing $T_1$ such that
		there is a  suitable   embedding $f:V(T')\rightarrow V(D)$, and such that if we are in case $(iii)$ then condition~(\ref{eq:degree-k}) holds, and otherwise condition~(\ref{eq:eeeee}) holds. We root $T'$ at $x$.
		Let $N_x$ be denote the subset of vertices in $N^{\re{x}}(x)$ which are not in the path joining $x$ and $y$. Since $\deg^{\re{x}}(x)\ge \Delta_2\ge \lfloor k/4\rfloor +3$, we have 
		\begin{equation}\label{eq:violet}
			|N_x|=|N^{\re{x}}(x)|-1\ge \lfloor k/4\rfloor +2.
		\end{equation}
		
		Our next objective is to establish a lower bound on $|T'|$. This lower bound shows that all of $B_{uv}$ has been embedded except for some leaves (in $B_{uv}$) that are adjacent to $y$. In other words,$B_{uv}\setminus T'$ consists of some children of $y$.

		\begin{claim}
			\label{claim:maximum-i}
			$|T'|>3k/4$. 
		\end{claim}

		\begin{myproof}
			For the sake of contradiction suppose that $|T'|\le {3k}/{4}$. 
			Then there are $z\in T'$ and $z'\in N_T^{\re{z}}(z)\setminus V(T')$.  The maximality of $T'$ implies that
			\begin{equation}\label{eqqqqx}
				N_{D'}^{\re{z}}(f(z))\subseteq f(T').\end{equation}
			and
			\begin{equation}\label{eqqqq}
				\text{if $N_{D'}^{\re{z}}(f(z))\cap f(N_x)\neq\emptyset$ then $N_{D'}^{\re{x}}(f(x))\subseteq f(T')$.} 
			\end{equation}
			Indeed, \cref{eqqqq} holds  since otherwise we could reembed a vertex $w$ from $N_x$ into a suitable vertex outside $f(T')$, and use $f(w)$ for $z'$, a contradiction to the maximality of $T'$. 
			
			Next, note that if $N_{D'}^{\re{z}}(f(z))\cap f(N_x)$ is empty, then by~\cref{semideg_again} and by~\cref{eq:violet}, we have
			\[|f(V(T'))|\ge \deg_{D'}^{\re{z}}(f(z))+|N_x| \ge  \lceil k/2\rceil+ \lfloor k/4\rfloor +2>{3k}/{4},\]
			contrary to our hypothesis that $|T'|\le 3k/4$. 
			So $N_{D'}^{\re{z}}(f(z))\cap f(N_x)\neq\emptyset$ and thus
			by~\cref{eqqqq},
			\begin{equation}\label{eqqqqxx}
				N_{D'}^{\re{x}}(f(x))\subseteq f(T').
			\end{equation}

			By~\cref{semideg_again},~\cref{eqqqq} and~\cref{eqqqqxx},
			\[|f(V(T'))|\ge \deg_{D'}^{\re{x}}(f(x))+\deg_{D'}^{\re{z}}(f(z))-(s-1)>{11k}/{12},\]
			resulting in a contradiction. This concludes the proof of the claim. 
		\end{myproof}

		By Claim~\ref{claim:maximum-i}, and as by assumption, $\Delta_2\ge \lfloor{k}/{4}\rfloor+3$, we know that $y\in V(T')$.
		If $T'=B_{uv}$ we are done, so we can assume there is a vertex 
		$y'\in {N^{\re{y}}(y)\setminus V(T')}$. Take $y'\in \LL_y$ if possible. 
		Setting 
		\[N^{\re{y}}(f(y)):=\begin{cases}
			N_{D}^{\re{y}}(f(y)) & \text{if $y'\in \LL_y$,} \\
			N_{D'}^{\re{y}}(f(y)) & \text{if $y'\in \NL_y$,} \\
		\end{cases} \]
		we have
		\begin{equation}
			\label{eq:neighbor-y}	
			N^{\re{y}}(f(y))\subseteq f(V(T')). 
		\end{equation}
		Note that whether $y'\in \LL_y$ or $y'\in \NL_y$ influences the lower bound on the number of neighbours of $f(y)$  included in $\im f$. Specifically, if $y'\in \LL_y$, we have $|N^{\re{y}}(f(y))\cap \im f|\ge \lceil 7k/12\rceil$  by~\eqref{degaD'712}, and if $y'\in \NL_y$, we obtain $|N^{\re{y}}(f(y))\cap \im f|\ge \lceil k/2\rceil$   by~\eqref{semideg_again}. This is precisely why we chose $y' \in \LL_y$ if possible, as this choice maximises the number of neighbors of $f(y)$ within $\im f$. 
		
		Next, we show that $N^{\re{y}}(f(y))$ and $f(N_x)$ are two disjoint subsets, both contained in $\operatorname{Im} f$, by~\eqref{eq:neighbor-y} and because of $N^{\re{x}}(x)\subseteq V(T')$. A key property of these subsets is their size: $|N^{\re{y}}(f(y))|\ge \lceil k/2\rceil$ by~\eqref{semideg_again}, and $|f(N_x)|\ge \lfloor k/4\rfloor + 2$ by~\eqref{eq:violet}. These subsets will play an important role in the subsequent stages of the proof.

		\begin{claim}
			\label{claim:intersection}
			$N^{\re{y}}(f(y))\cap f(N_x)=\emptyset$. 
		\end{claim}
		\begin{myproof}
			If case $(iii)$ holds, then by~\cref{eq:degree-k}, $f(x)$ has $k$ neighbours to choose from, and therefore we may assume that $N^{\re{y}}(f(y))\cap f(N_x)=\emptyset$, 
			which is precisely the claim. Indeed, if there exists a vertex $x'\in N_x$ such that $f(x')\in N^{\re{y}}(f(y))$, we could embed $y'$ into $f(x')$, and then reembed $x'$ into an unused vertex from $N_{D'}^{\re{x}}(f(x))$. This is feasible as $\deg_{D'}^{\re{x}}(f(x))\ge k$, thereby providing an embedding of $T'\cup \{y'\}$, and contradicting the maximality of $T'$. Thus, we can assume that either case $(i)$ or case $(ii)$ holds. Observe that 
			in either instance, $x$ is an out-vertex. 
			Suppose the claim is false, and let $x'\in N_x$ with $a:=f(x')\in N^{\re{y}}(f(y))$. We take $x'\in \LL_x$ if possible. 
			
			Note that if $|N_{D}^+(f(x))\cap f(V(T'))|\ge \lceil 7k/12\rceil$, by~\eqref{semideg_again} and since $D$ is \ksff, we have
			\[|f(V(T'))|\ge |N_{D}^+(f(x))\cup N_{D'}^{\re{y}} (f(y))| \ge \lceil 7k/12\rceil +\lceil k/2\rceil-(s-1)>k,\]
			a contradiction as $T'\neq  T$, and so $|f(V(T'))|\le k$. Therefore, we must have
			\begin{equation}\label{eq:extra}
				|N_{D}^+(f(x))\cap f(V(T'))|< \lceil 7k/12\rceil.
			\end{equation}
			By~\eqref{eq:eeeee}, there exists a vertex  $b\in N_{D}^+(f(x))\setminus f(V(T'))$.  
			If $b\in D'$, or if  $x'\in \LL_x$, then we can reembed $x'$ into $b$, and embed $y'$ into $a$. This provides an embedding of $T'\cup \{y'\}$, which is a contradiction to the maximality of $T'$. 
			So we can assume that 				
			\begin{equation}
				\label{eq:x-neighborhood}					
				N^+_{D'}(f(x))\subseteq f(V(T')), 
			\end{equation}
			and that $x'\notin \LL_x$. 
			
			Suppose case~$(i)$ holds. Observe that if all vertices in $\LL_x$ are embedded in vertices of $D-D'$, by~\eqref{caseiLx}~and~\eqref{eq:x-neighborhood}, we would have $|N_{D}^+(f(x))\cap f(V(T'))|\ge \lceil 7k/12\rceil$, a contradiction to~\eqref{eq:extra}. 
			Therefore, there must exist $x^*\in \LL_x$ such that $f(x^*)\in N^+_{D'}(f(x))$. We can then redefine the embedding $f$ so that $x^*$ is embedded into $a=f(x')$ and $x'$ into $f(x^*)$. In this updated embedding, we can select $x^*\in \LL_x$ with $a=f(x^*)\in N^{\re{y}}(f(y))$, allowing us to proceed as above. This completes the proof for case~$(i)$. 

			Suppose now that case~$(ii)$ holds. 
			Observe that if $y'\in \LL_y$, then $N^{\re{y}}(f(y))=N^{\re{y}}_{D}(f(y))$, 
			and by~\cref{degaD'712}~and~\cref{eq:neighbor-y}, 
			we have $|N^{\re{y}}(f(y))\cap f(V(T'))|\ge \lceil 7k/12\rceil$. So by~\cref{eq:x-neighborhood}, we obtain 
			\[|f(V(T'))|\ge |N^+_{D'}(f(x)) \cup N_{D}^{\re{y}}(f(y)) |\ge \lceil k/2\rceil+ \lceil 7k/12\rceil -(s-1)>k,\]
			a contradiction as $T'\ne T$, and so $|f(V(T'))|\le k$. Therefore, we must have $y'\in \NL_y$, and furthermore, there exists a vertex $b\in N^{\re{y}}(f(y))\setminus f(V(T'))$. By the maximality of $T'$, we must have  $b\notin V(D')$.  
			Since $y'\in \NL_y$, all vertices in $\LL_y$ have already been embedded, and given that
			$|\LL_y|\ge k/12$ by the choice of $y$, there must exist a vertex $y^*\in \LL_y$ such that $f(y^*)\in V(D')$. Hence, we can reembed 
			$y^*$ into $b$ and embed $y'$ into $f(y^*)$. This corresponds to an embedding of $T'\cup\{y'\}$, which contradicts the maximality of $T'$.  
		\end{myproof}
		
		Let $p_y$ be the parent of $y$. By~\cref{notdoublestar}, $p_y\ne x$, and thus 
		\begin{equation*}
			N^{\re{p_y}}_{D'}(f(p_y))\setminus f(V(T'))\ne \emptyset,
		\end{equation*}
		as otherwise, using~\cref{semideg_again} for $f(y)$ and $f(p_y)$,   we obtain
		\begin{align*}
			|f(V(T'))|&\ge \deg_{D'}^{\re{y}}(f(y))+\deg_{D'}^{\re{p_y}}(f(p_y))+\deg_T^{\re{x}}(x)-3(s-1)\\&\ge k+\Delta_2-3(s-1)>k,
		\end{align*}
		a contradiction as  $T'\neq T$, and so $|f(V(T'))|\le k$. 
		Thus, there is a vertex  \[b\in N^{\re{p_y}}_{D'}(f(p_y))\setminus f(V(T')). \]
		
		Consider an alternative maximal subtree $T''$ of $B_{uv}$ such that 
		there exists a suitable embedding $g:V(T'')\to V(D)$ satisfying the following conditions: 
		$g(t)=f(t)$ for each $t\in T'\setminus \{y\}$ that is not a leaf in $B_{uv}$, 
		and $g(y)=b$. Because of the maximality of $T'$, 	we may assume  that $T''\subseteq T'$, and therefore $y'\notin T''$. Let 
		\[N^{\re{y}}(b):=\begin{cases}
			N_{D}^{\re{y}}(b) & \text{if $y'\in \LL_y$,} \\
			N_{D'}^{\re{y}}(b) & \text{if $y'\in \NL_y$.} \\
		\end{cases} \]
		Let $H:=f(V(T')\setminus (N^{\re{y}}[y]\setminus \{p_y\}))$, 
		$R_1:=N^{\re{y}}(b)\setminus H$, $R_2:=N^{\re{y}}(b)\cap f(N_x)$, and let $r_1:=|R_1|$ and $r_2:=|R_2|$. Since $R_1\cap V(H)=\emptyset$ and $R_2\subseteq f(N_x)\subseteq V(H)$, it follows that $R_1$ and $R_2$ are disjoint. 
		
		Observe that \begin{equation}
			\label{eq:r_1<}
			r_1< \deg^{\re{y}}(y)-1,
		\end{equation}
		as otherwise we could reembed $y$ into $b$, and (re)embed the neighbours of $y$, except $p_y$, into $R_1$, obtaining an embedding of all $B_{uv}$,  which
		contradicts the maximality of $T'\ne B_{uv}$.   In particular, inequality~\eqref{eq:r_1<} provides an immediate upper bound on $|N^{\re{y}}(b)\cap \im f|$, the number of arcs that $b$ sends to $\im f$. Additionally, the set $N^{\re{y}}(b)\cap \im f$, along with $N^{\re{y}}(f(y))$ and $f(N_x)$, will play an important role in the remainder of the proof. With this in mind, we next provide an upper bound on $r_1+r_2$.

		\begin{claim}
			\label{claim:second-optionx}
			We have $r_1+r_2< \deg^{\re{y}}(y)-1$. 
		\end{claim}
		\begin{myproof}
			For the sake of a contradiction, suppose that 
			\begin{equation}
				\label{eq:r_1+r_2}
				r_1+r_2\ge \deg^{\re{y}}(y)-1.
			\end{equation} 			
			First, we consider case $(iii)$. Let $P_{uv}$ be the (unique) path joining $u$ and $v$ in $T$. 
			Define an embedding $g$ as follows: $g(t)=f(t)$ for each $t\in P_{uv}\setminus \{y\}$, 
			$g(y)=b$,  $N^{\re{y}}(y)$ is embedded in 
			vertices of $R_1\cup R_2$ (which is feasible because of~\cref{eq:r_1+r_2}), and $N^-(x)\setminus V(P_{uv})$ is embedded in unoccupied 
			vertices of $N^-(g(x))$ (recall that $g(x)=f(x)$ and so by~\cref{eq:degree-k}, $\deg^-(g(x))\ge k$). 
			It is straightforward to see that $g$ is a suitable embedding of $B_{uv}$, and furthermore, 
			condition~\cref{eq:degree-k} is fulfilled; this contradicts the maximality of $T'\ne B_{uv}$.    We can thus assume we are in  case $(i)$ or in case $(ii)$. 
			
			Suppose now that $|\LL_x|\ge {k}/{12}$, which by~\cref{caseiLx} is true in case $(i)$, but may also occur in case~$(ii)$. 
			Choose a subset $R^*_2\subseteq R_2$ such that $|R_1|+|R_2^*|=\deg^{\re{y}}(y)-1$, which 
			is possible by~\cref{eq:r_1+r_2}. Observe that $R^*_2\ne \emptyset$ by \cref{eq:r_1<}. 
			We define an embedding $g$ as follows: $g(t)=f(t)$ for each $t\in f^{-1}(H\setminus R^*_2)$, $g(y)=b$, 
			$N^{\re{y}}(y)$ is embedded in vertices of $R_1\cup R^*_2$, and a maximum set~$N'_x$ of vertices from 
			$N_x$ is embedded in unused vertices of $N_{D}^{\re{x}}(g(x))$, 
			with the vertices of~$\NL_x$  embedded in vertices of $N_{D'}^{\re{x}}(g(x))$ (possible as $|\LL_x|\ge {k}/{12}$). 
			Let $T^*$ be the corresponding  subtree of $T$. The maximality of $N'_x$ implies that $N^{\re{x}}_{D}(g(x))\subseteq g(V(T^*))$.  
			Moreover, by the definition of $H$, $g$ and $T^*$, note that $N^{\re{y}}_{D'}(b)\subseteq g(V(T^*))$, 
			because $N^{\re{y}}_{D'}(b)\cap H \subseteq g(V(T^*))$ and $N^{\re{y}}_{D'}(b)\setminus H=R_1\subseteq g(V(T^*))$.
			Thus, by~\cref{eq:eeeee}, we  have
			\[|g(V(T^*))|\ge |N^{\re{x}}_{D}(g(x))\cup N^{\re{y}}(b)|\ge \lceil 7k/12\rceil +\lceil k/2\rceil-(s-1)>k,\]  
			a contradiction since $T^*\neq B_{uv}$ by the maximality of $T'$, and so $|T^*|\le k$. 
			
			Finally, suppose that  $|\LL_x|<{k}/{12}$. Then $(ii)$ holds and,  because of our bound on $\Delta_2$, we have $|\NL_x|>{k}/{6}$. 
			Thus 
			\begin{equation}\label{Buv5k6}
				|B_{uv}|\le {5k}/{6}. 
			\end{equation}
			Since we are in case~$(ii)$,  $|\LL_y|\ge {k}/{12}$, 
			and so we used up all of $N^{\re{y}}_{D-D'}(f(y))$ when embedding~$\LL_y$. Hence  $N^{\re{y}}_{D}(f(y))\subseteq f(V(T'))$. 
			Moreover, by definition of $H$ and $R_1$ we have 
			$N^{\re{y}}_{D}(b)\setminus f(V(T'))\subseteq R_1$. So,  by~\cref{degaD'712} and~\cref{eq:r_1<},
			\[|N^{\re{y}}_{D}(b)\cap f(V(T'))|\ge \lceil 7k/12\rceil -r_1> \lceil 7k/12\rceil -\deg^{\re{y}}(y)+1.\] 
			Combining this fact
			with Claim~\ref{claim:intersection}, and using~\cref{degaD'712} and~\cref{eq:eeeee}, we obtain the following inequality: 
			\begin{align*}
				|f(V(T'))| & \ge |N^{\re{y}}_{D}(f(y))\cup N^{\re{y}}(b)\cup N^{\re{x}}_{D}(f(x))| \\ & \ge \lceil 7k/12\rceil+\lceil 7k/12\rceil-\deg^{\re{y}}(y)+1\\& \qquad +\deg^{\re{x}}(x)-1-2(s-1).\end{align*}
			Regardless of how $x$ and $y$ were selected (from the set $\{u,v\}$), we have  
			\[\deg^{\re{x}}(x)-\deg^{\re{y}}(y)> -{k}/{12}\] 
			because we are in case $(ii)$ and therefore $\Delta-\Delta_2<{k}/{12}$. 
			Thus, 
			\[|f(V(T'))|> {13k}/{12}-2(s-1)> {11k}/{12};\]
			contrary to~\cref{Buv5k6} as $T'\subseteq B_{uv}$. 
			This completes the proof of the claim. 
		\end{myproof}

		By~\cref{semideg_again} and by~\cref{degaD'712}, and using 
		Claim~\ref{claim:second-optionx} as well as the definition of $r_1$, we obtain that		
		\begin{equation*}
			|N^{\re{y}}(b)\cap \im f|\ge \begin{cases}
				\lceil 7k/12\rceil-\deg^{\re{y}}(y)+r_2+1 & {\parbox[t]{.25\textwidth}{if $y$ is an out-vertex and $|\LL_y|\ge {k}/{12}$,}}\\
				\lceil k/2\rceil-\deg^{\re{y}}(y)+r_2+1 & \textrm{otherwise.} \\
			\end{cases}
		\end{equation*}

		By Claim~\ref{claim:intersection}, the following holds. 
				\begin{align*}
		|f(V(T'))|&\ge |N^{\re{y}}(f(y))|+|N^{\re{y}}(b)\cap f(V(T'))|+|f(N_x)|\\
		&\qquad -|N^{\re{y}}(f(y)) \cap N^{\re{y}}(b)|-r_2. 
		\end{align*}
		We will use these two facts to finish the proof, distinguishing several cases, always using~\cref{semideg_again} and \cref{degaD'712}. 
		\begin{itemize}
			\item Suppose $(i)$ holds, so we have $\Delta-\Delta_2\ge{k}/{12}$, $x=u$ and $y=v$. Then, 
			\begin{align*}
			|f(V(T'))|&\ge \lceil k/2\rceil+\left(\lceil k/2\rceil-\Delta_2+r_2+1\right)+(\Delta -1)-(s-1)-r_2\\ &\ge k+1, 
		\end{align*}
			a contradiction. 
			\item Suppose $(ii)$ holds, so 
			$\Delta-\Delta_2<{k}/{12}$  (and therefore $\deg(x)-\deg(y)\ge -k/12$), and further, 
			$y$ is an out-vertex with $|\LL_y|\ge {k}/{12}$. 
			So, 
			\begin{align*}
			|f(V(T'))|&\ge \lceil 7k/12\rceil+\left(\lceil 7k/12\rceil-\deg^{\re{y}}(y)+r_2+1\right)\\ &\qquad +\left(\deg^{\re{x}}(x)-1\right) -(s-1)-r_2 \\ & \ge k+1, 
		\end{align*}
			a contradiction. 
			\item Suppose $(iii)$ holds and that $|\LL_y|\ge {k}/{12}$, so 
			$\Delta-\Delta_2<k/12$ and $y=u$ is an out-vertex. As in the previous case, we have 
			\begin{align*}
			|f(V(T'))|& \ge \lceil 7k/12\rceil+\left(\lceil 7k/12\rceil-\deg^{\re{y}}(y)+r_2+1\right)\\ &\qquad+\left(\deg^{\re{x}}(x)-1\right)-(s-1)-r_2\\ &\ge k+1, 
		\end{align*}
			a contradiction. 
			\item Suppose $(iii)$ holds and that $|\LL_y|<k/12$. Then
			$\Delta-\Delta_2<k/12$, 
			$y=u$ and, because of our bound on $\Delta_2$, we have $|\NL_y|>k/6$,
			and thus $|B_{uv}|\le 5k/6$. 
			So, 
			\begin{align*}
			|f(V(T'))|&\ge \lceil k/2\rceil+\left(\lceil k/2\rceil-\deg^{\re{y}}(y)+r_2+1\right)\\&\qquad +\left(\deg^{\re{x}}(x)-1\right)-(s-1)-r_2\\ &\ge{5k}/{6}+1, 
		\end{align*}
			a contradiction. 
		\end{itemize}
		In each case we obtain a contradiction, which concludes the proof. 		
	\end{itemize}
	
\end{proof}

\subsection{Extending the embedding: from $B_{uv}$ to $T$}
\label{subs:extension}

In this subsection, we execute step 3 of our strategy: given an embedding of $B_{uv}$, we show that 
it can be extended to all of $T$. 
This is all that is needed to complete the proof of Lemma~\ref{lemma:third}.

\begin{proof}[Proof of Lemma~\ref{lemma:third}]
	Again, for brevity, we write $\Delta, \Delta_2$ instead of $\Delta(T), \Delta_2(T)$, and $B_{uv}$ instead of $B_{uv}(T)$, where $u,v\in V(T)$ are such that $\deg^+(u)=\Delta$ and $\deg^{\re{v}}(v)=\Delta_2\ge k/4+3$. Let $D'$ be the subgraph from Subsection~\ref{rem:cases}, 
	and let $P_{uv}$ be the (unique) path joining $u$ and $v$ in $T$. 
	
	We apply Lemma~\ref{lemma:thirdpart2} to find an embedding of $B_{uv}$ into $D'$ in cases~\ref{rem:1} and ~\ref{rem:2.1}, and a suitable embedding of $B_{uv}$ into $D$ in case~\ref{rem:2.2}. Next, we show that in both cases ~\ref{rem:1} and ~\ref{rem:2.2}, this embedding can be extended to cover all of $T$. 
	
	\begin{claim}
		\label{claim:outside-caterpillar} 
		If 
		\begin{itemize}
			\item in case \ref{rem:1}, there is an embedding of $B_{uv}$ in $D'$, and, 
			\item in case \ref{rem:2.2}, there is a suitable embedding of $B_{uv}$ in $D$.
		\end{itemize}
		Then $T$ embeds in $D$. 
	\end{claim}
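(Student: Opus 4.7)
The plan is to extend the given embedding of $B_{uv}$ to an embedding of all of $T$ by a greedy BFS procedure. Root $T$ at $u$ and process the vertices of $V(T) \setminus V(B_{uv})$ in order of increasing distance from $u$. For each such vertex $x$ with parent $p_x$ already embedded, choose $f(x)$ from the unused vertices of $N^{\re{x}}_{D'}(f(p_x))$; in case~\ref{rem:2.2}, when $x$ is a leaf of $T$, we may additionally draw $f(x)$ from $N^{\re{x}}_{D}(f(p_x)) \setminus V(D')$, preserving suitability. Every such parent $p_x$ is a non-leaf of $T$, so in case~\ref{rem:2.2} we automatically have $f(p_x) \in V(D')$.

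Let $T' \subseteq T$ with $B_{uv} \subseteq T'$ be a maximal subtree to which the embedding extends to a (suitable, in case~\ref{rem:2.2}) map $f$, and suppose for contradiction that $T' \neq T$. Pick $w \in V(T')$ with an unembedded child $w' \in V(T)$. Because $N^{+}[u] \cup N^{\re{v}}[v] \subseteq V(B_{uv}) \subseteq V(T')$, we have $w \notin \{u,v\}$; and since $w$ has a child in $T$, it is a non-leaf of $T$, so in case~\ref{rem:2.2} we also have $f(w) \in V(D')$. The maximality of $T'$ forces $N^{\re{w'}}_{D'}(f(w)) \subseteq f(V(T'))$, and therefore $|N^{\re{w'}}(f(w)) \cap f(V(T'))| \ge \lceil k/2 \rceil$ by the hypothesis $\overline{\delta^0}(D') \ge \lceil k/2 \rceil$.

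The core step is to invoke Lemma~\ref{lemma:k/4neighbors} for the triple $(f(u), +), (f(v), \re{v}), (f(w), \re{w'})$ with $S := f(V(T'))$, noting that $|S| \le k$ since $T' \neq T$. Since $f(u)$ has all $\Delta$ of its out-neighbours (the images of $N^+(u) \subseteq V(B_{uv})$) in $S$ and $f(v)$ has all $\Delta_2$ of its $\re{v}$-neighbours in $S$, the lemma yields
\[
\Delta + \Delta_2 + \lceil k/2 \rceil \;\le\; |N^+(f(u)) \cap S| + |N^{\re{v}}(f(v)) \cap S| + |N^{\re{w'}}(f(w)) \cap S| \;<\; \tfrac{5k}{4},
\]
so $\Delta + \Delta_2 < 3k/4$. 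In case~\ref{rem:2.2}, when $w'$ is an in-vertex, condition~\ref{cor-subdigraph-ii} upgrades the third term via $\deg^+_D(f(w)) > k - r \ge 7k/12$, strengthening the conclusion to $\Delta + \Delta_2 < 2k/3$.

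These inequalities are then compared with the structural bounds on $|B_{uv}|$. In case~\ref{rem:2.2} one has $|B_{uv}| > 3k/4$, which together with $|B_{uv}| \le \Delta + \Delta_2 + d_T(u,v) - 1$ and $\Delta_2 \ge \lfloor k/4\rfloor + 3$ contradicts $\Delta + \Delta_2 < 2k/3$ after accounting for short $u$-$v$ paths. In case~\ref{rem:1}(I), the bound $|B_{uv}| \le 3k/4$, combined with $\Delta + \Delta_2 < 3k/4$ and $\Delta_2 \ge \lfloor k/4\rfloor + 3$, likewise forces a contradiction via a short computation. The main obstacle is case~\ref{rem:1}(II), where $v$ is an in-vertex, $\Delta + \Delta_2 < 7k/12$ and $|B_{uv}| \le k + 1 - (\Delta - \Delta_2)$: here the derived inequality $\Delta + \Delta_2 < 3k/4$ is already implied by the hypothesis, so the three-vertex bound alone is insufficient. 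The plan for this subcase is to exploit the slack $k + 1 - (\Delta - \Delta_2) - |B_{uv}| \ge 0$ together with $\overline{\delta^-}(D') \ge \lceil k/2 \rceil$ to reembed one of the in-neighbours of $v$ into a currently unused in-neighbour of $f(v)$, freeing a vertex of $f(V(T'))$ that can then host $w'$; if even this is obstructed, a second invocation of Lemma~\ref{lemma:k/4neighbors}, using a fourth carefully chosen vertex inside $V(T')$, will complete the contradiction.
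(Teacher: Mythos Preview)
Your setup is right: take a maximal (suitable) extension $T'\supseteq B_{uv}$, find $w$ with an unembedded child, and deduce $N^{\re{w}}_{D'}(f(w))\subseteq f(V(T'))$. But applying Lemma~\ref{lemma:k/4neighbors} directly to the triple $f(u),f(v),f(w)$ is too weak. It only yields $\Delta+\Delta_2+\lceil k/2\rceil<5k/4$, i.e.\ $\Delta+\Delta_2<3k/4$, and since we merely know $\Delta\ge\Delta_2\ge\lfloor k/4\rfloor+3$, this is perfectly consistent (take $\Delta=\Delta_2=\lfloor k/4\rfloor+3$). Your subsequent case analysis does not rescue this: in case~\ref{rem:1}(I) the inequalities $|B_{uv}|\le 3k/4$ and $\Delta+\Delta_2<3k/4$ are compatible, not contradictory; in case~\ref{rem:2.2} the path $P_{uv}$ may be long, so $|B_{uv}|>3k/4$ places no useful lower bound on $\Delta+\Delta_2$; and in case~\ref{rem:1}(II) you yourself note the bound is already weaker than the hypothesis and only sketch a plan.

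The missing idea is a \emph{reembedding step} that produces a second vertex whose full $D'$-neighbourhood (of size $\ge\lceil k/2\rceil$) is trapped in $f(V(T'))$. Concretely: among the $\ge\lceil k/2\rceil$ preimages of $N^{\re{w}}_{D'}(f(w))$ inside $T'$, one can find a leaf $x$ of $T$ that is not a neighbour of $w$ (there are more than $\lceil k/2\rceil$ such leaves in $T'$, since $\Delta,\Delta_2>\lfloor k/4\rfloor+2$ and $|T'|\le k$). If $f(p_x)$ had an unused $\re{p_x}$-neighbour in $D'$, one could move $x$ there and place $w'$ at $f(x)$, contradicting maximality; hence $N^{\re{p_x}}_{D'}(f(p_x))\subseteq f(V(T'))$ as well. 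Now take $z\in\{u,v\}\setminus\{p_x\}$, which contributes a further $>\lceil k/4\rceil$ embedded neighbours, and the three sets together force $|f(V(T'))|>\lceil k/2\rceil+\lceil k/2\rceil+\lceil k/4\rceil-3(s-1)>k$, the desired contradiction. This single argument covers cases~\ref{rem:1} and~\ref{rem:2.2} uniformly, with no separate case analysis needed.
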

	\begin{myproof}  
		Let $T'$ be a maximal subtree of $T$ such that 
		$B_{uv}\subseteq T'$ and there is an embedding of $T'$ in $D'$ in case \ref{rem:1}, or  there is a suitable embedding of $T'$ in $D$
		in case~\ref{rem:2.2}. 
		Suppose, for the sake of a contradiction, that $T'\ne T$. Let us consider $T$ as a rooted tree with $u$ as its root. 
		Let $w\in T'$ such that $\deg_{T'}^{\re{w}}(w) <\deg_{T}^{\re{w}}(w)$, so, in particular, $w\in \NL$, and thus 
		$\deg_{D'}^{\re{w}}(f(w))\ge \lceil k/2\rceil$.
		Further, the maximality of $T'$ implies that 
		\begin{equation}
			\label{eq:neighbors-ww}
			N_{D'}^{\re{w}}(f(w))\subseteq f(T'). 
		\end{equation}
		
		Let us now show that $|\LL_{T'}\setminus \LL_w|> \lceil k/2\rceil$. Indeed, since $B_{uv}\subseteq T'$, 
		every vertex in $N^+(u)\cup N^{\re{v}}(v)$ 
		that is not in the path joining $u$ and $v$, and is not a predecessor of $w$, is either a leaf of $T'$ or has a descendant  that is a leaf of $T'$. Given that
		$\Delta\ge \Delta_2\ge \lfloor{k}/{4}\rfloor+3$, we conclude that
		$|\LL_{T'}\setminus \LL_w|\ge \Delta+\Delta_2-3>\lceil k/2\rceil$, as desired. 
		Thus, there exists $x\in \LL_{T'}\setminus \LL_w$ with $f(x)\in N_{D'}^{\re{w}}(f(w))$, because $\deg_{D'}^{\re{w}}(f(w))\ge \lceil k/2\rceil$
		and $|V(T')|\le k$. 
		Fix one such vertex $x\in \LL_{T'}\setminus \LL_w$, and let $p_x$ be  its parent. 
		The maximality of $T'$ together with the fact that $x$ is a leaf of $T$ (and thus can easily be reembedded) implies that  
		\begin{equation}
			\label{eq:neighbors-y}
			N_{D'}^{\re{p_x}}(f(p_x))\subseteq f(V(T')).
		\end{equation}
		Observe that it may happen that $p_x\in \{u,v\}$;   regardless of this, we may take $z\in\{u,v\}\setminus \{p_x\}$. Then, as  $\Delta\ge \Delta_2\ge \lfloor{k}/{4}\rfloor+3$,
		\begin{equation}
			\label{eq:neighbors-z}
			|N_{D}^{\re{z}}(f(z))\cap f(V(T'))|> \lceil k/4\rceil.
		\end{equation} 
		Now,~(\ref{eq:neighbors-ww}),~(\ref{eq:neighbors-y})~and~(\ref{eq:neighbors-z}), along with~\cref{semideg_again} and the fact that $D$ is a \ksf
		digraph, imply that 
		\begin{align*}
			|f(V(T'))| &\ge  |N_{D'}^{\re{w}}(f(w))\cup N_{D'}^{\re{p_x}}(f(p_x))\cup  (N_{D}^{\re{z}}(f(z))\cap f(V(T')))|\\
			&> \lceil k/2\rceil+\lceil k/2\rceil+\lceil k/4\rceil-3(s-1)>k,
		\end{align*}
		which contradicts the hypothesis that $T'\ne T$. 
	\end{myproof}

	Because of Claim~\ref{claim:outside-caterpillar}, we assume that we are in case \ref{rem:2.1}. 
	Our aim is to embed $T$ in $D'$. 
	Since we are not in case~\ref{rem:1}, we have 
	\begin{equation}
		\label{eq:B>3k/4}
		|B_{uv}|> 3k/4.
	\end{equation}
	First suppose that $\Delta\ge \lceil 7k/12\rceil$, which means that $r=k-\Delta$.  
	Since $\Delta\ge \lceil 7k/12\rceil$ and $\Delta_2\ge \lfloor k/4\rfloor+3$, we have $|B_{uv}|\ge 5k/6+2$, which implies
	$|\NL_u|\le |\NL|\le  k/6-1$. 
	Let $T^*:=T\setminus \LL_u$, $\NL'_u:=\NL_u\setminus V(P_{uv})$ and let $D_u$ be the set of descendant vertices of some 
	$u'\in \NL'_u$. We have 
	\begin{equation}
		\label{eq:T^*}
		|T^*|=k+1-(\Delta-1)+|\NL'_u|\le  r+k/6.
	\end{equation}
	We define an embedding $f$ of $T^*$, and then  
	extend $f$ to an embedding of all of $T$. 
	We start by mapping~$u$ to a vertex $a\in D'$ with $\deg^+_{D'}(a)\ge k$. Next, we greedily embed all vertices
	of $T^*\setminus (\NL'_u\cup D_u)$, which is possible because $|T^*\setminus (\NL'_u\cup D_u)|\le r+1$ and 
	$\overline{\delta^0}(D')\ge r$.   We then embed the vertices of $\NL'_u$ into unused vertices of $D'$, which is possible because 
	$\deg_{D'}^+(a)\ge k$. Subsequently, we greedily embed the vertices of $D_u$. If this is not possible, there is
	a vertex $z\in T^*$ with $N^{\re{z}}(f(z))\subseteq \im f$. As $D'$ is \ksf and since $\deg^{\re{v}}(v)\ge \lfloor k/4\rfloor +3$, 
	there is a subset $X\subseteq f(N^{\re{v}}(v))\setminus N^{\re{z}}(f(z))$ with $|X|> k/6$. However, since $\deg^{\re{z}}(f(z))\ge r$, this would imply that
	\[|f(V(T^*))|\ge |N^{\re{z}}(f(z))|+ |X|> r+k/6,\]
	contradicting \cref{eq:T^*}. Thus, the embedding of $D_u$ is feasible. Finally, since $\deg^+(a)\ge k$, we can 
	embed the vertices of $\LL_u$ into unused vertices of $D'$, resulting in an embedding of $T$ in $D'$.

	So,   we can assume
	that $\Delta\le \lfloor 7k/12\rfloor$, and thus $r=\lceil {5k}/{12}\rceil$. 
	Then, 
	\begin{equation}
		\label{eq:mindegsec7.2}\overline{\delta^+}(D')\ge \lceil k/2\rceil\text{ and }\overline{\delta^-}(D')\ge \lceil 5k/12\rceil.	\end{equation}
	
	From this point on, our strategy is structured as follows:

	\begin{itemize}
		\item[{\bf Step 1:}] We first consider a maximal subtree $T_1$ with $B_{uv}\subseteq T_1\subseteq T$ such that there exists an embedding $f_1:V(T_1) \to V(D')$. Assuming $T_1 \neq T$ (as otherwise the proof is complete), we then select a vertex $w$ in $T_1$ with an unembedded neighbour, say $w'$. The maximality of $T_1$ ensures that $N^{\re{w}}(f_1(w)) \subseteq \im f_1$.
		
		\item[{\bf Step 2:}] Now, we proceed by selecting a specific vertex $x^*$ from $T_1$, which possesses certain properties that are essential for the next embedding step. We will embed the unembedded neighbour of $w$ in $f_1(x^*)$.  After embedding $w'$, we define a maximal subtree $T_2$ of $T$, for which there exists an embedding $f_2 : V(T_2) \to V(D')$. Notably, $T_2$ will include a vertex $z \ne  w$ with an unembedded neighbour. We will show that $z\in \{u,v\}$. Furthermore, by construction of $T_2$, we have $N^{\re{w}}(f_2(w))\subseteq \im f_2$, and by the maximality of $T_2$, we have $N^{\re{z}}(f_2(z))\subseteq \im f_2$. 
		
		\item[{\bf Step 3:}] Next, we set $y:=\{u,v\}\setminus \{z\}$, and prove that the closed neighbourhood of $y$ is contained into $T_2$, implying that $f_2(N^{\re{y}}(f_2(y))\subseteq \im f_2$.
		To summarise, the sets $f_2(N^{\re{y}}(f_2(y))$, $N^{\re{w}}(f_2(w))$ and $N^{\re{z}}(f_2(z))$ are all contained in $\im f$, with the first of these sets having at least $\lceil k/4\rceil +3$ vertices, while each of the other two has at least $\lceil 5k/12\rceil$ vertices. 
				
		\item[{\bf Step 4:}] Finally, we establish upper bounds on the pairwise intersections of the sets $f_2(N^{\re{y}}(y))$, $N^{\re{w}}(f_2(w))$, and $N^{\re{z}}(f_2(z))$, and use these to prove  that $|f_2(V(T_2))| > k$. This leads to a contradiction, as it implies $T_2 = T$, completing the proof.
	\end{itemize} 
	With this strategy established, we now proceed with the detailed execution of each step.

	Let $T_1$ be a maximal subtree of $T$ such that $B_{uv}\subseteq T_1$ and there exists an 
	embedding $f_1:V(T_1)\rightarrow V(D')$. 
	If  $T_1=T$ there is nothing to show, so suppose $T_1\ne T$. 
	Let $w\in T_1$ with $\deg_{T_1}^{\re{w}}(w) <\deg^{\re{w}}_{T}(w)$, and let 
	$w'\in N^{\re{w}}(w)\setminus V(T_1)$. Note that $w\notin \{u,v\}$, because $B_{uv}\subseteq T_1$. 
	The maximality of $T_1$ implies that 
	\begin{equation}
		\label{eq:neighbors-w}
		N^{\re{w}}(f_1(w))\subseteq f_1(V(T_1)).
	\end{equation} 
	This completes {\bf Step 1} of the outlined strategy. 
	
	Let $R'_w$ be the shortest path 
	joining $w$ and some vertex, say $p$, of $P_{uv}$. Let $R_w$ be the union of $R'_w$ and all neighbours of $w$ that belong to $T_1$. 
	Since $|B_{uv}|>{3k}/{4}$ by~\cref{eq:B>3k/4}, 	and $|R_w\cap B_{uv}|\le 2$ (it may happen that $p\in \{u,v\}$, so $2$ is a possibility), 
	it follows that $|R_w|\le |T_1\setminus B_{uv}|+2\le {k}/{4}+2$. 
	Let \[X:=\{x\in T_1\setminus R_w:f_1(x)\in N^{\re{w}}(f_1(w))\}.\] 
	Notice that $N^{\re{w}}_{T_1}(w)\subseteq R_w$, and so $X\cap N_{T_1}^{\re{w}}(w)=\emptyset$. 
	Furthermore, note that $k \geq 13$; otherwise, $s = \lceil k/12 \rceil = 1$, and $D$ contains no orientation of $K_{1,2}$ because $D$ is \ksff. This contradicts the fact that $T_1 \supseteq B_{uv}$ is already embedded into $D' \subseteq D$. 
	Then, since $\deg^{\re{w}}(f_1(w))\ge \lceil 5k/12\rceil$ by~\cref{eq:mindegsec7.2}, combined 
	with~\eqref{eq:neighbors-w} and the facts that $|R_w|\le {k}/{4}+2$ and $k\ge 13$, 
	it follows that $X\ne \emptyset$.   For each $x\in X$, let $P_x$ be the subpath of $T$ joining $w$ and~$x$. 
	
	We now proceed to execute {\bf Step 2} of our strategy. 
	Let 
	\[X^*:=\{x\in X:P_x\not\subseteq P_{x'} \text{ for all }x'\in X\setminus \{x\}\}.  \]
	The fact that $X\ne \emptyset$ implies $X^*\ne \emptyset$. 
	Choose $x^*\in X^*$ and a subtree $C$ of $T_1$ such that $x^*\notin V(C)$ and
	$R_w\cup (X\setminus \{x^*\})\subseteq C$. Among all such choices of $x^*$ and $C$, choose these such that $|C\cap B_{uv}|$ is maximum.
	Note that $u\in C$ or $v\in C$. 
	Let $T_2$ be a maximal subtree of $T$ such that $C\cup \{w'\}\subseteq T_2$ and 
	there is an embedding $f_2:V(T_2)\rightarrow V(D')$ with $f_2(t)=f_1(t)$, for each  $t\in C$, and $f_2(w')=f_1(x^*)$. 
	Observe that $N^{\re{w}}_{T_1}(w)\cup \{w'\}\subseteq T_2$ and, since $x^*\in X^*$, also  $N^{\re{w}}(f_2(w))\subseteq f_2(T_2)$. 
	On the other hand, the maximality of $T_1$ implies the existence of  a vertex $z\in T_2$ 
	such that $\deg^{\re{z}}_{T_2}(z)<\deg^{\re{z}}_{T_1}(z)$, and so $N^{\re{z}}(f_2(z))\subseteq f_2(V(T_2))$. 
	We will assume that we chose $x^*$ and $T_2$ so that, if possible, $z$ is an out-vertex (while maintaining our condition that $|C\cap B_{uv}|$ is maximised as our first priority).

	\begin{claim}
		\label{claim:second-option}
		$B_{uv}\not\subseteq T_2$.
	\end{claim}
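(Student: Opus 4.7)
The plan is to argue by contradiction: assume $B_{uv}\subseteq T_2$. Since every $T$-neighbour of $u$ lies in $N^+[u]\subseteq B_{uv}\subseteq T_2$ and every $T$-neighbour of $v$ lies in $N^{\re{v}}[v]\subseteq B_{uv}\subseteq T_2$, while $w$ and $z$ each possess a $\pm$-neighbour outside $T_2$, we obtain $\{w,z\}\cap\{u,v\}=\emptyset$. In particular $f_2(u)$, $f_2(w)$ and $f_2(z)$ are three distinct vertices of $D'$, and $S:=f_2(V(T_2))$ has at most $k$ elements (since $T_2\subsetneq T$).

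Next I would record three lower bounds on the corresponding $\star$-neighbourhoods inside $S$. The inclusion $N^+(u)\subseteq T_2$ gives $|N^+(f_2(u))\cap S|\ge\Delta\ge\lfloor k/4\rfloor+3$. The containment $N^{\re{w}}(f_2(w))\subseteq f_2(V(T_2))$ established just before the claim, together with~\eqref{eq:mindegsec7.2}, yields $|N^{\re{w}}(f_2(w))\cap S|\ge\lceil 5k/12\rceil$, improving to $\lceil k/2\rceil$ when $w$ is an out-vertex. The maximality of $T_2$ likewise forces $N^{\re{z}}(f_2(z))\subseteq S$ and hence the analogous bound, with the preferential choice of $z$ as an out-vertex (whenever any out-vertex of $T_2$ has fewer $+$-neighbours in $T_2$ than in $T_1$) pushing this contribution up to $\lceil k/2\rceil$.

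I would then apply Lemma~\ref{lemma:k/4neighbors} to the triple $\{f_2(u),f_2(w),f_2(z)\}$ with signs $+,\re{w},\re{z}$ and the set $S$, forcing the sum of the three bounds above to be strictly less than $5k/4$. In the favourable sub-case where $w$ and $z$ are both out-vertices, this sum is already at least $\Delta+k\ge(\lfloor k/4\rfloor+3)+k>5k/4$, the desired contradiction.

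The main obstacle is the residual sub-case in which $w$ or $z$ is an in-vertex, where the naive sum $\lceil 5k/12\rceil+\lceil 5k/12\rceil+\Delta$ falls just short of $5k/4$. The plan there is either to swap the role of $u$ for $v$ when $v$ is an out-vertex of degree $\Delta_2\ge\lfloor k/4\rfloor+3$ (so $|N^+(f_2(v))\cap S|\ge\Delta_2$ plays the role of the $u$-term), or, when $\re{v}=-$ and the choice of $z$ as an out-vertex is infeasible, to exploit that infeasibility: every out-vertex of $T_2$ then satisfies $\deg^+_{T_2}=\deg^+_{T_1}$, so the attachment of $T_1\setminus T_2$ to $T_2$ happens only at in-vertices, which combined with the $\mathcal K_{2,s}$-free property furnishes a further vertex of $D'$ with large $\pm$-degree into $S$, lifting the total above $5k/4$ and once more contradicting Lemma~\ref{lemma:k/4neighbors}.
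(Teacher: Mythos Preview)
Your argument is clean and correct in the favourable sub-case where $w$ and $z$ are both out-vertices: then Lemma~\ref{lemma:k/4neighbors} applied to $f_2(u),f_2(w),f_2(z)$ with signs $+,+,+$ gives the contradiction $\Delta+\lceil k/2\rceil+\lceil k/2\rceil\ge 5k/4$. The difficulty is that your treatment of the residual sub-case does not go through.

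Concretely, when both $w$ and $z$ are in-vertices the naive sum is only $\Delta+\lceil 5k/12\rceil+\lceil 5k/12\rceil$, which can be as small as roughly $13k/12$, well below $5k/4$. Your first fix, replacing $u$ by $v$ when $v$ is an out-vertex, gains nothing: you trade $\Delta$ for $\Delta_2\le\Delta$. Your second fix, exploiting the infeasibility of an out-vertex $z$, is too vague to constitute a proof: knowing that every attachment of $T_1\setminus T_2$ to $T_2$ happens at an in-vertex does not by itself manufacture a vertex with large $\pm$-degree into $S$, and you have not said which vertex that would be or why. Even in the mixed case (exactly one of $w,z$ an in-vertex), your sum $\Delta+\lceil k/2\rceil+\lceil 5k/12\rceil$ falls short for large $k$; this can be repaired using condition~\ref{cor:2} of Lemma~\ref{cor:subdigraph} (which forces $|W|+|Z|\ge k$), but you have not invoked it, and no analogue of that repair is available when both are in-vertices.

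The paper's proof is genuinely more intricate here. It brings in \emph{both} $U:=f_2(N^+(u))$ and $V:=f_2(N^{\re{v}}(v))$, uses $U\cap V=\emptyset$, and first disposes of the case $\Delta+\Delta_2\ge 7k/12$ by direct inclusion--exclusion on $U\cup V\cup W\cup Z$. In the remaining case it shows via a pigeonhole and leaf-swapping argument that $W\cup Z$ must meet $\LL_{\{u,v\}}$, and then uses maximality of $T_1$ or $T_2$ to force a \emph{full} $D'$-neighbourhood $N^{\re{y}}(f_2(y))\subseteq S$ for some $y\in\{u,v\}$; this extra $\lceil k/2\rceil$ (or better) is what finally pushes the count above $k$. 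The argument also uses that we are in case~\ref{rem:2.1} and not case~\ref{rem:1} to control $|T\setminus B_{uv}|$. None of these ingredients appears in your sketch of the residual sub-case.
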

	\begin{myproof} 
		For the sake of a contradiction suppose that $B_{uv}\subseteq T_2$. Then $z\notin \{u,v\}$. 
		By the maximality of $T_2$ we have $N^{\re{z}}(f_2(z))\subseteq f_2(V(T_2))$. 
		Let 
		\begin{equation*}
			\label{eq:neighbor-leaf}
			U:=f_2(N^+(u)), \hspace{0.5em} V:=f_2(N^{\re{v}}(v)),\hspace{0.5em} W:=N^{\re{w}}(f_2(w)) \hspace{0.5em} \text{and} \hspace{0.5em} Z:=N^{\re{z}}(f_2(z)).
		\end{equation*}
		We have $U\cup V\cup W\cup Z\subseteq f_2(V(T_2))$ 
		and $|U\cap V|\le 1$ (it may happen that the path joining $u$ and $v$ in $T$ has length $2$). 
		Note that if $\Delta+\Delta_2\ge {7k}/{12}$, then because of~\cref{eq:mindegsec7.2},
		\[|T_2|\ge |U\cup V\cup W\cup Z|\ge \Delta+\Delta_2+2\left(\lceil 5k/12\rceil\right)-5(s-1)-1>k,\]
		a contradiction since $T_2\neq T$. So we can assume  that 
		\begin{equation}
			\label{eq:sum7k12}
			\Delta+\Delta_2<{7k}/{12}. 
		\end{equation}
		
		By~\cref{eq:B>3k/4}, we have $U\cap V=\emptyset$, and further, $|T\setminus B_{uv}|\le {k}/{4}+1$, 
		which implies $|\NL_{\{u,v\}}|\le {k}/{4}+1$. Thus,  
		\begin{equation}
			\label{eq:Luv} 
			|\LL_{\{u,v\}}|=\Delta+\Delta_2-|\NL_{\{u,v\}}|> {k}/{4}	\end{equation} because $\Delta\ge \Delta_2> k/4+1$. 
		Since $|W\cup Z|\ge 2\left(\lceil 5k/12\rceil\right)-(s-1)>{3k}/{4}$, it follows that 
		\begin{equation}\label{WuZ}
			\parbox{0.85\linewidth}{\centering\textit{$(W\cup Z)\cap \LL_{\{u,v\}}\ne \emptyset$. 
			}}
		\end{equation}
		
		Suppose now $Z\cap \LL_{\{u,v\}}\ne \emptyset$, and let $y\in \{u,v\}$ such that $Z\cap \LL_y\ne \emptyset$. 
		This means that there is a leaf $y'\in N^{\re{y}}(y)$ such that $f_2(y')\in N^{\re{z}}(f_2(z))$. 
		Since $\deg^{\re{z}}_{T_2}(z)<\deg^{\re{z}}_{T_1}(z)$, the maximality of $T_2$ implies that 
		$N^{\re{y}}(f_2(y))\subseteq f_2(V(T_2))$
		(otherwise we could extend the embedding $f$ to an embedding of $T_2\cup \{z'\}$ in $D'$). 
		By~\cref{eq:mindegsec7.2}, this contradicts Lemma~\ref{lemma:k/4neighbors}, used with vertices $f_2(y)$, $f_2(z)$ and $f_2(w)$.
		Hence $Z\cap \LL_{\{u,v\}}= \emptyset$, and thus, by~\cref{WuZ}, $W\cap \LL_{\{u,v\}}\ne \emptyset$.

		Let $y\in \{u,v\}$ such that $W\cap \LL_y\ne \emptyset$. Let $y'\in W\cap \LL_y$ and let $x:=\{u,v\}\setminus \{y\}$. 
		By the choice of $x^*\in X^*$ and $C$, observe that $f_2(y)=f_1(y)$ and $f_2(y')=f_1(y')$. 
		Let us now return to the tree $T_1$ 
		and the embedding $f_1:V(T_1)\to V(D')$. Since
		$y,y'\in T_1$, and by the maximality of $T_1$, 
		it  holds that 
		$N^{\re{w}}(f_1(w))\subseteq f_1(T_1)$ and $N^{\re{y}}(f_1(y))\subseteq f_1(T_1)$. 
		So, if $y$ is an out-vertex, we also have that $w$ is an out-vertex, and then~\cref{eq:mindegsec7.2} implies that
		\[|T_1|\ge |N^{\re{y}}(f_1(y))\cup N^{\re{w}}(f_1(w))\cup f_1(N^{\re{x}}(x))|\ge k+\Delta_2-3(s-1)>k,\]
		a contradiction since $T_1\neq T$. Therefore, $y$ is an in-vertex, and hence $y=v$. As we are not in case~\ref{rem:1}, and 
		because of~\cref{eq:sum7k12}, 
		it follows  that 
		$|T\setminus B_{uv}|< \Delta-\Delta_2<{k}/{12}$. In particular, $|\NL_{\{u,v\}}|< {k}/{12}$. 
		
		We return to $T_2$ and $f_2$. 
		Since $Z\cap \LL_{u,v}= \emptyset$, it follows that $|Z\cap (U\cup V)|\le |\NL_{\{u,v\}}|< {k}/{12}$. 
		The fact that $D'$ is \ksf implies that $|W\cap (U\cup V)|<{k}/{6}$ and $|W\cap Z|<{k}/{12}$. Recall that $U\cap V=\emptyset$. Therefore,
		\begin{align*}
			|T_2|&\ge |U\cup V\cup W\cup Z| \\ &\ge |U|+|V|+|W|+|Z|- |Z\cap (U\cup V)|-|W\cap (U\cup V)|-
			|W\cap Z|\\ &\ge \Delta+\Delta_2+2\left(\lceil 5k/12\rceil\right)-{4k}/{12}
			>k
		\end{align*}
		a contradiction since $T_2\neq T$. Thus, $B_{uv}\not\subseteq T_2$, as desired. 
	\end{myproof}
	
	We note that 
	\[
	\text{$z\in P_{uv}$ and there is a vertex $z'\in B_{uv}\cap (N^{\re{z}}(z)\setminus V(T_2))$.}
	\]
	Indeed, $\deg^{\re{t}}_{T_2}(t)=\deg^{\re{t}}_{T}(t)$ for each $t\in  T_2\setminus \{w,z\}$, as otherwise  $N^{\re{t}}(f_2(t))\subseteq f_2(T_2)$
	by the maximality of $T_2$, but it is easy to see that this contradicts Lemma~\ref{lemma:k/4neighbors} applied to vertices $f_2(t)$, $f_2(w)$ and $f_2(z)$.  
	So by Claim~\ref{claim:second-option}, 
	$z\in P_{uv}$, and there is a vertex $z'$ with the above properties.

	If $X\cap (T_1\setminus B_{uv})\ne \emptyset$, we could have chosen $x^*\in X^*\cap (T_1\setminus B_{uv})$, 	and so we would have $B_{uv}\subseteq C\subseteq T_2$, contradicting Claim~\ref{claim:second-option}. 
	Thus, we may assume that $X\cap (T_1\setminus B_{uv})=\emptyset$, or equivalently, that $X\subseteq B_{uv}$. 
	Recall that $C$ contains at least one of $u$ and $v$, and $C\subseteq T_2$. 
	Let $y\in \{u,v\}\setminus \{z\}$, so $N^{\re{y}}[y]\subseteq V(T_2)$. Set 
	\[\parbox{0.85\linewidth}{\centering\text{$Y:=f_2(N^{\re{y}}(y))$, \, $W:=N^{\re{w}}(f_2(w))$\, and \, $Z:=N^{\re{z}}(f_2(z))$.}}\] 
	Then
	\[|f_2(V(T_2))|\ge |Y\cup W\cup Z|\ge \Delta_2+2\left(\lceil 5k/12\rceil\right)-3(s-1)>{5k}/{6}.\]
	In particular, $|T_2|>{3k}/{4}$ and therefore
	\begin{equation}\label{zuv}
		\text{$z\in \{u,v\}$. }
	\end{equation} 
	Note that the vertices $y$ and $z$ are the ones identified according to {\bf Step 3} of our strategy. Now, we claim that 
	\begin{equation}\label{zinvertex}
		\text{$z$ is an in-vertex.}
	\end{equation}
	Suppose otherwise. Then, by~\cref{eq:mindegsec7.2},  $\deg^+(f_2(z))\ge \lceil k/2\rceil$. We will argue now that $\deg^{\re{w}}(f_2(w))+\deg^+(f_2(z))\ge k$. Indeed, 
	if $w$ is an in-vertex, this follows from~\ref{cor:2} of Lemma~\ref{cor:subdigraph}, 
	and if $w$ is an out-vertex, then it follows from~\cref{eq:mindegsec7.2}. Thus,   
	\[|f_2(V(T_2))|\ge |Y\cup W\cup Z|\ge \Delta_2+k-3(s-1)>k,\]
	a contradiction since $T_2\neq T$. This proves~\cref{zinvertex}.
	
	We are now ready to proceed with {\bf Step 4}, the final step in our strategy. The primary goal here is to establish a lower bound on the pairwise intersections of the sets $Y$, $W$, and $Z$, which in turn will allow us to establish a lower bound on $|\im f_2|$. 
	
	By~\cref{zuv} and~\cref{zinvertex}, $z=v$ and $y=u$. Further, since we chose $x^*$ and $T_2$ so that $z$ was an out-vertex if possible, 
	we know that $X\cap N^+(u)=\emptyset$,  so 
	\begin{equation}\label{second-to-last-eq}
		\text{$W\cap Y=\emptyset$.}
	\end{equation}

	We claim that 
	\begin{equation}\label{last-eq}
		\text{$|Y\cap Z|\le 1$.}
	\end{equation}
	Indeed,
	suppose otherwise, and let $y'\in N^+(y)$ not on the path joining $u$ and $v$, with $f_2(y')\in Y\cap Z$. As $f_2(w)$ and $f_2(z)$ each send at least $\lceil 5k/12\rceil$ arcs to $\im f_2$ (a set of at most $k$ vertices), 
	Lemma~\ref{lemma:k/4neighbors} implies that $|N_{D'}^+(f_2(u))\cap \im f_2|<5k/12$. Therefore, by~\cref{eq:mindegsec7.2}, there exists a vertex $a\in N_{D'}^+(f_2(u))\setminus f_2(V(T_2))$. Also,
	recall that $W\subseteq f_2(B_{uv}\cup R_w)$, because $X\subseteq B_{uv}$ and due to the definition of $T_2$ and $f_2$. 
	Let $C'$ be the  connected component of $T_2\setminus \{y'\}$ that contains $w$, and let $T_3$ be a maximal subtree of $T$
	such that $C'\cup \{z',y'\}\subseteq T_3$ and there is an embedding, say $f_3$, of $T_3$ in $D'$ with $f_3(t)=f_2(t)$, for each
	$t\in C'$, $f_3(z')=f_2(y')$ and $f_3(y')=a$. Note that 
	$|T_3\cap B_{uv}|\ge |T_2\cap B_{uv}|+1$, which contradicts the choice of $C\subseteq T_2$. This proves~\cref{last-eq}.
	
	By~\cref{second-to-last-eq} and~\cref{last-eq},  and given that $|W\cap Z|\le s-1$ and $\Delta_2\ge \lfloor k/4\rfloor +3$, we have 
	\[|f_2(V(T_2))|\ge |Y\cup W\cup Z|\ge \Delta_2+2\left(\lceil 5k/12\rceil\right)-(s-1)-1>k,\]
	a contradiction that concludes the proof. 
\end{proof}

\small\bibliographystyle{abbrvnat} \bibliography{biblio}

\end{document}